\newcommand{\mfrak}{\mathfrak{m}}
\newcommand{\F}{\mathbb{F}}
\newcommand{\rk}{\operatorname{rk}}
\newcommand{\pexp}{\operatorname{Exp}}
\newcommand{\Sa}[1]{\ensuremath{\mathscr{#1}}}
\newcommand{\Sh}[1]{\ensuremath{\mathscr{#1}^{\mathrm{Sh}}}}
\newcommand{\qalg}{\Q^\mathrm{alg}_p}
\newcommand{\Cal}[1]{\mathcal #1}
\def \dia {\diamondsuit}
\def \st {\operatorname{st}}
\def \Q{\mathbb{Q}}
\def \N{\mathbb{N}}
\def \sM{\mathscr{M}}
\def \sG{\mathscr{G}}
\def \sR{\mathscr{R}}
\def \sK{\mathscr{K}}
\def \sI{\mathscr{I}}
\def \sQ{\mathscr{Q}}
\def \R{\mathbb{R}}
\def \K{\mathbb{K}}
\def \Z{\mathbb{Z}}
\DeclareMathOperator{\dprk}{dp-rk}
\newcommand{\myscriptarrow}[1][1cm]{{%
    \hbox{\rule[\scriptratio\dimexpr\fontdimen22\textfont2-.2pt\relax]
               {\scriptratio\dimexpr#1\relax}{\scriptratio\dimexpr.4pt\relax}}%
   \mkern-4mu\hbox{\let\f@size\sf@size\usefont{U}{lasy}{m}{n}\symbol{41}}}}
\newcommand{\nip}{\mathrm{NIP}}
\newtheorem{Th}{Theorem}[section]
\newtheorem{Thm}[Th]{Theorem}
\newtheorem{Cor}[Th]{Corollary}
\newtheorem{Prop}[Th]{Proposition}
\newtheorem{Lem}[Th]{Lemma}
\newtheorem{Fact}[Th]{Fact}
\newtheorem{Conj}[Th]{Conjecture}
\newtheorem{Ques}{Question}
\title{dp and other minimalities}
\author{Pierre Simon}
\address{Department of Mathematics, UC Berkeley, Berkeley CA 94720}
\email{pierre.simon@berkeley.edu}
\thanks{PS was partially supported by NSF (grants no. 1665491 and 1848562)}
\author{Erik Walsberg}
\address{Department of Mathematics\\University of Illinois at Urbana-Champaign\\1409 West Green Street\\Urbana, IL 61801}
\email{erikw@illinois.edu}
\urladdr{http://www.math.illinois.edu/\textasciitilde erikw}
\date{\today}
\begin{document}
\begin{abstract}
A first order expansion of $(\R,+,<)$ is dp-minimal if and only if it is o-minimal.
We prove analogous results for algebraic closures of finite fields, $p$-adic fields, ordered abelian groups with only finitely many convex subgroups (in particular archimedean ordered abelian groups), and abelian groups equipped with archimedean cyclic group orders.
The latter allows us to describe unary definable sets in dp-minimal expansions of $(\Z,+,S)$, where $S$ is a cyclic group order.
Along the way we describe unary definable sets in dp-minimal expansions of ordered abelian groups.
In the last section we give a canonical correspondence between dp-minimal expansions of $(\Q,+,<)$ and o-minimal expansions $\Sa R$ of $(\R,+,<)$ such that $(\Sa R,\Q)$ is a ``dense pair".
\end{abstract}
\maketitle

\noindent In model theory one typically approaches a first order structure $\mathscr{M}$ by showing $\mathscr{M}$ satisfies some form of model completeness which yields a description of definable sets highly specific to that structure.
This description allows one to understand the abstract classification-theoretic properties of $\mathscr{M}$ and situate $\mathscr{M}$ in the landscape of first order structures.
Converse implications, situations in which abstract classification-theoretic properties yield precise descriptions of definable sets, are rare.
One is Fact~\ref{fact:pierre}, a consequence of \cite[Corollary 3.7]{Simon-dp}.

\begin{Fact}
\label{fact:pierre}
An expansion of $(\mathbb{R},+,<)$ is dp-minimal if and only if it is o-minimal.
\end{Fact}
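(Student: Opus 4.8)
The plan is to prove the two implications separately. The direction \emph{o-minimal $\Rightarrow$ dp-minimal} is soft: an o-minimal structure is weakly o-minimal, and it is well known that weakly o-minimal theories are dp-minimal --- a definable subset of the home sort is a finite union of boundedly many convex sets, so its ``cut data'' cannot be refined independently by two mutually indiscernible parameter sequences, which is precisely what an inp-pattern of depth two in one free variable would require. (Equivalently, cell decomposition shows the home sort of an o-minimal structure has dp-rank one.) The content is the converse.

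So let $\Sa R$ be a dp-minimal expansion of $(\R,+,<)$. I will show that every definable $X \subseteq \R$ is a finite Boolean combination of points and intervals; for this it is enough to show that its topological boundary $\partial X$ --- in the order topology of $\R$, which is the Euclidean topology --- is finite, since then $\R \setminus \partial X$ is a finite union of open intervals on each of which $X$ is relatively clopen, hence all-or-nothing. (We may allow parameters in $X$, as dp-minimality is insensitive to naming constants.) The substantive external input, and where \cite[Corollary 3.7]{Simon-dp} enters, is the tame-topology theory of dp-minimal ordered structures: in a dp-minimal structure carrying a definable dense linear order without endpoints, every definable subset of the line has \emph{nowhere dense} boundary. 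Thus $\partial X$ is a definable, closed, nowhere dense subset of $\R$, and it remains to prove that no such set can be infinite.

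Suppose then that $C \subseteq \R$ is definable, closed, nowhere dense and infinite; this is where Dedekind-completeness of $\R$ is used. As $C$ is closed and nowhere dense, $\R \setminus C$ is a dense open set, a disjoint union of open ``gaps''; were there only finitely many gaps then $C$ would be a finite union of closed intervals, rays and points, hence, being nowhere dense, finite. So there are infinitely many gaps, and after discarding the at most two unbounded ones, infinitely many bounded gaps $(l_i, r_i)$ with distinct $l_i \in C$ and with $r_i \in C$ (the endpoints belong to $C$ by completeness and closedness). Consider two formulas in the single variable $x$: $\phi_1(x;a)$ saying ``$a$ is the largest element of $C$ strictly below $x$'', and $\phi_2(x;c,c')$ saying ``writing $l$ for the largest element of $C$ below $x$ and $r$ for the least above $x$, we have $c\,(r-l) < x - l < c'\,(r-l)$'', i.e.\ $x$ occupies normalized position strictly between $c$ and $c'$ within its gap. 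Feed in $a := l_i$ in the first row and pairwise disjoint subintervals $(c_j,c_j') \subseteq (0,1)$ in the second. Each row is $2$-inconsistent --- a given $x$ has at most one largest element of $C$ below it, and at most one normalized position --- while for every $i$ and $j$ the point $x := l_i + (c_j+c_j')(r_i - l_i)/2$ lies in $(l_i,r_i)$ at normalized position $(c_j+c_j')/2 \in (c_j,c_j')$, so $\phi_1(x;l_i) \wedge \phi_2(x;c_j,c_j')$ is satisfiable. This is an inp-pattern of depth two in one free variable, contradicting dp-minimality. Hence $\partial X$ is finite and $\Sa R$ is o-minimal.

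The weight of the argument sits almost entirely in the cited input that $\partial X$ is nowhere dense: the inp-pattern above is routine, and it is sharp only because $\R$ is complete --- it breaks, correctly, for dp-minimal ordered structures in general, so completeness is genuinely exploited --- whereas nowhere-density itself, equivalently the exclusion of dense-and-codense definable subsets of the line, which in turn rests on the monotonicity theory for definable unary functions in dp-minimal ordered structures, is the real theorem, and in a self-contained treatment that is where the difficulty would lie.
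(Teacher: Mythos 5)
The paper does not actually prove this Fact; it quotes it from \cite{Simon-dp}, and later notes it follows from Simon's theorem that a dp-minimal, definably connected expansion of an ordered group is o-minimal. Your overall strategy (the boundary of a definable unary set is nowhere dense; an infinite nowhere dense definable set contradicts dp-minimality; completeness converts a finite boundary into a finite union of intervals) is the intended route, but your final step has a genuine gap. The formula $\phi_2(x;c,c')$ asserting $c\,(r-l) < x - l < c'\,(r-l)$ is not a formula in the language of an expansion of $(\R,+,<)$: it multiplies the \emph{parameter} $c$ by the definable quantity $r-l$, and multiplication of two non-constant reals is not available. Multiplication by a fixed rational is definable (it is iterated addition), but then your second row has only finitely many columns, which is not an inp-pattern of depth two; and the unnormalized variant ``$x - l(x) \in (s_j,t_j)$'' only works if infinitely many gaps of $C$ have length bounded below by a positive constant, which fails for bounded sets such as $C = \{0\} \cup \{1/n : n \geq 1\}$, whose gap lengths shrink to $0$. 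So the array is not witnessed by formulas of the structure, and the bounded case is not covered. The statement you need at this point is exactly Fact~\ref{fact:goodrick-nwd} of the paper (Goodrick): in a dp-minimal expansion of a dense ordered abelian group, every nowhere dense definable unary set is finite. Citing that (or Simon's theorem that every infinite definable unary set is the union of an open set and a finite set, applied to the closed nowhere dense set $\partial X$, which can contain no nonempty open set) closes the argument with no hand-built array and no appeal to completeness at this stage.

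A secondary inaccuracy: you attribute ``definable unary subsets of the line have nowhere dense boundary'' to dp-minimal structures carrying a definable dense linear order. Without the group operation this is false --- the paper itself recalls that the expansion of a dense linear order by \emph{all} unary predicates is dp-minimal --- so the additive structure is already essential in the nowhere-density input, not only in the final completeness step as your closing paragraph suggests.
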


\noindent Another is Fact~\ref{fact:Z} \cite[Proposition 6.6]{ADHMS}.

\begin{Fact}
\label{fact:Z}
An expansion $\mathscr{Z}$ of $(\mathbb{Z},+,<)$ is dp-minimal if and only if every $\mathscr{Z}$-definable subset of every $\mathbb{Z}^n$ is already $(\mathbb{Z},+,<)$-definable.
\end{Fact}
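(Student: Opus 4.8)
One direction is immediate: if every $\mathscr{Z}$-definable set is already $(\Z,+,<)$-definable, then $\mathscr{Z}$ and $(\Z,+,<)$ have exactly the same definable sets, and $(\Z,+,<)$ — Presburger arithmetic — is dp-minimal, so $\mathscr{Z}$ is dp-minimal. For the converse, assume $\mathscr{Z}$ is dp-minimal and let $D$ be $\mathscr{Z}$-definable; we must show $D$ is $(\Z,+,<)$-definable. The first reduction is to the case $D\subseteq\Z$ is unary. Passing from unary to all arities should be a fibering argument: the fibres $D_a=\{b:(a,b)\in D\}$ are each $(\Z,+,<)$-definable, dp-minimality forces a uniform bound on the ``Presburger complexity'' of these fibres (this is already an essential use of the hypothesis), and the resulting $\mathscr{Z}$-definable map $a\mapsto(\text{a Presburger code for }D_a)$ lands in a Presburger imaginary sort, whence $D$ is $(\Z,+,<)$-definable.

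So let $D\subseteq\Z$ be $\mathscr{Z}$-definable and suppose toward a contradiction that $D$ is not $(\Z,+,<)$-definable. By Presburger quantifier elimination this means $D$ is not eventually periodic in at least one direction, say towards $+\infty$; equivalently the one-sided word $w\in 2^{\N}$ with $w(n)=\mathbf{1}_D(n)$ is not eventually periodic, so by the Morse--Hedlund theorem its factor complexity is unbounded. The plan is to convert this into an ict-pattern of depth $2$, contradicting dp-minimality. In the favorable case — when non-periodicity manifests as arbitrarily long gaps in $D\cap\N$, e.g.\ $D\cap\N=\{2^{k}:k\in\N\}$ — one builds: a first row of pairwise disjoint (hence $2$-inconsistent) windows $J_{i}=[d_{i}-e_{m-1},d_{i}+1)$ with $d_{i}\in D$ chosen very spread out; and a second row of translates $D-e_{l}$ with $0<e_{0}<e_{1}<\cdots$ chosen sufficiently lacunary that no integer lies in more than a fixed number $k$ of them (sparsity of $D$ is what makes such $e_{l}$ available). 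Cross-consistency is immediate, $d_{i}-e_{l}\in J_{i}\cap(D-e_{l})$; running over all finite widths and applying compactness produces a genuine ict-pattern of depth $2$ in $\mathscr{Z}$ — contradiction, so $D$ is $(\Z,+,<)$-definable.

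\textbf{Main obstacle.}
The delicate point is making the last paragraph work for \emph{every} non-eventually-periodic $D$, not just the sparse ones: when $D$ has positive density the translates $D-e_{l}$ overlap too much for any uniform inconsistency bound, and one must instead exploit the unbounded factor complexity directly — locating inside long windows of $w$ many pairwise-distinct length-$\ell$ configurations and using these, together with the order, to assemble a depth-$2$ pattern with a uniform inconsistency bound. Carrying this out uniformly across all shapes and densities of $D$ is the real content (and is the substance of \cite[Proposition 6.6]{ADHMS}). A more structural alternative is to pass to a saturated $\mathscr{Z}^{*}\models\operatorname{Th}(\mathscr{Z})$, let $I\le\mathscr{Z}^{*}$ be the convex hull of $\Z$, observe that the quotient $\mathscr{Z}^{*}/I$ is a densely ordered divisible abelian group on which the induced structure still has dp-rank $\le 1$, apply the densely-ordered analogue of Fact~\ref{fact:pierre} to describe the image of $D^{*}$, and feed this back — but this only controls $D^{*}$ modulo $I$, and analysing $D^{*}$ inside a single coset of $I$ is again a $\Z$-like problem, so this route returns to essentially the same difficulty.
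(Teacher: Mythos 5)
A preliminary remark on scope: the paper does not prove this statement at all — it quotes it as \cite[Proposition 6.6]{ADHMS} — so there is no in-paper argument to compare yours against; your proposal has to stand on its own as a proof.

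It does not yet. The easy direction is fine, but the hard direction has a genuine gap that you yourself flag and then do not close: the ict-pattern construction is only carried out for sparse $D$ (sets with arbitrarily long gaps), and the positive-density case is deferred to ``exploiting the unbounded factor complexity directly,'' which is precisely the content of the statement. Concretely, take $D=\{n\in\Z:\lfloor (n+1)\alpha\rfloor-\lfloor n\alpha\rfloor=1\}$ for irrational $\alpha$ (a Sturmian set). It is not eventually periodic, yet any two translates $D-e$ and $D-e'$ meet in a set of positive lower density, so the row of formulas $(x\in D-e_l)_l$ admits no uniform inconsistency bound for \emph{any} choice of the $e_l$, and intersecting with the disjoint windows from the first row destroys the cross-consistency you need. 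A different mechanism is required here — in the literature one bounds the number of alternations of a definable set along an indiscernible sequence, or uses the Morse--Hedlund $(n+1)$-complexity threshold far more delicately — and your sketch does not supply it.

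There is a second, smaller gap in the reduction from arity $n$ to arity $1$. Uniform bounds on the periods and thresholds of the fibres $D_a$ do follow from the unary case in a saturated model plus compactness, but your ``Presburger code'' step quietly assumes that the threshold functions $a\mapsto(\text{endpoints of the exceptional intervals of }D_a)$ are themselves $(\Z,+,<)$-definable. Their graphs are definable subsets of $\Z^n$, so this cannot be fed into an induction on arity without a separate argument (e.g.\ treating definable finite-to-one functions first); as written the step is circular. The proposal is an honest and well-oriented plan, but both hard steps are left open.
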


\noindent
The initial motivation for this work was to prove an analogous result over $(\Q,+,<)$.
We prove several results of this form and give a general framework for such results.

\section{Dp-rank}
\noindent Our reference is \cite[Chapter 4]{Simon-Book}.
Let $\kappa$ be a cardinal and $T$ be a complete theory.
A family $(I_t : t \in X)$ of sequences of elements of a highly saturated $\sM \models T$ is \textit{mutually indiscernible} over a small set $A$ of parameters if each $I_t$ is indiscernible over $A \cup (I_s : s \in X \setminus \{t\} )$.
The dp-rank $\dprk(T)$ of $T$ is less than $\kappa$ if for every highly saturated $\sM \models T$, small set $A$ of parameters, family $(I_t : t < \kappa)$ of mutually indiscernible sequences over $A$, and $b \in \sM$ there is $\lambda < \kappa$ such that $I_\lambda$ is indiscernible over $Ab$.
The dp-rank of $T$ is $\kappa$ if $\dprk(T) < \kappa^+$ but $\dprk(T)$ is not less than $\kappa$.
The dp-rank of $T$ is $\infty$ if we do not have $\dprk(T) < \kappa$ for any cardinal $\kappa$.
It is known that $\dprk(T) < \infty$ if and only if $\dprk(T) < |T|^{+}$ if and only if $T$ is $\mathrm{NIP}$.
It is easy to see $\dprk(T) = 0$ if and only if $\mathscr{M}$ is finite.
We say $T$ is \textbf{dp-minimal} when $\dprk(T) \leq 1$.
A structure is dp-minimal if its theory is dp-minimal.
See \cite{DGl-dp, Simon-dp, AldE} for examples of dp-minimal structures.
Dp-minimal fields are classfied in \cite{Johnson}.

\section{Set up}

\subsection{Conventions}
\noindent
All languages and structures are first order and ``definable" without modification means ``first order definable, possibly with parameters".
Given languages $L \subseteq L'$ and an $L'$-structure $\sM$ we let $\sM|L$ be the $L$-reduct of $\sM$.
We let $|x| = n$ be the length of a tuple $(x_1,\ldots,x_n)$ of variables.
Throughout $n$ is a natural number and $m,k,i,j$ are integers.

\subsection{External definibility}
\noindent 
Suppose $\sM$ is a structure and $\mathscr{N}$ is an $|M|^{+}$-saturated elementary extension of $\sM$.
A subset of $M^k$ is \textbf{externally definable} if is of the form $X \cap M^k$ for $\mathscr{N}$-definable $X \subseteq N^k$.
The \textbf{Shelah expansion} $\mathscr{M}^{{\mathrm{Sh}}}$ of $\mathscr{M}$ is the expansion of $\mathscr{M}$ by a $k$-ary predicate defining $X \cap M^k$ for every $\mathscr{N}$-definable $X \subseteq N^k$.
Up to interdefinibility, this construction does not depend on choice of $\mathscr{N}$.
(Two structures on a common domain $M$ are \textbf{interdefinable} if they define the same subsets of all $M^n$).
Observe that if $\sM$ expands a linear order then every convex set is externally definable.
The following theorem of Shelah is deep~\cite{Shelah-strongly}.
(See \cite{ChSi-externally} for another proof.)

\begin{Fact}\label{fact:shelah}
If $\mathscr{M}$ is $\mathrm{NIP}$ then every $\mathscr{M}^{\mathrm{Sh}}$-definable subset of every $M^k$ is externally definable.
It follows that $\dprk(\mathscr{M}^{\mathrm{Sh}}) = \dprk(\mathscr{M})$ for any $\mathscr{M}$ and consequently $\mathscr{M}^{\mathrm{Sh}}$ is $\mathrm{NIP}$ when $\mathscr{M}$ is $\mathrm{NIP}$ and $\mathscr{M}^{\mathrm{Sh}}$ is dp-minimal when $\mathscr{M}$ is dp-minimal.
It follows that if $\sM$ expands a linear order then any expansion of $\sM$ by any family of convex sets is $\nip$ when $\sM$ is $\nip$ and dp-minimal when $\sM$ is dp-minimal.
\end{Fact}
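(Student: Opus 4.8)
The first assertion is Shelah's theorem on externally definable sets in $\nip$ structures, and this is the deep input; rather than reprove it I would cite \cite{Shelah-strongly}, or the honest-definitions proof of \cite{ChSi-externally}. The shape of that proof is worth recalling for orientation: the externally definable subsets of the spaces $M^k$ visibly form a Boolean algebra closed under coordinate permutations and under taking fibres, so everything comes down to closure under a coordinate projection $\pi\colon M^{k+1}\to M^{k}$. Given $\phi(x,x';b)$ with $b$ in the $|M|^{+}$-saturated $\mathscr N\succ\mathscr M$, $\nip$ supplies --- via the honest-definitions lemma, whose proof runs through VC theory and the $(p,q)$-theorem, and which is the genuine obstacle of the whole statement --- a formula $\psi(x,x';z)$ depending only on $\phi$ together with parameters $d$ in a larger saturated model such that $\psi(\cdot;d)$ agrees with $\phi(\cdot;b)$ on $M^{k+1}$ while implying $\phi(\cdot;b)$ everywhere; one then checks that $\exists x'\,\psi(x,x';d)$ cuts out precisely $\pi(\phi(M^{k+1};b))$ on $M^{k}$. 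Everything downstream is bookkeeping, and for that bookkeeping I would extract the uniform form: for each $L^{\mathrm{Sh}}$-formula $\phi(x;y)$ there is an $L$-formula $\chi(x;y,z)$ such that every instance $\phi(M^{|x|};c)$, $c\in M^{|y|}$, is the trace on $M^{|x|}$ of $\chi(\mathscr N^{|x|};c,d)$ for a suitable $d\in N^{|z|}$.

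Granting this, I would deduce $\dprk(\mathscr M^{\mathrm{Sh}})=\dprk(\mathscr M)$. Since $\mathscr M$ is a reduct of $\mathscr M^{\mathrm{Sh}}$, the inequality $\dprk(\mathscr M^{\mathrm{Sh}})\geq\dprk(\mathscr M)$ is automatic, dp-rank being monotone under reducts \cite[Ch.~4]{Simon-Book}; in particular both sides are $\infty$ when $\mathscr M$ has IP, so I may assume $\mathscr M$ is $\nip$. For the reverse inequality the plan is to translate ict-patterns --- arrays of formulas $\phi_t(x;y_t)$ and parameters $(b^t_i)_{i<\omega}$ every path type of which is consistent, the depth of such patterns being governed by the dp-rank \cite[Ch.~4]{Simon-Book}. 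I would start with an ict-pattern of depth $\kappa$ for $\mathrm{Th}(\mathscr M^{\mathrm{Sh}})$, realised in a sufficiently saturated model which, by standard facts \cite[Ch.~3]{Simon-Book}, may be taken to be the Shelah expansion of a saturated $\mathscr N_{0}\equiv\mathscr M$ relative to some $\mathscr N_{1}\succ\mathscr N_{0}$. Applying the uniform form of Shelah's theorem to each $\phi_t$ yields $L$-formulas $\chi_t(x;y_t,z_t)$ and tuples $d_{t,i}\in N_1$ with $\phi_t(\cdot;b^t_i)$ equal to the trace of $\chi_t(\cdot;b^t_i,d_{t,i})$; then row $t$ with formula $\chi_t$ and array $((b^t_i,d_{t,i}))_{i<\omega}$ is an ict-pattern of depth $\kappa$ for $\mathrm{Th}(\mathscr M)$, because an element of $N_0$ realising a path type of the $\phi_t$-pattern still satisfies the corresponding $\chi_t$-instances, and their negations, in $\mathscr N_1$. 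Hence $\dprk(\mathscr M)\geq\dprk(\mathscr M^{\mathrm{Sh}})$. Beyond Shelah's theorem itself the only delicate point is exactly this bookkeeping --- that saturated models of $\mathrm{Th}(\mathscr M^{\mathrm{Sh}})$ arise as Shelah expansions and that the external definitions are uniform --- which I would simply cite.

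The remaining assertions are then formal. That $\mathscr M^{\mathrm{Sh}}$ is $\nip$ when $\mathscr M$ is, and dp-minimal when $\mathscr M$ is, follows at once from $\dprk(\mathscr M^{\mathrm{Sh}})=\dprk(\mathscr M)$ and the equivalences recalled above: $\dprk(T)<\infty$ exactly when $T$ is $\nip$, and $T$ is dp-minimal exactly when $\dprk(T)\leq 1$. Finally, if $\mathscr M$ expands a linear order then every convex subset of $M$ is externally definable, as was observed just above, so any expansion of $\mathscr M$ by a family of convex sets is interdefinable with a reduct of $\mathscr M^{\mathrm{Sh}}$; by monotonicity of dp-rank under reducts its dp-rank is at most $\dprk(\mathscr M^{\mathrm{Sh}})=\dprk(\mathscr M)$, and hence it is $\nip$, respectively dp-minimal, whenever $\mathscr M$ is.
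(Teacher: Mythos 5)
Your proposal is correct and matches the paper's treatment: the paper states this as a Fact and offers no proof beyond citing Shelah \cite{Shelah-strongly} and the honest-definitions argument of \cite{ChSi-externally}, which is exactly the deep input you isolate and cite. The remaining deductions you supply (transfer of ict-patterns via uniform external definitions to get $\dprk(\mathscr M^{\mathrm{Sh}})=\dprk(\mathscr M)$, and the observation that an expansion by convex sets is a reduct of $\mathscr M^{\mathrm{Sh}}$) are the standard bookkeeping the paper implicitly relies on, and they are carried out correctly.
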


\subsection{Weak minimality}
\noindent Let $L \subseteq L^{\dia}$ be languages, $T$ be a complete consistent $L$-theory, $T^\dia$ be a complete consistent $L^\dia$-theory extending $T$, $\sM^\dia \models T^\dia$, and $\sM = \sM^\dia | L$.
We say that $\mathscr{M}^\dia$ is \textbf{$\pmb{\mathscr{M}}$-minimal} if every $\mathscr{M}^\dia$-definable subset of $M$ is $\mathscr{M}$-definable.
Furthermore $T^\dia$ is \textbf{$\pmb{T}$-minimal} if for every $L^\dia$-formula $\varphi(x,y)$, $|x| = 1$ there is an $L$-formula $\psi(x,z)$ such that for every $\mathscr{O}^\dia \models T^\dia$ and $b \in O^{|y|}$ there is $c \in O^{|z|}$ such that
$$ \{ a \in O : \mathscr{O}^\dia \models \varphi(a,b) \} = \{ a \in O : \mathscr{O}^\dia \models \psi(a,c) \}.$$
\noindent
We say that $\mathscr{M}^\dia$ is \textbf{weakly $\pmb{\mathscr{M}}$-minimal} if every $\mathscr{M}^\dia$-definable subset of $M$ is externally definable in $ \mathscr{M}$.
We say that $T^\dia$ is \textbf{weakly $\pmb{T}$-minimal} if for every $L^\dia$-formula $\varphi(x,y)$ with $|x| = 1$, there is an $L$-formula $\psi(x,z)$ such that for every $\mathscr{O}^\dia \models T^\dia$, sufficiently saturated $\mathscr{P} \succ \mathscr{O}^\dia|L$, and $a \in O^{|y|}$ there is a $b \in P^{|z|}$ such that 
$$ \{ c \in O : \mathscr{O}^\dia \models \varphi(c,a) \} = \{ c \in O : \mathscr{P} \models \psi(c,b) \}.$$

\noindent
If $L$ is the empty language then $\mathscr{M}^\dia$ is $\mathscr{M}$-minimal if and only if $\mathscr{M}$ is minimal and $T^\dia$ is $T$-minimal if and only if $T^\dia$ is strongly minimal.
If $T$ is the theory of a dense linear order then $\mathscr{M}^\dia$ is $\mathscr{M}$-minimal if and only if $\mathscr{M}^\dia$ is o-minimal.
If $T$ is the theory of a $C$-relation, then $T^\dia$ is $T$-minimal if and only if $T^\dia$ is $C$-minimal.
If $T$ is the theory of $\Q_p$ then $T^\dia$ is $T$-minimal if and only if $T^\dia$ is $P$-minimal.
(See \cite{HaMa-Cell, HaMa-A} for an account of $C$-, $P$-minimality, respectively.)
If $\mathscr{M}$ is a dense linear order then $\Sa M^\dia$ is weakly $\Sa M$-minimal if and only if $\Sa M^\dia$ is weakly o-minimal and $T^\dia$ is weakly $T$-minimal if and only if $T^\dia$ is weakly o-minimal.
(An expansion of a dense linear order is \textbf{weakly o-minimal} if every definable unary set is a finite union of convex sets and a theory is weakly o-minimal if every model is.)
\medskip

\noindent
It is easy to see that Proposition~\ref{prop:weak-to-nip} is true.
We leave this to the reader.

\begin{Prop}
\label{prop:weak-to-nip}
Suppose $T^\dia$ is weakly $T$-minimal.
Then $T^\dia$ is $\nip$ when $T$ is $\nip$ and $T^\dia$ is dp-minimal when $T$ is dp-minimal.
\end{Prop}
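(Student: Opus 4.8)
The plan is to reduce both implications to a single transport observation: any ``one-variable'' witness to $\mathrm{IP}$ or to dp-rank $\geq 2$ in $T^\dia$ can be copied, using weak $T$-minimality, into a witness of the same form for $T$. I will use two standard facts: a complete theory is $\nip$ if and only if every formula $\varphi(x,y)$ with $|x|=1$ is $\nip$; and, because in the definition of dp-rank above $b$ ranges over single elements, a complete theory is dp-minimal if and only if there is no ict-pattern of depth $2$ in a single variable $x$ with $|x|=1$ (see \cite{Simon-Book}). So suppose $T^\dia$ is not $\nip$ (resp.\ not dp-minimal). Fixing a sufficiently saturated $\mathscr{O}^\dia \models T^\dia$, we obtain an $L^\dia$-formula $\varphi(x,y)$ with $|x|=1$ (resp.\ two such formulas $\varphi_1(x,y_1),\varphi_2(x,y_2)$), parameter tuples from $O$, and singletons $a \in O$ realizing the corresponding alternation (resp.\ pattern) types; these $a$ lie in $O$ precisely because $|x|=1$.

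Now pick a sufficiently saturated $\mathscr{P}\succ\mathscr{O}^\dia|L$; then $\mathscr{P}\models T$ as $T^\dia$ extends $T$. For each $L^\dia$-formula $\varphi(x,y)$ occurring above, weak $T$-minimality yields an $L$-formula $\phi(x,z)$, independent of the parameters, such that for every $b\in O^{|y|}$ there is $c_b\in P^{|z|}$ with $\{a\in O:\mathscr{O}^\dia\models\varphi(a,b)\}=\{a\in O:\mathscr{P}\models\phi(a,c_b)\}$. Since every witnessing $a$ lies in $O\subseteq P$, the value of $\varphi(a,b)$ in $\mathscr{O}^\dia$ agrees with that of $\phi(a,c_b)$ in $\mathscr{P}$; replacing each formula $\varphi$ (resp.\ $\varphi_t$) by the corresponding $\phi$ (resp.\ $\phi_t$) and each parameter $b$ by $c_b$ produces, inside $\mathscr{P}$, a configuration of exactly the same combinatorial shape --- an $\mathrm{IP}$ witness for $\phi$, resp.\ a depth-$2$ ict-pattern for $\phi_1,\phi_2$. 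Hence $\phi$ has $\mathrm{IP}$ relative to $T$ (resp.\ $T$ has a depth-$2$ ict-pattern in one variable), contradicting that $T$ is $\nip$ (resp.\ dp-minimal). This establishes the proposition.

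I expect no serious difficulty here --- this is the ``easy to see'' of the text --- but two features of the setup carry the argument and deserve to be highlighted. The uniformity built into the definition of weak $T$-minimality is what lets a single $L$-formula $\phi$ replace $\varphi$ across all parameter tuples simultaneously; without it the copied pattern would not be built from one formula per row. And the restriction $|x|=1$ is what keeps the singletons over which the pattern is evaluated inside the small model $O$, so that they survive as witnesses after passing to $\mathscr{P}$ --- this is exactly the shape of data that weak $T$-minimality is designed to handle. Everything else is routine saturation bookkeeping: choose $\mathscr{O}^\dia$ saturated enough to house the whole configuration and $\mathscr{P}$ saturated enough for weak $T$-minimality to apply to parameters from $O$; this loses nothing, since both $\mathrm{IP}$ and dp-rank are witnessed in sufficiently saturated models.
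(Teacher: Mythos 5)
Your argument is correct, and since the paper explicitly leaves this proposition to the reader there is no competing proof to compare it with; what you wrote is the intended argument. The two reductions you invoke (NIP of $T$ is equivalent to NIP of all formulas $\varphi(x,y)$ with $|x|=1$, and dp-minimality is equivalent to the absence of a depth-$2$ ict-pattern in a single free variable, the latter because the element $b$ in the definition of dp-rank is a singleton) are exactly the standard facts from \cite{Simon-Book} that make the transfer via the uniform formula $\phi$ and parameters $c_b\in P^{|z|}$ go through, and your emphasis on the roles of uniformity and of $|x|=1$ identifies precisely the two points where the definition of weak $T$-minimality is used.
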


\noindent
If every externally definable subset of $M$ is already definable in $\mathscr{M}$ then weak $\mathscr{M}$-minimality is equivalent to $\mathscr{M}$-minimality.
This condition is satisfied in the following situations: $\mathscr{M}$ is stable, $\mathscr{M}$ is an o-minimal expansion of $(\mathbb{R},<)$, $\mathscr{M}$ is $\mathbb{Q}_p$, or $\sM$ is $(\Z,+,<)$.
In each of the preceding examples $\sM^{\mathrm{Sh}}$ is interdefinable with $\sM$.
The first case holds as stability of a theory $T$ is equivalent to the assertion that all externally definable sets in all models of $T$ are definable.
The second is a consequence of the Marker-Steinhorn theorem \cite{MaSt-definable}, the third is a result of Delon \cite{Delon-def}, and the fourth follows by Fact~\ref{fact:shelah} and Fact~\ref{fact:Z}.
\medskip

\noindent We seek dp-minimal $\mathscr{M}$ which satisfy the following for all expansions $\sM^\dia$ of $\sM$: 
\[
\textit{If $\mathscr{M}^\dia$ is dp-minimal then $\mathscr{M}^\dia$ is weakly $\mathscr{M}$-minimal,}
\]
or, stronger,
\begin{equation}
    \sM^\dia \textit{ is dp-minimal if and only if } \mathrm{Th}(\sM^\dia) \textit{ is weakly } T\textit{-minimal.}\tag{$\bigstar$}
\end{equation}
\noindent
Weak $\sM$-minimality is the strictest condition we can impose on definable unary sets as $\sM^{\mathrm{Sh}}$ is dp-minimal whenever $\sM$ is dp-minimal.
Outside of very special cases it does not seem possible to replace ``dp-minimal" with a weaker $\nip$-theoretic property in $(\bigstar)$.
For example if $\sM$ is a strongly dependent structure and $A$ is a dense (in the sense of \cite{BV-independent}) $\mathrm{acl}$-independent subset of $\sM$, then $(\sM,A)$ is strongly dependent by \cite{BV-independent} and almost never weakly $\sM^{\mathrm{Sh}}$-minimal.
It also seems that $(\bigstar)$ requires $\sM$ to have a certain amount of algebraic structure.
If $\sM$ is a linear order, more generally a tree, then the expansion of $\sM$ by \textit{all} subsets of $M$ is dp-minimal~\cite[Proposition 4.7]{Simon-dp} and clearly not $\sM^{\mathrm{Sh}}$-minimal.
\medskip

\noindent
Theorem~\ref{thm:main} summarizes most of the results of this paper.
We emphasize that most examples of ordered abelian groups which arise away from ``valuational" settings have only finitely many convex subgroups.

\begin{Thm}\label{thm:main}
Suppose $\mathscr{M}$ is one of the following:
\begin{enumerate}
\item the algebraic closure of a finite field,
\item a non-singular ordered abelian group with only finitely many convex subgroups (in particular:
\begin{enumerate}
    \item a non-singular archimedean ordered abelian group,
    \item a non-singular subgroup of $(\R^n,+)$ with the lexicographic order, or
    \item a finite rank ordered abelian group), or
\end{enumerate}
\item a non-singular subgroup of $\R/\Z$ with the induced cyclic order.
\end{enumerate}
Then $\mathscr{M}^\dia$ is dp-minimal if and only if $\mathrm{Th}(\mathscr{M}^\dia)$ is weakly $\mathrm{Th}(\mathscr{M})$-minimal.
\end{Thm}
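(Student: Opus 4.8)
The plan is to prove the three cases separately, treating the backward implication uniformly and reducing the forward implication in each case to a structure theorem for definable subsets of the home sort. For the backward direction, observe first that $\mathrm{Th}(\mathscr{M})$ is dp-minimal in each case: the algebraic closure of a finite field is strongly minimal; a non-singular ordered abelian group with only finitely many convex subgroups is dp-minimal by the classification of dp-minimal ordered abelian groups (this is exactly the relevant case of it); and a non-singular subgroup of $\mathbb{R}/\mathbb{Z}$ with the induced cyclic order is dp-minimal by a local-to-global reduction to the ordered group case. Granting this, if $\mathrm{Th}(\mathscr{M}^\dia)$ is weakly $\mathrm{Th}(\mathscr{M})$-minimal then $\mathscr{M}^\dia$ is dp-minimal by Proposition~\ref{prop:weak-to-nip}. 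So the real content is the forward direction.

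For the forward direction, suppose $\mathscr{M}^\dia$ is dp-minimal; we must show that for each $L^\dia$-formula $\varphi(x;y)$ with $|x|=1$, every fiber $\varphi(\mathscr{O};a)$ in every model $\mathscr{O}^\dia$ is externally definable in $\mathscr{O}^\dia|L$, uniformly in $a$ and $\mathscr{O}^\dia$. The plan is to prove a structure theorem: in a dp-minimal $\mathscr{M}^\dia$ as above, every definable $X \subseteq M$ is a finite union of sets of the form $C \cap (a + nG)$, where $C$ is convex for the underlying linear order (in case (3) an arc for the cyclic order; in case (1), where there is no order, $C$ is read as all of $M$ and the only constraint is that $X$ be finite or cofinite) and $nG$ is the (always $\emptyset$-definable) subgroup of $n$-th multiples. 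Since $\mathscr{M}$ expands a linear (resp.\ cyclic) order, convex sets (resp.\ arcs) are externally definable in $\mathscr{M}$ by the remark following Fact~\ref{fact:shelah}, while each coset $a + nG$ is $L$-definable; hence every such finite union is externally $\mathscr{M}$-definable. A bound on the number of pieces, obtained from NIP together with standard compactness arguments, then upgrades this to the uniform theory-level statement of weak $T$-minimality. Passing to $\mathscr{M}^{\dia,\mathrm{Sh}}$, which is again dp-minimal by Fact~\ref{fact:shelah}, one may as well assume all convex sets are already definable, which streamlines the argument.

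The heart of the matter is the structure theorem itself. In case (2), first I would reduce to the archimedean (rank one) situation: the finitely many convex subgroups are definable (at worst externally, after passing to the Shelah expansion), so one analyses separately the bottom archimedean piece and the quotient by the largest proper convex subgroup, assembling the result by induction on the number of convex subgroups -- this is the only place the finiteness hypothesis enters. For archimedean non-singular $G$, the key is to exploit translation: if some definable $X \subseteq M$ were not a finite union of pieces of the stated form, then $X$ would exhibit unboundedly many alternations along a suitable indiscernible sequence $I$, and by translating $X$ by carefully chosen group elements one manufactures a second sequence $J$ so that $(I,J)$ is mutually indiscernible while a single parameter encoding membership in $X$ fails to be indiscernible over either of them -- contradicting $\dprk \le 1$. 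Case (3) I would deduce from case (2) by localization: removing a point turns the cyclic order into a linear order compatible with a group chunk, so the case (2) analysis applies on each piece and the local descriptions glue. Case (1) is comparatively soft: $\overline{\mathbb{F}_p}$ is stable, so weak $\mathscr{M}$-minimality coincides with $\mathscr{M}$-minimality, and using strong minimality together with the pseudofinite counting structure (or, equivalently, Johnson's analysis of dp-minimal fields) one checks that a dp-minimal expansion cannot define an infinite co-infinite subset of $M$.

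The hard part will be the structure theorem for unary sets in dp-minimal expansions of ordered abelian groups, and in particular making the ``translate $X$ against itself'' step yield a genuine dp-rank $2$ configuration once several convex subgroups are present: this is precisely where finiteness of the number of convex subgroups and non-singularity are indispensable, since they rule out the non-archimedean pathologies (infinite-index $p$-divisibility phenomena) that otherwise break any such dichotomy. By comparison, the reduction of the cyclic-order case to the ordered case and the treatment of $\overline{\mathbb{F}_p}$ are routine.
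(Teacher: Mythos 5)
Your overall architecture coincides with the paper's: the backward implication via Proposition~\ref{prop:weak-to-nip} once the base structures are known to be dp-minimal, the forward implication via a structure theorem writing unary definable sets as finite unions of $C \cap (a+nM)$ with $C$ convex, induction on the chain of convex subgroups to reduce to the non-valuational case, a universal-cover/localization step for the cyclic case, and Johnson's dichotomy (plus the fact that a field of roots of unity carries no nontrivial valuation or absolute value) for case (1). The gap is in the step you yourself flag as the heart of the matter. The mechanism you propose for the archimedean structure theorem --- ``unboundedly many alternations along an indiscernible sequence, then translate $X$ to manufacture a mutually indiscernible pair'' --- does not work as stated, for two reasons. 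First, alternation count is not the right invariant: a single coset $a+nM$ already alternates infinitely often with its complement on every infinite interval, yet it is a legitimate piece. What must be controlled is the \emph{local coset pattern}, and before one can speak of it one needs the local step (Lemma~\ref{lem:local-behavior}): every infinite interval contains an infinite subinterval $J$ with $J \cap X = J \cap Y$ for $Y$ a finite union of cosets of a single $nM$. In the paper this comes from a germ/stabilizer argument --- the family of translates of $X$ has finitely many germs by dp-minimality, so the local stabilizer is a finite-index definable subgroup, which by Fact~\ref{fact:finite-index}, Fact~\ref{fact:group} and non-singularity contains some $nM$ --- not from an ICT pattern built out of alternations. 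Second, globalizing from ``locally a union of cosets'' to ``finitely many pieces'' requires a definable convex equivalence relation recording where the pattern is locally constant (Theorem~\ref{thm:gen-convex}) together with Goodrick's theorem that an infinite definable family of non-valuational cuts is somewhere dense, and the equivalence ``valuational iff there is a nontrivial definable convex subgroup'' (Proposition~\ref{prop:nonval1}). Your sketch contains no substitute for this analysis of cuts, and without it nothing rules out a set that is everywhere locally a union of cosets of $nM$ but whose coset pattern changes infinitely often.

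A second omission: your argument is phrased for dense orders (alternations ``along a suitable indiscernible sequence'' inside intervals presuppose that every interval is infinite), but case (2) includes discrete groups such as $\Z^n$ with the lexicographic order, and in saturated models the discrete case needs a genuinely different branch. The paper handles it by naming the convex subgroup $1_M\Z$ in the Shelah expansion, invoking the known description of definable subsets of $(\Z,+,<)$ (Fact~\ref{fact:Z}) on a coset of that subgroup, and using saturation to extend the resulting coset description past $\N$; there is no analogue of the translation-and-density argument there. The remaining reductions --- the backward implication, the induction on convex subgroups, the cyclic case, and the field case --- are correctly set up and match the paper.
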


\noindent
An abelian group $A$ is \textbf{non-singular} if $|A/nA| < \aleph_0$ for all $n \geq 1$.
An ordered abelian group is dp-minimal if and only if it is non-singular, so the assumption of non-singularity simply ensures that weak $\mathrm{Th}(\sM)$-minimality implies dp-minimality.
\medskip

\noindent Item $(3)$ is a corollary to $(2a)$.
Our interest in $(3)$ is an application to the classification problem for dp-minimal expansions of $(\Z,+)$.
Every known proper dp-minimal expansion of $(\Z,+)$ defines either $<$, a $p$-adic valuation~\cite{AldE}, or a dense cyclic group order.
Walsberg and Tran \cite{cyclic-orders} show that $(\Z,+,C)$ is dp-minimal for any dense cyclic group order $C$.
The proof of $(3)$ characterizes dp-minimal expansions of such $(\Z,+,C)$ in terms of unary definable sets.
\medskip

\noindent
Fix a prime $p$.
We also prove a weaker $p$-adic result.
We let $\K$ be a finite extension of $\Q_p$ throughout.
Recall that the $p$-adic valuation on $\K$ is $\K$-definable \cite[Lemma 1.5]{belair-def}, so we do not distinguish between $\K$ as a field or as a valued field.
We also make use of the fact that the $p$-adic valuation $\K$ is discrete.
The field $\Q_p$ is shown to be dp-minimal in \cite[Corollary 7.8]{ADHMS} and, as the authors observe in the first paragraph of \cite[Section 7.2]{ADHMS}, their argument shows that $\K$ is dp-minimal.
(Hence all characteristic zero local fields are dp-minimal.)

\begin{Thm}
\label{thm:p-adics}
Any dp-minimal expansion of $\K$ is $\K$-minimal.
\end{Thm}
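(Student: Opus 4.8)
The plan is to establish the ostensibly stronger assertion that $\sM^\dia$ is weakly $\Q_p$-minimal: since every externally definable subset of $\Q_p$ is $\Q_p$-definable by a theorem of Delon~\cite{Delon-def} (as recalled above), this is equivalent to $\Q_p$-minimality. So fix an $\sM^\dia$-definable set $X \subseteq \Q_p$; the goal is to show $X$ is $\Q_p$-definable. Recall the relevant features of the reduct $\Q_p$: by cell decomposition (Denef) the $\Q_p$-definable subsets of $\Q_p$ are exactly the finite Boolean combinations of balls $\{x : v(x-a) \ge \gamma\}$ and cosets of the finite-index subgroups $(\Q_p^\times)^n$; moreover the value group $\Z$ is pure and stably embedded, and the residue field is finite. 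Two consequences of dp-minimality of $\sM^\dia$ will drive the proof: (i) the structure induced on the value group is pure Presburger, and (ii) $X$ has finite topological boundary.

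For (i): the value group is interpreted in $\sM^\dia$ as a definable quotient of a subset of a single sort, so the structure $\sG$ it induces from $\sM^\dia$ is again dp-minimal, and $\sG$ expands $(\Z,+,<)$; by Fact~\ref{fact:Z} every $\sG$-definable subset of every $\Z^n$ is $(\Z,+,<)$-definable. For (ii): were $\partial X$ infinite, one could select infinitely many balls in which $X$ is ``mixed'' (neither contained in nor disjoint from $X$) and extract from them, via Ramsey and saturation, two mutually indiscernible sequences together with a single point whose type over neither is constant --- one sequence ranging over disjoint mixed balls and the other over the depths at which $X$ stays mixed inside a fixed such ball --- contradicting $\dprk(\sM^\dia) \le 1$; this is the $p$-adic counterpart of the fact that a definable unary set in a dp-minimal ordered structure is a finite union of convex sets. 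Deleting the finitely many points of $\partial X$, we may assume $X$ is clopen, hence a union of balls. It then remains to see that such a clopen $\sM^\dia$-definable $X$ is determined by value-group and residue-field data: for each $n$, the way $X$ partitions according to valuation and $(\Q_p^\times)^n$-coset should be ``Presburger in the valuation'', after which (i), finiteness of the residue field, and cell decomposition assemble $X$ from finitely many balls and $(\Q_p^\times)^n$-cosets, so $X$ is $\Q_p$-definable.

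I expect the last reduction to be the crux. In the pure field $\Q_p$ it is Ax--Kochen--Ershov / Denef--Pas elimination down to value group and residue field, which is not available for the expansion $\sM^\dia$; dp-minimality must be invoked once more, as an ``orthogonality'' between the residue-field directions and the valuation direction --- any failure again producing two mutually indiscernible sequences, one along residue levels and one in the value group, cut by a common point. Further routine bookkeeping is needed for cosets of $(\Q_p^\times)^n$ (and the anomalies at $p = 2$), and, should one prefer to carry out parts of the argument in a saturated model, to transfer the conclusions back to $\Q_p$ via Delon's theorem.
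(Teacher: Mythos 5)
Your outline assembles several correct ingredients --- Delon's theorem, Presburger-ness of the structure induced on the value group, and finiteness of $\partial X$ (which is indeed a known consequence of dp-minimality for unary definable sets in dp-minimal valued fields) --- but the step you yourself flag as ``the crux'' is exactly where all the content lies, and your sketch of it is not an argument. After deleting the finite boundary, $\mathrm{int}(X)$ is clopen only in the \emph{non-compact} space $\Z_p \setminus \partial X$, so compactness does not make it a finite union of balls; the entire difficulty is concentrated in the germ of $X$ at each of the finitely many boundary points. Already $X = \{x \in \Z_p : v(x) \in S\}$ for various $S \subseteq \N$ shows that ``open away from $0$'' says essentially nothing, and while your point (i) controls which valuations occur in $X$, it does not control how $X$ meets each infinite annulus $v^{-1}(k)$, of which there are infinitely many; an appeal to ``orthogonality'' between residue-field and valuation directions does not by itself bound that behaviour uniformly in $k$.

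The paper supplies precisely the missing local analysis as Theorem~\ref{thm:discrete-val}: at an accumulation point (say $0$) one forms the local multiplicative stabilizer $G$ of $X$, uses the fact that a definable family of unary sets has only finitely many germs at a point (Fact~\ref{fact:germ}, from \cite{JaSi-dp}) to conclude $G$ has finite index in $\Q_p^\times$, hence contains some $P_n$ by Lemma~\ref{lem:pgroup}, and then bootstraps with multiplication by a power of the uniformizer to show the germ of $X$ at $0$ is a finite union of germs of cosets of $P_n$. Only after that does compactness of $\Z_p$ glue the local descriptions into $\Q_p$-definability. Your proposal contains no substitute for this finitely-many-germs/stabilizer mechanism, so as written it has a genuine gap at its central step.
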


\noindent
Combining with Fact~\ref{fact:pierre} we see that if $K$ is a characteristic zero local field other than $\mathbb{C}$ then any dp-minimal expansion of $K$ is $K$-minimal.
This is sharp, if $v$ is a nontrivial valuation on $\mathbb{C}$ then $(\mathbb{C},v)$ is dp-minimal and not strongly minimal, hence not $\mathbb{C}$-minimal.
We do not know if the theory of every dp-minimal expansion of $\Q_p$ is $\mathrm{Th}(\Q_p)$-minimal.
Theorem~\ref{thm:main}.2.C shows that if $R$ is an archimedean real closed field then any dp-minimal expansion of $R$ is weakly o-minimal.
We prove the $p$-adic version of this result.
We first recall Fact~\ref{fact:p-adic}, which characterizes the relevant fields.

\begin{Fact}
\label{fact:p-adic}
Let $K$ be a field of characteristic zero.
Then the following are equivalent:
\begin{enumerate}
\item $K$ admits a discrete Henselian valuation $v$ with  finite residue field $\F$.
\item $K$ is isomorphic to an elementary subfield of a finite extension of $\Q_p$, where $p$ is the characteristic of $\F$.
\end{enumerate}
If (1) is satisfied then $v$ is $K$-definable.
\end{Fact}

\begin{proof}
It is easy to see that (2) implies (1).
Suppose (1) and let $v$ be such a valuation on $K$.
Then the completion of $(K,v)$ is a complete discretely valued field of characteristic zero with residue field $\F$.
Hence the completion is isomorphic to a finite extension $\K$ of $\Q_p$.
We therefore assume that $K$ is a subfield of $\K$ and $v$ is the induced valuation.
By \cite[Theorem 3.1]{prestel-roquette} $K$ is $p$-adically closed.
Model completeness for $p$-adically closed fields \cite[Theorem 5.1]{prestel-roquette} shows that $K$ is an elementary subfield of $\K$.
\end{proof}

\noindent
With this in place we state Theorem~\ref{thm:p-adic-1}.

\begin{Thm}
\label{thm:p-adic-1}
Suppose that $K$ is discretely Henselian valued field of characteristic zero with finite residue field.
Then any dp-minimal expansion of $K$ is weakly $K$-minimal.
\end{Thm}
\noindent
Ideally, results such as Theorems~\ref{thm:main}, \ref{thm:p-adics}, and \ref{thm:p-adic-1} should be corollaries to more general results.
For example Fact~\ref{fact:pierre} is a corollary to the theorem of Simon that an expansion of an ordered abelian group is o-minimal if and only if it is definably connected and dp-minimal.
Fact~\ref{fact:Z} generalizes to Theorem~\ref{thm:ZZ} below.
Theorem~\ref{thm:main}.1 is a corollary to the theorem of Johnson that a non-strongly minimal dp-minimal field admits a definable type V field topology.
The first claim of Theorem~\ref{thm:p-adics} is a corollary to a more general result on dp-minimal expansions of discretely valued fields, Theorem~\ref{thm:discrete-val} below.
Theorem~\ref{thm:main}.2 is a special case of Theorem~\ref{thm:main-cc}.

\begin{Thm}
\label{thm:main-cc}
Suppose $(M,+,<)$ is an ordered abelian group and $\sM$ is an expansion of $(M,+,<)$ which defines only finitely many convex subgroups.
Then $\sM$ is dp-minimal if and only if every unary definable set in every elementary extension $\mathscr{N}$ of $\sM$ is a finite union of sets of the form $C \cap (a + nN)$ for convex $C$ and $a \in N$.
\end{Thm}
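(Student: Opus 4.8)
The plan is to prove both implications, the converse being routine and the forward one the substantive part, which I would prove by induction on the number of definable convex subgroups. For the converse, assume that in every elementary extension $\mathscr{N}$ of $\sM$ every unary definable set is a finite union of sets $C\cap(a+nN)$ with $C$ convex and $a\in N$. Each such set is externally definable in the reduct $(N,+,<)$: a convex subset of an ordered structure is always externally definable, and $a+nN$ is outright $(N,+,<)$-definable. Hence $\mathrm{Th}(\sM)$ is weakly $\mathrm{Th}(M,+,<)$-minimal; since $(M,+,<)$ is non-singular in this situation, $\mathrm{Th}(M,+,<)$ is dp-minimal, and Proposition~\ref{prop:weak-to-nip} gives that $\sM$ is dp-minimal.

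For the forward implication, assume $\sM$ is dp-minimal with only finitely many definable convex subgroups. Both hypotheses are elementary: dp-minimality is a property of the theory, and the number of definable convex subgroups cannot grow in an elementary extension, since a new one would have to lie strictly between two consecutive definable convex subgroups of $\sM$, and ``there is a definable convex subgroup strictly between two given ones'' is first-order in the relevant parameters. So it suffices to show that every unary $\sM$-definable set has the stated form, and in fact I would prove the uniform version: for every $\sM$-definable family $(X_t)_t$ of subsets of $M$ there are an $n$ and an $N$ such that every $X_t$ is a union of at most $N$ sets $C\cap(a+nM)$ with $C$ convex. Induct on the number $s$ of definable convex subgroups. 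When $s\le 2$ the group $(M,+,<)$ has no proper nonzero definable convex subgroup, i.e.\ is ``definably archimedean''; this base case is the archimedean one, handled via the methods behind Fact~\ref{fact:pierre} in the dense case and by Fact~\ref{fact:Z} together with Fact~\ref{fact:shelah} in the discrete case, the uniformity of $n$ and $N$ coming from quantifier elimination in the relevant pure theories (the theory of divisible ordered abelian groups, respectively Presburger arithmetic).

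For the inductive step let $H$ be the maximal proper definable convex subgroup, which exists since the definable convex subgroups are linearly ordered by inclusion and finite in number. Pass to the quotient ordered group $\bar M:=M/H$ with its induced structure $\bar\sM$, and to $H$ with its full induced structure $\sM_H$. Both are dp-minimal, as dp-rank does not increase on passing to a definable subset or to a definable quotient; $\bar\sM$ has no proper nonzero definable convex subgroup — such a subgroup would pull back to a definable convex subgroup of $M$ strictly between $H$ and $M$ — so $\bar\sM$ falls under the base case, while $\sM_H$ has $s-1$ definable convex subgroups, so the inductive hypothesis applies to it. Given an $\sM$-definable family $(X_t)_t$ in $M$, the projections $\pi(X_t)\subseteq\bar M$ form an $\bar\sM$-definable family, controlled by the base case, and the fibers $\{h\in H:a+h\in X_t\}$, with $a$ ranging over a lift of $\bar M$ and $t$ over the index set, form an $\sM_H$-definable family, controlled uniformly by the inductive hypothesis. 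One then reassembles the two descriptions: a convex subset of $M$ is the preimage of a convex subset of $\bar M$ corrected by convex conditions on the two boundary fibers, and $x\equiv y\pmod{nM}$ amounts to $\pi(x)\equiv\pi(y)\pmod{n\bar M}$ together with $x-y\in nM\cap H$, a subgroup of finite index over $nH$ by non-singularity; tracking how the numbers of convex pieces combine, one gets that each $X_t$ is a union of a bounded number of sets $C\cap(a+nM)$ with $C$ convex.

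The main obstacle is securing the uniformities — a single modulus $n$ and a single bound $N$ across the entire definable family, hence across all cosets of $nM$ simultaneously. This is why I would run the induction on the uniform statement rather than the plain one: the base cases have uniformity by quantifier elimination, and in the inductive step it is inherited from $\bar\sM$ and $\sM_H$. The residual work — verifying that the number of convex pieces of $X_t$ in $M$ is controlled by the number of convex pieces of $\pi(X_t)$ in $\bar M$ and of the fibers in $H$, and correctly splicing the two moduli through the extension $0\to H\to M\to\bar M\to 0$ — is bookkeeping with the convex-subgroup filtration rather than a new idea; dp-minimality enters only through the base case and through the fact that it is preserved on passing to $\bar\sM$ and $\sM_H$.
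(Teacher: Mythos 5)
Your overall architecture---converse via external definability of convex sets, forward direction by induction on the number of definable convex subgroups, quotienting by one of them---matches the paper's proof of Theorem~\ref{thm:cc}, and your converse and your argument that the hypotheses pass to elementary extensions are essentially correct (the paper's Proposition~\ref{prop:cc-converse} is the same, though non-singularity of $(M,+)$ is not free: the paper derives it from a cited result rather than asserting it). The first genuine problem is the base case. The non-valuational case (no proper nontrivial definable convex subgroup) is not ``the archimedean case handled by the methods behind Fact~\ref{fact:pierre}''; it is the technical heart of the theorem and occupies most of Section~\ref{section:convex}. Simon's argument, and its discrete analogue via Fact~\ref{fact:Z} plus saturation, yield only the \emph{local} statement of Lemma~\ref{lem:local-behavior}: every infinite interval contains \emph{some} infinite subinterval on which $X$ agrees with a union of cosets of $nM$. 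Fact~\ref{fact:pierre} upgrades this to a global statement using connectedness of $\R$, which is unavailable over, say, $(\Q,+,<)$ or a definably archimedean but non-archimedean group. The general upgrade goes through a definable convex equivalence relation (Theorem~\ref{thm:gen-convex}) and then must show that the exceptional set of finite classes is finite; this is a second substantive use of dp-minimality, via Goodrick's lemma that a definable nowhere dense family of non-valuational cuts is finite (Fact~\ref{fact:goodrick}) in the dense case and a rank-$2$ array built from one point per coset (Lemma~\ref{lem:coset-intersect}) in the discrete case. None of this appears in your sketch.

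The second, and more decisive, gap is the reassembly step. Knowing that $\pi(X)\subseteq M/H$ is a bounded union of cnc sets and that every fiber $X\cap(a+H)$ is a bounded union of cnc subsets of $H$ does \emph{not} imply that $X$ is a finite union of cnc subsets of $M$. A union of $k$ cnc subsets of $M$ meets at most $2k$ cosets of $H$ in a proper nonempty convex piece, whereas a set whose fibers are intervals $[l_a,r_a]$ with endpoints drifting from coset to coset has infinitely many convex components while its projection and every fiber are as simple as possible. Ruling out this drift is exactly where dp-minimality enters a third time (Propositions~\ref{prop:nonval1} and~\ref{prop:going-down}: all but finitely many cosets of $H$ lie inside a single class of the equivalence relation of Theorem~\ref{thm:gen-convex}), and that argument only works when the subgroup one quotients by is \emph{non-valuational}. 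This is why the paper takes $H$ to be the \emph{minimal} nontrivial definable convex subgroup and runs the induction on $M/H$; your maximal $H$ is valuational as soon as there are at least two nontrivial definable convex subgroups, and the anti-drift statement over $M/H$ is precisely the content your ``bookkeeping'' elides. (A smaller wrinkle: the family of fibers $X_t\cap(a+H)$ with $a$ ranging over $M$ is not an $\sM_H$-definable family, since the parameters live outside $H$; this is repairable but is another unacknowledged step.)
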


\noindent
Theorem~\ref{thm:main-cc} follows from Theorem~\ref{thm:main-eq}.

\begin{Thm}
\label{thm:main-eq}
Suppose $(M,+,<)$ is an ordered abelian group, $\sM$ is a dp-minimal expansion of $(M,+,<)$, and $X$ is an $\sM$-definable subset of $M$.
Then there is $n$ and an $\sM$-definable convex equivalence relation $E$ on $M$ such that $E$ has only finitely many finite classes and for every $a \in M$ there is a finite $B \subseteq M$ such that $E_a \cap X = E_a \cap [\bigcup_{b \in B} b + nM]$, where $E_a$ is the $E$-class of $a$.
\end{Thm}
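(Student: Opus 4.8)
The plan is to pass to a sufficiently saturated elementary extension $\mathscr{N}\succ\sM$ (so $\dprk(\mathscr{N})=\dprk(\sM)\le 1$), carry out the construction there, and observe that the resulting statement is first-order over the parameters defining $X$ and hence descends to $\sM$. Since $\sM$ is dp-minimal so is its reduct $(N,+,<)$, hence $N$ is non-singular and $N/kN$ is finite for every $k\ge 1$; fix for each $n$ a finite set $R_n\subseteq N$ of coset representatives for $N/nN$ and put $d_n:=\max\{|r|:r\in R_n\}$, so that $[-d_n,d_n]$, every translate of it, and every convex subset of $N$ containing such a translate meets every coset of $nN$. For $a\le b$ in $N$ declare $a\sim_n b$ iff there is $c\ge 2d_n$ with $X$ \emph{$nN$-periodic on} $[a-c,b+c]$, meaning $x\in X\Leftrightarrow y\in X$ whenever $x,y\in[a-c,b+c]$ and $x-y\in nN$; put $B_n:=\{a\in N:\lnot(a\sim_n a)\}$. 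A short argument with the remark on overlaps above shows $\sim_n$ is transitive with convex classes and is reflexive exactly off $B_n$, so $E_n$, defined by $a\,E_n\,b\iff a=b\vee a\sim_n b$, is an $\mathscr{N}$-definable convex equivalence relation whose classes are the $\sim_n$-classes together with a singleton for each point of $B_n$. Note that no external definability is needed: $X$, the predicate $nN$, and $B_n$ are all plainly $\mathscr{N}$-definable.

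The heart of the matter --- and where dp-minimality enters --- is the claim that \emph{$B_n$ is finite for some $n$}. I would argue by contradiction: if $B_n$ is infinite for every $n$, then, after replacing $n$ by a suitable multiple, one extracts an infinite indiscernible sequence of points $a_i\in B_n$ and, using that near each $a_i$ the $nN$-periodicity of $X$ fails at every scale $c\ge 2d_n$, a transverse sequence of ``defects'' --- pairs lying in a common coset of $nN$ with exactly one member in $X$ --- and one aims to arrange these into two mutually indiscernible sequences over which the ($\mathrm{NIP}$) set $X$ is not eventually indiscernible along either coordinate, contradicting $\dprk(\mathscr{N})\le 1$. The serious difficulty is to secure genuine mutual indiscernibility: the defects at different scales and different base points must be chosen coherently, and one must rule out the possibility that all the ``badness'' is being manufactured by passing to too coarse a quotient $N/nN$ --- this is exactly what forces the passage to multiples of $n$ and the use of finiteness of $N/nN$ to keep the data bounded. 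I expect this to be the main obstacle.

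Granting the claim, fix such an $n$. As $B_n$ is finite, $N\setminus B_n$ is a union of finitely many maximal convex pieces $J_1<\cdots<J_m$ with $m\le|B_n|+1$, and each $\sim_n$-class lies in one $J_i$. A local-to-global argument shows each $J_i$ is a \emph{single} $\sim_n$-class: choosing for $x\in J_i$ a witness $c_x\ge 2d_n$ with $X$ $nN$-periodic on $[x-c_x,x+c_x]$, any two such intervals with $|x-x'|\le 2d_n$ overlap in a convex set containing a translate of $[-d_n,d_n]$, hence meeting every coset of $nN$, which forces the set of cosets meeting $X$ in them to agree; this ``active set'' is therefore locally constant, hence constant on the connected set $J_i$, whence $X$ is $nN$-periodic on $\bigcup_{x\in J_i}[x-c_x,x+c_x]\supseteq J_i$ and all points of $J_i$ are $\sim_n$-related. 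Thus $E_n$ has at most $2|B_n|+1$ classes, so finitely many finite ones. For the displayed identity: on a singleton class $\{b\}$, $X\cap\{b\}$ is $\emptyset$ or $\{b\}=\{b\}\cap(b+nN)$; on a class $J_i$, $nN$-periodicity of $X$ on $J_i$ gives $X\cap J_i=J_i\cap\bigcup_{a'\in A}(a'+nN)$ with $A$ a set of representatives of the (at most $|N/nN|$) active cosets. Finally, that the formula defining $E_n$ (with parameter $d_n$ and the parameters defining $X$) defines an equivalence relation with at most $2|B_n|+1$ classes, and that every class satisfies the coset identity with $|A|\le|N/nN|$, are first-order assertions in those parameters with $n$, $|B_n|$, $|N/nN|$ now fixed integers; they hold in $\mathscr{N}$, hence in $\sM$, which yields the required $\sM$-definable $E$ over some $d\in M$.
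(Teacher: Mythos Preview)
Your argument breaks at the local-to-global step in the final paragraph, and the break is not repairable within your framework. You claim that each maximal convex piece $J_i$ of $N\setminus B_n$ is a single $\sim_n$-class, so that $E_n$ has at most $2|B_n|+1$ classes. This is false. Take $\mathscr{M}$ to be the dp-minimal ordered field $\mathbb{R}((t))$ and $X=\{a\in K^\times:v(a)\in 2\mathbb{Z}\}=(K^\times)^2\cup-(K^\times)^2$. The additive group is divisible, so $nN=N$ and $d_n=0$ for every $n$; hence $B_n=\emptyset$ (the witness $c=0$ works at every point) and $J_1=N$, yet $\sim_n$ has infinitely many classes --- one for each convex set $\{a>0:v(a)=m\}$, $m\in\mathbb{Z}$, their negatives, and the singleton $\{0\}$. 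Your chaining argument fails because ``locally constant with radius $2d_n$'' does not force ``constant'' on a convex set: when $d_n=0$ the radius is zero and the chaining is vacuous, and even when $d_n>0$ a convex subset of a saturated ordered abelian group need not be $2d_n$-chain-connected (two points can lie at infinite distance relative to any fixed element). The theorem only asks for finitely many \emph{finite} classes, and in examples like this one the correct $E$ has infinitely many infinite classes; your target conclusion is simply too strong.

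The paper's construction handles this by building $E$ relative to the maximal $n$-regular convex subgroup $R_n$: one declares $a<b$ equivalent when there exist $a'<a<b<b'$ lying in a common coset of $R_n$, with $(a',a)$ and $(b,b')$ each of size $\ge n$ and $X$ agreeing with a union of $nM$-cosets on $(a',b')$. Restricting to a coset of $R_n$ is precisely what makes the overlap/transitivity argument succeed (any interval of size $\ge n$ inside $R_n$ meets every coset of $nR_n$), at the price of allowing infinitely many infinite classes. The existence of a single $n$ that works locally everywhere is established by a separate lemma --- not by an ad hoc ict-pattern as you sketch, but via Simon's local-stabilizer argument in the dense case and the classification of dp-minimal expansions of $(\mathbb{Z},+,<)$ in the discrete case --- and finiteness of the set of finite classes then comes from Goodrick's lemma that a definable family of non-valuational cuts in a dp-minimal expansion is either finite or somewhere dense. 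None of these ingredients appear in your sketch, and the $\mathbb{R}((t))$ example shows they (or something like them) cannot be bypassed.
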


\noindent
Theorem~\ref{thm:main-cc} shows in particular that a dp-minimal expansion of $(\Q,+,<)$ is weakly o-minimal.
The reader might wonder if there are any proper weakly o-minimal expansions of $(\Q,+,<)$.
The o-minimal version of this question was at one point a subject of intense interest.
Wilkie~\cite{Wilkie-fusion} showed that there are proper o-minimal expansions of $(\Q,+,<)$.
In Section~\ref{section:dense-pairs} we describe a correspondence between weakly o-minimal expansions of $(\Q,+,<)$ and o-minimal expansions $\mathscr{R}$ of $(\R,+,<)$ such that $(\mathscr{R},\Q)$ is a ``dense pair".

\section{Algebraic closures of finite fields}
\noindent
Suppose $\mathbb{F}$ is the algebraic closure of a finite field and $\mathscr{F}$ is an expansion of $\mathbb{F}$.
Johnson~\cite{Jo-canonical} has shown that a dp-minimal expansion of a field is either strongly minimal or admits a definable non-trivial type V field topology.
A type V field topology is induced by an absolute value or valuation, see \cite[Appendix B]{EP-value}.
It is well-known that $\mathbb{F}$ does not admit a non-trivial absolute value or valuation.
We recall the proof.
Fix $a \in \mathbb{F}^{\times}$.
Then $a^n = 1$ for some $n$.
Suppose $v$ is a valuation on $\mathbb{F}$.
As $a^n = 1$ we must have $v(a) = 0$.
So $v$ is a trivial valuation.
Suppose $\|,\|$ is an absolute value on $\mathbb{F}$.
Then $\|a^n\| = \|1\| = 1$ and $\|a^n\| = \|a\|^n$, so we must have $\|a\| = 1$.
So $\|\hspace{3pt}\|$ is a trivial absolute value.
Strong minimality implies dp-minimality, hence:

\begin{Cor}
The expansion $\mathscr{F}$ is dp-minimal if and only if $\mathscr{F}$ is strongly minimal.
\end{Cor}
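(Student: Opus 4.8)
The plan is simply to assemble the ingredients recalled just above. For the backward direction, if $\mathscr{F}$ is strongly minimal then $\mathrm{Th}(\mathscr{F})$ is $\omega$-stable of Morley rank one; such a theory has dp-rank one (there is a unique non-algebraic $1$-type over every parameter set), so $\mathscr{F}$ is dp-minimal. This is the implication already asserted without comment before the statement, and nothing more is needed here.

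For the forward direction, suppose $\mathscr{F}$ is dp-minimal. Since $\mathscr{F}$ expands the field $\mathbb{F}$, Johnson's theorem \cite{Jo-canonical} applies and yields a dichotomy: either $\mathscr{F}$ is strongly minimal, in which case we are done, or $\mathscr{F}$ admits a definable non-trivial type V field topology. I would rule out the second alternative: by \cite[Appendix B]{EP-value} a type V field topology on $\mathbb{F}$ is induced by a non-trivial absolute value or valuation on $\mathbb{F}$, and the computation displayed immediately before the statement shows that $\mathbb{F}$, all of whose elements are roots of unity, carries no non-trivial absolute value or valuation. Hence the second case is impossible and $\mathscr{F}$ must be strongly minimal.

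There is no genuine obstacle here: the entire content is Johnson's classification of dp-minimal fields, cited as a black box, together with the one-line observation that $\mathbb{F}^{\times}$ is a torsion group. The only point requiring any care is that it is a \emph{definable} field topology, and hence an honest valuation or absolute value on the field $\mathbb{F}$ itself, that Johnson's theorem produces, so that the elementary non-existence argument genuinely applies and no further appeal to saturation or to the structure $\mathscr{F}$ is required.
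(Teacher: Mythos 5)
Your proof is correct and follows essentially the same route as the paper: the backward direction is the standard fact that strong minimality implies dp-minimality, and the forward direction combines Johnson's dichotomy for dp-minimal fields with the observation that $\mathbb{F}^{\times}$ being torsion rules out any non-trivial absolute value or valuation, hence any non-trivial type V field topology.
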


\noindent 
So $\mathscr{F}$ is dp-minimal if and only if $\mathrm{Th}(\mathscr{F})$ is $\mathrm{Th}(\mathbb{F})$-minimal.
By applying a recent results of Johnson we also obtain a somewhat weaker result for expansions of finite dp-rank.
We say that a structure is \textbf{dp-finite} if it has finite dp-rank.
Johnson has shown that an unstable dp-finite expansion of a field admits a definable V topology~\cite{2004.14732} and a stable dp-finite expansion of a field has finite Morely rank~\cite{1910.05932}.
Finite Morley rank implies finite dp-rank, so we obtain:

\begin{Cor}
The structure $\mathscr{F}$ is dp-finite if and only if $\mathscr{F}$ has finite Morley rank.
\end{Cor}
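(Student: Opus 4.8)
The plan is to derive the corollary from Johnson's dichotomy for positive characteristic fields of finite dp-rank, using the triviality of all valuations on $\mathbb{F}$ recalled above. First I would dispose of the implication from finite Morley rank to finite dp-rank, which has nothing to do with $\mathbb{F}$ specifically: if $\mathscr{F}$ has finite Morley rank then it is $\omega$-stable, hence superstable, and every type has Morley rank at most $\mathrm{MR}(\mathscr{F}) < \omega$, hence U-rank and weight at most $\mathrm{MR}(\mathscr{F})$; since dp-rank is bounded by weight in any stable theory (see \cite[Chapter 4]{Simon-Book}), $\mathscr{F}$ has finite dp-rank. Alternatively one simply cites the standard fact that finite Morley rank implies finite dp-rank.

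For the converse, suppose $\mathscr{F}$ has finite dp-rank. Since $\mathscr{F}$ is an expansion of the field $\mathbb{F}$, which has positive characteristic, Johnson's \cite[Proposition 11.1]{Johnson-dp-finite} applies and gives that either $\mathscr{F}$ admits an $\mathscr{F}$-definable (non-trivial) valuation or $\mathscr{F}$ has finite Morley rank. The first alternative is impossible: an $\mathscr{F}$-definable valuation is in particular a valuation on the field $\mathbb{F}$, and by the computation recalled above every valuation on $\mathbb{F}$ is trivial, since each element of $\mathbb{F}^{\times}$ is a root of unity. Hence $\mathscr{F}$ has finite Morley rank, as desired.

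I do not expect any real obstacle here: the entire argument is immediate once Johnson's theorem is in hand. The only point that deserves a moment's care is that the triviality argument for valuations on $\mathbb{F}$ needs no definability hypothesis, so it kills the first horn of Johnson's dichotomy for any expansion of $\mathbb{F}$; and, relatedly, that ``definable valuation'' in Johnson's statement should be read as non-trivial, as otherwise the dichotomy would be vacuous.
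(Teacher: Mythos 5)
Your proof is correct and follows exactly the paper's route: one direction is the standard fact that finite Morley rank implies finite dp-rank, and the other applies Johnson's dichotomy for positive characteristic fields of finite dp-rank together with the observation (recalled in the paper) that $\mathbb{F}$ carries no non-trivial valuation because every element of $\mathbb{F}^{\times}$ is a root of unity. Nothing further is needed.
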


\noindent
We are not aware of any proper expansions of $\mathbb{F}$ of finite Morley rank.
For example it is still an open question if there is a nontrivial multiplicative subgroup $G$ of $\mathbb{F}$ such that $(\mathbb{F},G)$ has finite Morley rank.
If there are infinitely many $\mathrm{Char}(\mathbb{F})$-Mersenne primes then there is no such subgroup \cite[Theorem 4]{Wagner-Bad}.

\section{Discretely valued fields}
\label{section:discretely}
\noindent 
We first show that dp-minimal expansions of discretely valued fields are ``locally $P$-minimal".
In this section $K$ is a field, $v : K^\times \to \Z$ is a non-trivial discrete valuation on $K$, and $\sK$ is an expansion of $(K,v)$.
We let $B(a,k)$ be the ball $\{ b \in K : v(a - b) \geq k \}$, $B(k) := B(0,k)$, and $P_n := \{ a^n : a \in K^\times \}$ for all $n$.
Dp-minimal valued fields are Henselian~\cite{JSW}, so we suppose that $(K,v)$ is Henselian.

\begin{Thm}
\label{thm:discrete-val}
Suppose $\sK$ is dp-minimal, $X$ is a $\sK$-definable subset of $X$, and $a \in K$.
Then there is $k$ such that $B(a,k) \cap X = B(a,k) \cap Y$ where $Y$ is a finite union of sets of the form $a + b P_n$ for some $b \in K$ and $n$.
\end{Thm}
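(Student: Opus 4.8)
The plan is to reduce Theorem~\ref{thm:discrete-val} to a single local statement — that near $a$ the set $X$ is closed under multiplication by a finite-index power subgroup of $K^\times$ — and to deduce that from dp-minimality together with the tameness of the value group. After translating $X$ by $-a$ (the group operation is available) we may assume $a = 0$, so the target sets $a + bP_n$ become $bP_n$ and the singleton $\{a\}$ is the case $b = 0$. The first preliminary is that $(K,v)$ is elementarily equivalent to a finite extension of $\Q_p$: being a dp-minimal reduct of $\sK$ it is a dp-minimal valued field with $\Z$-valued valuation, and positive characteristic is impossible (a discretely valued field of characteristic $p$ is not Artin--Schreier closed, hence not $\nip$) while an infinite residue field would force $\dprk(K,v)\geq 2$, so the classification of dp-minimal fields \cite{Johnson} leaves only this case. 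In particular $[K^\times:P_n]$ is finite for every $n$, $\bigcap_n P_n = \{1\}$, and each coset $cP_n$ is definable; hence $\bigl(\bigcup_{c\in S}cP_n\bigr)\cap B(k)$ is already a finite union of sets of the prescribed form for any finite $S$. So it suffices to prove the \emph{local periodicity lemma}: there are $n\geq 1$ and $k$ such that $X\cap B(k)$ is closed under multiplication by $P_n$ inside $B(k)$, i.e. $x\in X$, $p\in P_n$, $v(x),v(px)\geq k$ imply $px\in X$.

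Granting the lemma, the theorem follows at once: $X\cap B(k)$ is then the union of the finitely many cosets $cP_n$ it meets, each intersected with $B(k)$, together with the point $0$ if $0\in X$; translating back by $a$ exhibits $B(a,k)\cap X$ as $B(a,k)\cap Y$ with $Y$ a finite union of sets $a + bP_n$. The real work is the lemma, which I would prove by contradiction. If it fails then for every $n$ and $k$ there are $x\in X$, $p\in P_n$ with $v(x),v(px)\geq k$ but $px\notin X$ (or the symmetric failure). In a saturated elementary extension of $\sK$ — passing to its Shelah expansion if convenient, still dp-minimal by Fact~\ref{fact:shelah} — I would assemble these failures, using $\bigcap_n P_n=\{1\}$, into a two-dimensional pattern witnessed by $X$: one coordinate indexed by a long increasing sequence of valuation levels, the transverse one by $P_n$-cosets, which refine without bound as $n$ grows. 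Along the level coordinate, Fact~\ref{fact:Z} applied to the value group $\Z$ — whose $\sK$-induced structure is a dp-minimal expansion of $(\Z,+,<)$ and hence, by that fact, defines no new sets — forces the trace of $X$ across levels to be eventually periodic, fixing one modulus; the finiteness of the residue field controls the transverse coset direction and yields a second modulus; and a compactness argument over the value group lets one choose a single $n$ and a single $k$. But then the two coordinates give two sequences, mutually indiscernible over the parameters of $X$, such that some single element realizing the corresponding corner of the pattern is indiscernible over neither — contradicting $\dprk(\sK)\leq 1$.

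The main obstacle is exactly this last step: upgrading ``for all $n,k$ there is a local failure'' to a genuine depth-two dp-configuration, with the valuation-level axis and the multiplicative-coset axis made simultaneously mutually indiscernible and with the modulus $n$ and radius $k$ chosen uniformly. The normalization and the final assembly are routine bookkeeping with the finite-index subgroups $P_n$ of $K^\times$; the content is this dp-rank computation, in particular combining the periodicity supplied by Fact~\ref{fact:Z} on the value group with the finiteness of the residue field into one modulus.
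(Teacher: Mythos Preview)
Your reduction to a ``local periodicity lemma'' is the right shape, and in fact is essentially what the paper proves. But the argument you give for that lemma rests on structural claims about $(K,v)$ that are false in the generality of the theorem. You assert that an infinite residue field forces $\dprk(K,v)\geq 2$ and hence that $(K,v)$ is elementarily equivalent to a finite extension of $\Q_p$; but the paper explicitly lists $\mathbb{C}((t))$, $\R((t))$, and $\Q_p((t))$ as instances to which the theorem applies, and all of these are dp-minimal discretely valued fields with infinite residue field. Consequently your claims that $\bigcap_n P_n=\{1\}$ and that the residue field is finite both fail (in $\mathbb{C}((t))$ every unit is an $n$th power for all $n$, so $\bigcap_n P_n$ is the full unit group), and these were precisely the two ingredients feeding your depth-two indiscernibility pattern. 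The characteristic-$p$ exclusion via Artin--Schreier is fine, but it is not needed.

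The paper avoids all of this by working with the \emph{local multiplicative stabilizer} $G=\{g\in K^\times:\exists k\ B(k)\cap gX=B(k)\cap X\}$, which is a definable subgroup. The key input is not a hand-built dp-pattern but the ready-made consequence of dp-minimality recorded as Fact~\ref{fact:germ} (from \cite{JaSi-dp}): any definable family of subsets of $K$ has only finitely many germs at a point. Applied to $(gX:g\in K^\times)$ this gives $[K^\times:G]<\infty$ immediately; the same fact, applied to the cosets $(aP_n)$, shows $[K^\times:P_n]<\infty$ directly (Lemma~\ref{lem:pgroup}), without any appeal to the classification of dp-minimal fields. Then $P_n\subseteq G$ for some $n$ by Fact~\ref{fact:group}, and a short explicit computation with a uniformizer $t$ upgrades the pointwise inclusion $P_n\subseteq G$ to a uniform radius. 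Your proposed contradiction argument is trying to reinvent Fact~\ref{fact:germ} in this special case; even if the flawed structural hypotheses were removed, the clean route is to quote that fact and use the stabilizer.
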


\noindent
Furthermore if $L$ is a dp-minimal field then the valued field $L((t))$ of Laurent series is dp-minimal~\cite{Johnson}.
In particular Theorem~\ref{thm:discrete-val} applies to dp-minimal expansions of $\K((t))$ for $\K$ a characteristic zero local field.
We gather some basic lemmas.

\begin{Fact}
\label{fact:dp-field}
Suppose $\sK$ is dp-minimal and $X$ is a $\sK$-definable subset of $K$.
Then $X$ is the union of a definable open set and a finite set.
Furthermore the closure of $X$ in $K$ is the union of $X$ together with a finite set.
\end{Fact}

\noindent
The first claim of Fact~\ref{fact:dp-field} is \cite[Corollary 4.7]{JSW}.
The second claim is an easy consequence of the first.

\begin{Fact}
\label{fact:group}
Let $(M,+)$ be a non-singular abelian group.
Then any finite index subgroup of $(M,+)$ is a finite union of cosets of $nM$ for some $n$.
\end{Fact}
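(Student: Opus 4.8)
The plan is to reduce everything to a single application of non-singularity. Write $H$ for the given finite-index subgroup of $(M,+)$ and set $m = [M:H]$. The first step is to observe that $M/H$ is a finite abelian group of order $m$, so by Lagrange every element of $M/H$ is killed by $m$; in additive notation, $m(a+H) = H$ for every $a \in M$, which is to say $mM \subseteq H$. This is the only genuinely group-theoretic input, and it is immediate.

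Next I would invoke non-singularity of $(M,+)$, which gives $[M:mM] < \aleph_0$. Since $mM$ is a subgroup of $M$ contained in $H$, the group $H$ is a union of cosets of $mM$, and the number of such cosets is $[H:mM] \le [M:mM] < \aleph_0$. Taking $n = m$ then exhibits $H$ as a finite union of cosets of $nM$, which is exactly the assertion. The degenerate case $m = 1$ (i.e. $H = M$) is covered, as $M$ is a single coset of $1 \cdot M$.

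I do not expect any obstacle here: the content is simply that the exponent of the finite quotient $M/H$ divides its order $m$, so that $mM \subseteq H$, together with the fact that non-singularity is precisely the hypothesis guaranteeing that the resulting collection of $mM$-cosets is finite rather than merely a (possibly infinite) union of cosets. If one wanted, one could instead take $n$ to be the exponent of $M/H$, which divides $m$, but this refinement is unnecessary for the statement.
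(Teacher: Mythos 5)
Your proof is correct and follows essentially the same route as the paper: exhibit an $n$ with $nM \subseteq H$ and then invoke non-singularity to conclude that $H$ is a finite union of cosets of $nM$. The only difference is cosmetic — the paper obtains $(m!)M \subseteq H$ by a pigeonhole argument on the multiples $a, 2a, \dots, (m+1)a$ modulo $H$, whereas your appeal to Lagrange in the finite quotient $M/H$ gives the sharper containment $mM \subseteq H$ directly.
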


\begin{proof}
Suppose $H$ is an index $m$ subgroup of $(M,+)$.
We show that $(m!)M$ is a subgroup of $H$ and it follows by non-singularity that $H$ is a finite union of cosets of $(m!)H$.
Fix $a \in M$.
There are $1 \leq i < j \leq m + 1$ such that $ia,ja$ lie in the same coset of $H$.
So $(j - i)a$ is in $H$.
As $j - i$ divides $m!$ we have $(m!)a \in H$.
\end{proof}

\noindent
Fact~\ref{fact:germ} is a special case of \cite[Lemma 3.5]{JSW}.
We say that $X,X' \subseteq K$ have the same germ at $a \in K$ if $B(a,k) \cap X = B(a,k) \cap X'$ for some $k$.
A family $\Cal X$ of subsets of $X$ has only finitely many germs at $a$ if there are $X_1,\ldots,X_n \in \Cal X$ such that every $X \in \Cal X$ has the same germ at $a$ as some $X_i$.

\begin{Fact}
\label{fact:germ}
Suppose $\sK$ is dp-minimal.
Then any $\sK$-definable family of subsets of $K$ has only finitely many germs at any $a \in K$.
\end{Fact}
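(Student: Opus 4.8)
The plan is to run the standard indiscernible-sequence argument for dp-minimal theories; this is precisely \cite[Lemma 3.5]{JaSi-dp}, which one could simply cite, but here is the shape of the proof. Fix a formula $\varphi(x,y)$ in the language of $\sK$ with $|x| = 1$, write $X_b := \{ c \in K : \sK \models \varphi(c,b)\}$ for parameter tuples $b$, and fix $a \in K$ (every $\sK$-definable family of subsets of $K$ is of this form). Suppose toward a contradiction that the family $(X_b)_b$ has infinitely many germs at $a$; recall that $X_b$ and $X_{b'}$ have distinct germs at $a$ exactly when for every $k$ there is $c \in B(a,k) \cap (X_b \triangle X_{b'})$. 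First I would pass to a sufficiently saturated $\mathbb{K} \succ \sK$ and, by compactness together with Ramsey, extract an infinite sequence $(b_i : i < \omega)$, indiscernible over $a$, such that $X_{b_i}$ and $X_{b_j}$ have distinct germs at $a$ whenever $i \neq j$.

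Next I would use this to build the second row of a depth-$2$ pattern converging to $a$. Using indiscernibility of $(b_i)_i$ and a pigeonhole argument on which side of the symmetric difference witnesses cofinally lie, one produces a sequence $(c_k : k < \omega)$ with $v(c_k - a) \to \infty$ — so $c_k \to a$ in the valuation topology — whose membership in the sets $X_{b_i}$ is genuinely two-dimensional, in the sense that the truth value of $\varphi(c_k, b_i)$ is determined neither by $k$ alone nor by $i$ alone. After a further standard extraction one may take $(c_k)_k$ and $(b_i)_i$ to be mutually indiscernible over $a$ while retaining this interleaving; equivalently, the formulas $v(x-a) \geq v(c_k - a)$, with parameter row $(c_k)_k$, and $\varphi(x, b_i)$, with parameter row $(b_i)_i$, form an ict-pattern of depth $2$ over $a$. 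Hence the two mutually indiscernible sequences $(c_k)_k$ and $(b_i)_i$ cannot both be indiscernible over the element realizing the middle of the pattern, contradicting $\dprk(\sK) \leq 1$ and proving the Fact.

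The genuinely delicate step is the extraction in the previous paragraph: ``distinct germs'' is an infinitary condition at each scale, and turning it into a single pair of mutually indiscernible parameter rows that carries a depth-$2$ pattern is exactly where dp-minimality is used. That bookkeeping is the main obstacle, and it is carried out in detail in \cite[Lemma 3.5]{JaSi-dp}.
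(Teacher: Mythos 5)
The paper does not actually prove this Fact --- it is stated as a special case of \cite[Lemma 3.5]{JaSi-dp} and left at that --- and your proposal correctly identifies that citation and sketches the standard argument behind it. The sketch is sound (the nested ball formulas $v(x-a)\geq v(c_k-a)$ do not literally isolate single indices as a strict ict-pattern would require, but you immediately fall back on the mutually-indiscernible-sequences criterion for $\dprk \geq 2$, which is the paper's own definition of dp-rank and is the right way to conclude), so this matches the paper's approach.
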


\begin{Lem}
\label{lem:pgroup}
Suppose $K$ is dp-minimal.
Then $P_n$ is finite index in $K^\times$ for any $n \geq 1$ and any finite index subgroup of $K^\times$ is a finite union of cosets of some $P_n$.
\end{Lem}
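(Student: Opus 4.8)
The plan is to derive the second assertion from the first. Granting that every $P_n=(K^\times)^n$ has finite index in $K^\times$, the abelian group $K^\times$ is non-singular in the sense of Fact~\ref{fact:group} — written multiplicatively, the role of $nM$ there is played by $P_n$ — so Fact~\ref{fact:group} says at once that any finite index subgroup of $K^\times$ is a finite union of cosets of some $P_n$. Thus everything reduces to proving $[K^\times:P_n]<\infty$ for every $n\geq 1$.

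For this I would invoke the classification of dp-minimal fields. If $K$ is algebraically closed then $P_n=K^\times$; if $K$ is real closed then $P_n$ is $K^\times$ or the positive cone, so $[K^\times:P_n]\leq 2$. Otherwise, by Johnson's theorem \cite{Jo-canonical} $K$ carries a nontrivial definable V-topology, and since the archimedean case would force $K$ to be real or algebraically closed, this topology is that of a definable valuation $w$; adding $w$ does not change dp-rank, so $(K,w)$ is a dp-minimal valued field, with valuation ring $\mathcal O$, maximal ideal $\mathfrak m$, residue field $k$ and value group $\Gamma$. From the exact sequence $1\to\mathcal O^\times\to K^\times\to\Gamma\to 1$ (the third arrow being $w$), the snake lemma for multiplication by $n$ gives $[K^\times:P_n]\leq[\mathcal O^\times:(\mathcal O^\times)^n]\cdot[\Gamma:n\Gamma]$; here $\Gamma$ is a dp-minimal, hence non-singular, ordered abelian group, so $[\Gamma:n\Gamma]<\infty$, while $[\mathcal O^\times:(\mathcal O^\times)^n]<\infty$ by the structure theory of dp-minimal valued fields \cite{JaSi-dp}. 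The mechanism behind this last finiteness is that the residue field $k$ is again dp-minimal, so $[k^\times:(k^\times)^n]<\infty$ by the same lemma (the induction terminating because the classification presents dp-minimal fields as a bounded tower over $\mathrm{ACF}$, $\mathrm{RCF}$ or a finite field), together with finiteness of the contribution of the reduction kernel $1+\mathfrak m$: when the residue characteristic is prime to $n$, $x\mapsto x^n$ is a bijection of $1+\mathfrak m$ by Hensel, and in the remaining case the tameness of $(K,w)$ — algebraic maximality, Kaplansky conditions, finite ramification — keeps $[1+\mathfrak m:(1+\mathfrak m)^n]$ finite.

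The main obstacle is exactly this last point, the finiteness of $[\mathcal O^\times:(\mathcal O^\times)^n]$ when the residue characteristic divides $n$. It is genuinely false for wild henselian fields: in $(\widehat{\Q_p^{\mathrm{nr}}},w)$ — a henselian, discretely valued field with dp-minimal (algebraically closed) residue field — the group $1+p\mathcal O$ modulo $p$-th powers is infinite, and its two residue-level coordinates even give a depth-$2$ ICT pattern, so $(\widehat{\Q_p^{\mathrm{nr}}},w)$ is not dp-minimal; hence some genuine input on dp-minimal valued fields is unavoidable here. One could instead aim for a direct contradiction, extracting from infinitely many cosets of $P_n$ inside $1+\mathfrak m$ one array direction from the residue-level coordinate and a second from a deeper coordinate; but making this rigorous essentially reproves the relevant part of the classification, so citing \cite{Jo-canonical} and \cite{JaSi-dp} as above is the more economical route.
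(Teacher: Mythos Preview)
Your reduction of the second claim to the first via Fact~\ref{fact:group} is exactly what the paper does. The divergence is entirely in the first claim.

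The paper's argument for $[K^\times:P_n]<\infty$ is two lines and uses none of the classification machinery. Recall the standing hypotheses of this section: $K$ already comes equipped with a nontrivial discrete valuation $v:K^\times\to\Z$, and $\sK$ expands $(K,v)$. So there is no need to invoke Johnson to produce a topology. One simply observes that $0$ is an accumulation point of $P_n$ in the $v$-topology (e.g.\ $t^{nk}\to 0$ for any uniformizer $t$), hence $0$ is an accumulation point of every coset $aP_n$. Distinct cosets are disjoint, so distinct cosets have distinct germs at $0$. Now Fact~\ref{fact:germ} says the definable family $(aP_n:a\in K^\times)$ has only finitely many germs at $0$; therefore there are only finitely many cosets.

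By contrast, your route through the classification, the snake lemma, and the structure of $\mathcal O^\times$ is both far heavier and, as you yourself flag, incomplete: the step $[\mathcal O^\times:(\mathcal O^\times)^n]<\infty$ when the residue characteristic divides $n$ is not established, and ``tameness \ldots\ keeps it finite'' is a hope rather than an argument. The example you cite shows this step genuinely needs dp-minimality as input, but you have not supplied that input. More to the point, the whole detour is unnecessary once you notice that the ambient discrete valuation and Fact~\ref{fact:germ} already do the job directly. The lesson here is to read the section's standing assumptions before reaching for global structure theorems.
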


\begin{proof}
The second claim follows from the first claim and Fact~\ref{fact:group}.
We prove the first claim.
Fix $n \geq 1$.
It is easy to see that $0$ is an accumulation point of $P_n$, so $0$ is an accumulation point of $aP_n$ for each $a \in K^\times$.
Now apply Fact~\ref{fact:germ}.
\end{proof}

\noindent
We now prove Theorem~\ref{thm:discrete-val}.

\begin{proof}
After possibly translating we suppose $a = 0$.
If $0$ is not an accumulation point of $X$ then we take $Y = \emptyset$.
Suppose $0$ is an accumulation point of $X$.
Let $G$ be the set of $a \in K^\times$ such that $B(k) \cap aX = B(k) \cap X$ for some $k$.
Then $G$ is a $\sK$-definable subgroup of $K^\times$, the local multiplicative stabilizer of $X$ at $0$.
By Fact~\ref{fact:germ} the family $( aX : a \in K^\times )$ has finitely many germs at $0$.
It follows that $|K^\times/G| < \aleph_0$.
Applying Lemma~\ref{lem:pgroup} we get $n$ such that $P_n \subseteq G$.
\medskip

\noindent
Fix $t \in K$ such that $v(t) = 1$, so $v(t^n) = n$.
Fix $r \in \Z$ such that $B(r) \cap t^nX = B(r) \cap X$.
Fix representatives $\beta_1,\ldots,\beta_l \in X$ of the cosets of $P_n$ such that $0$ is an accumulation point of each $\beta_i P_n \cap X$ and set $m_i := v(\beta_i) > r$ for all $i$.
We show that
$$ B(m_i) \cap \beta_i P_n \cap X = B(m_i) \cap \beta_i P_n \quad \text{for all } \quad 1 \leq i \leq l. $$
It follows that $X$ and $\beta_1 P_n \cup \ldots \cup \beta_l P_n$ have the same germ at $0$.
\medskip

\noindent
Fix $1 \leq i \leq l$ and set $m := m_i$ and $\beta := \beta_i$.
Now observe that multiplication by $t^n$ maps $\beta P_n$ to $\beta P_n$ bijectively and maps $v^{-1}(j) \cap X$ to $v^{-1}(j + n) \cap X$  bijectively for all $j \geq m$.
So it suffices to show that $v^{-1}(m) \cap \beta P_n \cap X = v^{-1}(m) \cap \beta P_n$.
\medskip

\noindent
We have $v^{-1}(m) \cap \beta P_n = \{ a\beta : a \in P_n, v(a) = 0 \}$.
We fix $a \in P_n$ such that $v(a) = 0$ and show that $a\beta \in X$.
As $a \in P_n$ and $v(a) = 0$ multiplication by $a$ maps $v^{-1}(k) \cap X$ to itself bijectively for all sufficiently large $k$.
Fix such a $k \geq m$ and such that $k \in n\Z + m$.
Let $j = k - m$ and observe that $t^j$  is a power of $t^n$.
So multiplication by $p^j$ gives a bijection between $v^{-1}(m) \cap \beta P_n \cap X$ and $v^{-1}(k) \cap \beta P_n \cap X$.
It follows that multiplication by $t^{-j} a t^{j} = a$ gives a bijection from $v^{-1}(m) \cap \beta P_n \cap X$ to itself.
So $a\beta$ is an element of $X$.
\end{proof}

\begin{Thm}
\label{thm:padic}
Let $\K$ be a finite extension of $\Q_p$ for some prime $p$.
Then any dp-minimal expansion of $\K$ is $\K$-minimal.
\end{Thm}

\begin{proof}
Let $V$ be the valuation ring of the $p$-adic valuation on $\K$, recall that $V$ is compact.
Suppose that $X \subseteq \K$ is definable in a dp-minimal expansion of $\K$.
As $\K = V \cup V^{-1}$ we may suppose that $X\subseteq V$.
Theorem~\ref{thm:discrete-val} shows that for every $a \in V$ there is $k_a$ such that $B(a,k_a) \cap X$ is $\K$-definable.
As $V$ is compact there is finite $A \subseteq V$ such that $\{ B(a,k_a) : a \in A \}$ covers $V$.
So $X = \bigcup_{a \in A} (B(a,k_a) \cap X)$ is $\K$-definable.
\end{proof}

\noindent
The proof of Theorem~\ref{thm:padic} applies compactness of the valuation ring in a crucial way and so does not extend to elementary extensions of $\sK$.
So we cannot show that $\sK$ is $P$-minimal.

\begin{Ques}
\label{ques:p-adic}
Is every dp-minimal expansion of $\Q_p$ P-minimal?
Equivalently: is the theory of every dp-minimal expansion of $\Q_p$ $\mathrm{Th}(\Q_p)$-minimal?
\end{Ques}

\noindent
It is easy to see that Question~\ref{ques:p-adic} is equivalent to: Suppose $\sQ$ is a dp-minimal expansion of $\Q_p$, $\sQ'$ is an elementary extension of $\sQ$, and $X$ is a $\sQ'$-definable clopen subset of the valuation ring of $\sQ'$.
Must $X$ be a finite union of balls?
\medskip

\noindent
Recall our standing assumption that $(K,v)$ is a discrete Henselian valued field.

\begin{Thm}
\label{thm:padic-arch}
Suppose that $K$ is characteristic zero with finite residue field and $\sK$ is a dp-minimal expansion of $K$.
Then $\sK$ is weakly $K$-minimal.
\end{Thm}

\noindent
Let $\qalg$ be the algebraic closure of $\Q$ in $\Q_p$.
Then $\qalg$ is $p$-adically closed and is hence an elementary subfield of $\Q_p$.
So any dp-minimal expansion of $\qalg$ is weakly $\qalg$-minimal.
We now prove Theorem~\ref{thm:padic-arch}.

\begin{proof}
By Fact~\ref{fact:p-adic} we may suppose that $K$ is an elementary subfield of a finite extension $\K$ of $\Q_p$, where $p$ is the residue characteristic of $(K,v)$.
Let $\Sa F$ be a sufficiently saturated elementary extension of $\Sa K$ with underlying field $F$.
By saturation we may suppose that $\K$ is an elementary subfield of $F$.
We first realize $\K$ as an $F^{\mathrm{Sh}}$-definable set of imaginaries.
Let $u : F^\times \to \Gamma$ be the $p$-adic valuation on $F$, so $u$ is $F$-definable.
Note that $\Z$ is the minimal non-trivial convex subgroup of $\Gamma$.
Hence $\Z$ is $F^\mathrm{Sh}$-definable.
Let $w : F^\times \to \Gamma/\Z$ be the composition of $u$ with the quotient $\Gamma \to \Gamma/\Z$.
We equip $\Gamma/\Z$ with a group order by declaring $a + \Z \leq b + \Z$ when $a \leq b$, so $w$ is an externally definable valuation on $K$.
Let $W$ be the valuation ring of $w$ and $\mfrak$ be the  maximal ideal of $W$.
Then $W$ is the set of $\alpha \in K$ such that $u(\alpha) \geq m$ for some $m \in \Z$ and $\mfrak$ is the set of $\alpha \in K$ such that $u(\alpha) > m$ for all $m \in \Z$.
It is easy to see that for every $\alpha \in W$ there is a unique $\alpha^* \in \K$ such that $\alpha - \alpha^* \in \mfrak$.
We identify $W/\mfrak$ with $\K$ so that the residue map $\st : W \to \K$ is the usual standard part map.
Then $\K$ is an $F^{\mathrm{Sh}}$-definable set of imaginaries.
Let $\pmb{\Sa K}$ be the structure induced on $\K$ by $\Sh F$.
By Fact~\ref{fact:shelah} $\Sh F$ is dp-minimal, it follows that $\pmb{\Sa K}$ is dp-minimal.
By Theorem~\ref{thm:padic} $\pmb{\Sa K}$ is $\K$-minimal.
\medskip

\noindent
Suppose $X$ is a $\Sa K$-definable set.
We show that $X$ is externally definable in $K$.
By Fact~\ref{fact:dp-field} we may suppose that $X$ is closed in $K$.
Let $X'$ be the subset of $L$ defined by any formula defining $X$ and let $Y = \st(X' \cap V)$.
An easy saturation argument shows that $Y$ is the closure of $X$ in $\K$, so $X = Y \cap K$.
Note that $Y$ is $\pmb{\Sa K}$-definable, hence $Y$ is $\K$-definable.
So $X$ is externally definable in $K$.
\end{proof}

\noindent
We discuss the relationship between $\Sa K$ and $\pmb{\Sa K}$ further in Section~\ref{section:p adic completion}.

\section{Ordered abelian groups with finitely many convex subgroups}
\label{section:convex}

\noindent 
Throughout this section $(M,+,<)$ is an ordered abelian group and $\sM$ is an expansion of $(M,+,<)$.
Section~\ref{section:convex} is devoted to the proof of Theorem~\ref{thm:cc}.

\begin{Thm}
\label{thm:cc}
Suppose that $\sM$ defines only finitely many convex subgroups of $(M,+,<)$.
Then $\sM$ is dp-minimal if and only if every unary definable subset of every elementary extension $\mathscr{N}$ of $\sM$ is a finite union of sets of the form $C \cap (a + nN)$ for convex $C \subseteq N$, $n$, and $a \in M$.
\end{Thm}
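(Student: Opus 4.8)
\noindent\emph{Plan of proof.} The plan is to prove the two directions separately, with essentially all of the work in the forward direction, which refines the argument already used for Theorem~\ref{thm:discrete-val}.

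\noindent\emph{Backward direction.} A convex subset of a linear order is externally definable, and each set $a + nN$ is outright definable in $(N,+,<)$, so every finite union of sets $C \cap (a+nN)$ is externally definable in the ordered group reduct. A routine compactness argument then shows that the stated hypothesis implies $\mathrm{Th}(\sM)$ is weakly $\mathrm{Th}(M,+,<)$-minimal. Applying the hypothesis to definable subgroups such as $pM$ one checks that $(M,+,<)$ must be non-singular --- otherwise $M/pM$ would be infinite for some prime $p$ and then the definable set $pM$ fails to be a finite union of sets $C \cap (a+nN)$ --- so $(M,+,<)$ is dp-minimal, and Proposition~\ref{prop:weak-to-nip} gives that $\sM$ is dp-minimal.

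\noindent\emph{Forward direction.} Assume $\sM$ is dp-minimal, so that $(M,+,<)$ is non-singular, and let $\{0\} = H_0 \subset H_1 \subset \cdots \subset H_d = M$ list the finitely many convex subgroups $\sM$ defines. Since every elementary extension of $\sM$ satisfies the same hypotheses, it suffices to show that every $\sM$-definable $X \subseteq M$ is a finite union of sets $C \cap (a+nM)$. The heart of the matter is the local statement, Theorem~\ref{thm:main-eq}: there are $n \geq 1$ and an $\sM$-definable convex equivalence relation $E$ on $M$ with only finitely many finite classes such that on each $E$-class $X$ coincides with a finite union of cosets of $nM$. I would prove this by transporting the proof of Theorem~\ref{thm:discrete-val} from the multiplicative to the additive setting. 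For $a \in M$ let $S_a$ be the $\sM$-definable subgroup of all $b$ such that $X$ and $b + X$ agree on $a + H_i$ for some $i$, the local additive stabilizer of $X$ at $a$. An analogue of Fact~\ref{fact:germ} for ordered abelian groups (cf.\ \cite[Lemma 3.5]{JaSi-dp}) shows that the definable family $( b + X : b \in M )$ has only finitely many germs at $a$, whence $S_a$ has finite index; by Fact~\ref{fact:group} there is $n$ with $nM \subseteq S_a$, and a compactness argument makes $n$ uniform in $a$. I would then define $E$ by identifying $a$ with $a'$ when $a - a' \in S_a$ and, near both points, $X$ agrees with the same translate of a fixed union of cosets of $nM$; dp-minimality is what forces $E$ to have only finitely many finite classes. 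The delicate point here is the bookkeeping of germs relative to the whole chain $H_0 \subset \cdots \subset H_d$ and the verification that the relation so obtained is genuinely convex.

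\noindent\emph{Globalization.} Given $E$ and $n$ as above, for $a \in M$ let $f(a)$ be the set of those cosets of $nM$ whose union agrees with $X$ on the $E$-class of $a$; then $f$ is an $\sM$-definable function with finite range and $X = \bigcup_P \bigl( f^{-1}(P) \cap \pi^{-1}(P) \bigr)$, the union ranging over the finitely many values $P$ of $f$ and $\pi : M \to M/nM$ being the quotient map. Since each $\pi^{-1}(P)$ is a union of cosets of $nM$, one is reduced to showing that $f^{-1}(P)$, a definable union of $E$-classes, is a finite union of sets $C \cap (a+n'M)$; this is where the finiteness of the chain $H_0 \subset \cdots \subset H_d$ is used, via an induction on $d$. \emph{The main obstacle} I expect to be Theorem~\ref{thm:main-eq} itself: organizing the local stabilizers and germs correctly against a chain of convex subgroups and extracting the coset description from dp-minimality. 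Given that, deducing the global form should be the less substantial part.
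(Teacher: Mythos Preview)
Your backward direction is fine and matches the paper's Proposition~\ref{prop:cc-converse} (the paper cites \cite{point-wagner} for non-singularity, but your direct argument via $pM$ works too).

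The forward direction has a genuine gap in the local step. Your stabilizer $S_a$ is built from germs relative to the cosets $a+H_i$ of the finitely many \emph{definable} convex subgroups, and you invoke an analogue of Fact~\ref{fact:germ} to force $S_a$ to have finite index. But this filter of neighbourhoods is far too coarse. In the archimedean case the chain is $\{0\}\subset M$, so your $S_a$ collapses to the \emph{global} additive stabilizer $\{b: b+X=X\}$, which has no reason to be of finite index (take $M=\mathbb{R}$ and $X=[0,\infty)$: the stabilizer is $\{0\}$). The germ lemma you cite from \cite{JaSi-dp} needs an honest system of arbitrarily small neighbourhoods, which in the ordered setting means \emph{intervals}, not cosets of the $H_i$. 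The paper does not transport the valued-field stabilizer argument at all; instead it proves Lemma~\ref{lem:local-behavior} (using Goodrick's nowhere-dense lemma in the dense case and Fact~\ref{fact:Z} together with the convex copy of $\mathbb{Z}$ in the discrete case) to get an $n$ and interval-local agreement with a union of cosets of $nM$, and then \emph{explicitly} defines $E$ in terms of the $(M,+,<)$-definable subgroup $R_n$ (the maximal $n$-regular convex subgroup). The verification that $E$ is a convex equivalence relation with finitely many finite classes is the substance of Theorem~\ref{thm:gen-convex}, and $R_n$ is exactly what makes the bookkeeping work; your proposal has no substitute for it.

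Your globalization sketch is headed in the right direction but underspecified. The paper's induction is not on the full chain but on the number of non-trivial definable convex subgroups: one takes $H$ to be the \emph{minimal} non-trivial definable convex subgroup (automatically non-valuational), proves a dedicated Proposition~\ref{prop:going-down} showing that every definable set decomposes into cnc sets together with pullbacks $\pi^{-1}(Y)\cap(b+nM)$ from $N=M/H$, and then applies the inductive hypothesis to $N$. The key extra ingredient you are missing here is Proposition~\ref{prop:nonval1}, which shows that a non-valuational convex subgroup cannot support a definable convex equivalence relation with infinitely many infinite classes inside a single coset; this is what converts the $E$ of Theorem~\ref{thm:gen-convex} into a genuine cnc decomposition on each coset of $H$. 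Your function $f$ with finite range is morally the same device, but without the non-valuational analysis you have no way to see that $f^{-1}(P)$ (a union of $E$-classes) is itself a finite union of cnc sets.
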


\noindent
In particular if $(M,+)$ is divisible and $\Sa M$ defines only finitely many convex subgroups then $\Sa M$ is weakly o-minimal if and only if $\Sa M$ is dp-minimal.
Theorem~\ref{thm:cc} and Fact~\ref{fact:dp-oag} below show that if $(M,+)$ is non-singular and $(M,+,<)$ has only finitely many convex subgroups then $\sM$ is dp-minimal if and only if $\mathrm{Th}(\sM)$ is weakly $\mathrm{Th}(M,+,<)$-minimal.
\medskip

\noindent
There are dp-minimal expansions of divisible ordered abelian groups which are not weakly o-minimal and hence fail the hypothesis of Theorem~\ref{thm:cc}.
In particular there are dp-minimal ordered fields which are not weakly o-minimal.
A weakly o-minimal field is real closed by~\cite{MaMA-weakly}, the ordered field $\R((\Gamma))$ of Hahn series with exponents in a non-singular abelian group $\Gamma$ is dp-minimal~\cite{Jo-canonical}, and $\R((\Gamma))$ is real closed if and only if $\Gamma$ is divisible, see \cite[Theorem 4.3.7]{EP-value}.
In particular the field $\R((t))$ of Laurent series is dp-minimal and not weakly o-minimal.
\medskip

\noindent
We first gather some results and definitions concerning ordered abelian groups.
\subsection{Preliminaries on ordered abelian groups}
We make frequent use of the elementary facts that $(M,+,<)$ is either dense or discrete, if $(M,+,<)$ is discrete then $(M,+,<)$ has a minimal positive element which we denote by $1_M$, and $\{ m1_M : m \in \Z \}$ is the minimal convex subgroup of $(M,+,<)$ which is not $\{0\}$.
A subset $X$ of $M$ is \textbf{convex} if whenever $a,a' \in X$ and $a < b < a'$ then $b \in X$.
The convex hull of $X \subseteq M$ is the set of $b \in M$ such that $a \leq b \leq a'$ for some $a,a' \in X$.
A subgroup of $(M,+)$ is non-trivial if it is not $\{0\}$ or $M$.
We will make use of the fact that the convex subgroups of $(M,+,<)$ form a chain under inclusion.
\medskip

\noindent
For our purposes a \textbf{cut} in $M$ is a downwards closed subset of $M$ which either does not have a supremum or is of the form $(\infty,a]$ for some $a \in M$.
We let $\Cal C(M)$ be the set of cuts in $M$, order $\Cal C(M)$ under inclusion, and equip $\Cal C(M)$ with the resulting order topology.
We identify each $a \in M$ with the cut $(\infty,a]$ so $\mathcal{C}(M)$ is the order-completion of $(M,<)$.
A cut $C$ \textbf{lies in} a convex $D \subseteq M$ if $D$ intersects both $C$ and $M \setminus C$.
A family of cuts is nowhere dense if it is nowhere dense in $\Cal C(M)$.

\medskip
\noindent 
A \textbf{convex equivalence relation} $E$ on $M$ is an equivalence relation with convex equivalence classes.
We let $E_b$ be the $E$-class of $b \in M$.
There is a canonical linear order on $M/E$ given by declaring $E_a < E_b$ when every element of $E_a$ is strictly less than every element of $E_b$.
If $H$ is a convex subgroup of $(M,+,<)$ then equivalence modulo $H$ is a convex equivalence relation and the linear order on $M/H$ is a group order.
So we always regard the quotient of $(M,+,<)$ by a convex subgroup as an ordered abelian group.
\medskip

\noindent
A \textbf{cnc subset} of $M$ is a set of the form $C \cap X$ for \textbf{c}onvex $C \subseteq M$ and a \textbf{c}oset $X$ of $nM$ for some $n$.
Observe that every cnc subset of $M$ is definable in $(M,+,<)^{\mathrm{Sh}}$.
We make use of the following fact, whose verification we leave to the reader.

\begin{Fact}
\label{fact:cc-boolean}
Suppose $(M,+)$ is non-singular.
Then the collection of finite unions of cnc subsets of $M$ forms a boolean algebra.
\end{Fact}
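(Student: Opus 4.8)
The plan is to verify that the finite unions of cnc subsets of $M$ are closed under complementation; closure under finite unions is immediate from the definition, and closure under finite intersections then follows by De Morgan, so these three facts together give the boolean algebra. Since complementation distributes over finite unions only up to intersections, the cleanest route is to first check that the intersection of two cnc sets is again cnc, then handle the complement of a single cnc set, and finally assemble the general case.

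First I would observe that if $C_1 \cap (a_1 + n_1 M)$ and $C_2 \cap (a_2 + n_2 M)$ are cnc, then $C_1 \cap C_2$ is convex (an intersection of convex sets), and $(a_1 + n_1 M) \cap (a_2 + n_2 M)$ is either empty or a coset of $(n_1 M) \cap (n_2 M)$. Here non-singularity enters: by Fact~\ref{fact:group} applied to the finite-index subgroup $n_1 M \cap n_2 M$ (it has index at most $|M/n_1 M|\cdot|M/n_2 M| < \aleph_0$), this intersection is a finite union of cosets of $nM$ for a suitable $n$ (e.g.\ $n$ a multiple of both $n_1$ and $n_2$ works directly: $nM \subseteq n_1 M \cap n_2 M$). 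Distributing the convex factor over this finite union shows the intersection of two cnc sets is a finite union of cnc sets.

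Next I would compute the complement of a single cnc set $C \cap (a + nM)$. Its complement is $(M \setminus C) \cup (M \setminus (a + nM))$. The complement of a convex set in a linear order is a union of at most two convex sets (an initial and a final segment), each of which is $C' \cap (0 + 1\cdot M)$, hence cnc with $n = 1$. The complement of the coset $a + nM$ is the union of the finitely many other cosets of $nM$ — here non-singularity is again used, to know there are only finitely many — and each such coset $b + nM = M \cap (b + nM)$ is cnc. So the complement of a cnc set is a finite union of cnc sets. For the general case, the complement of $\bigcup_{i=1}^k Y_i$ with each $Y_i$ cnc is $\bigcap_{i=1}^k (M \setminus Y_i)$, a finite intersection of finite unions of cnc sets; expanding this intersection into a union of intersections and applying the two-cnc-intersection step repeatedly (by induction on $k$) shows it is again a finite union of cnc sets.

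The only real subtlety — and the place where the hypothesis is essential rather than cosmetic — is the appeal to non-singularity to guarantee that $nM$ has finite index and that intersections of such subgroups again contain some $nM$ of finite index; without it the complement of a coset need not be a finite union of cosets of subgroups of the required form. Everything else is bookkeeping with convex sets in a linear order, which is why the statement is left to the reader.
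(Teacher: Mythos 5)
Your proof is correct; the paper explicitly leaves this verification to the reader, and your route---closure under unions is immediate, pairwise intersections of cnc sets reduce to finite unions of cnc sets because $nM \subseteq n_1M \cap n_2M$ for $n$ a common multiple of $n_1,n_2$ and $nM$ has finite index by non-singularity (this is where the hypothesis is used, as you say, and one can also quote Fact~\ref{fact:group}), and complements are handled by De Morgan together with the fact that the complement of a convex set is at most two convex sets---is exactly the intended bookkeeping. The one edge case your phrasing misses is $n=0$ (which the paper does permit, e.g.\ in Theorem~\ref{thm:gen}), where $a+nM=\{a\}$ and its complement is \emph{not} a finite union of cosets of $nM$; but $M\setminus\{a\}$ is still a union of two convex sets, hence a finite union of cnc sets, so the conclusion is unaffected.
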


\noindent
See \cite{JSW} for the first claim of Fact~\ref{fact:dp-oag}.
The second claim follows from the first claim and Fact~\ref{fact:cc-boolean}

\begin{Fact}
\label{fact:dp-oag}
The ordered abelian group $(M,+,<)$ is dp-minimal if and only if $(M,+)$ is non-singular.
So if $(M,+,<)$ is dp-minimal then the collection of finite unions of cnc subsets of $M$ is a boolean algebra.
\end{Fact}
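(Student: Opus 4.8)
The plan is to isolate the one nontrivial ingredient --- the equivalence ``$(M,+,<)$ is dp-minimal $\iff$ $(M,+)$ is non-singular'' --- and then read off the boolean-algebra assertion. For the second assertion there is nothing to do beyond quoting: if $(M,+,<)$ is dp-minimal then the equivalence gives non-singularity of $(M,+)$, whereupon Fact~\ref{fact:cc-boolean} says precisely that the finite unions of cnc subsets of $M$ form a boolean algebra. So I would present the proof of Fact~\ref{fact:dp-oag} as ``the first claim is \cite{JaSi-dp}, and the second follows from it together with Fact~\ref{fact:cc-boolean}'', spending the remaining effort only on sketching why the first claim holds.

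For the forward direction of the equivalence I would argue by contraposition. Assuming $[M:pM]\geq\aleph_0$ for some prime $p$, I would pass to a sufficiently saturated elementary extension and, using that $M$ is dense or discrete with least positive element $1_M$, produce two mutually indiscernible sequences: an increasing sequence $(b_j : j<\omega)$ whose first gap $(b_0,b_1)$ contains representatives of at least two cosets of $pM$, and a sequence $(a_i : i<\omega)$ of representatives of pairwise distinct cosets of $pM$; keeping along the way an element $c$ with $b_0<c<b_1$, $c-a_0\in pM$, and $c-a_1\notin pM$. The formulas ``$x<y$'' and ``$x-y\in pM$'' then witness that neither sequence is indiscernible over $c$, so $\dprk(\mathrm{Th}(M,+,<))>1$. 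This direction is routine. For the reverse direction I would invoke quantifier elimination for ordered abelian groups --- which in the non-singular case expresses every unary set definable in any $\mathscr{N}\equiv(M,+,<)$ as a finite union of cnc subsets of $N$ --- and then bound the dp-rank by $1$, for instance by showing that a definable family of such sets has only finitely many germs along any mutually indiscernible pair, so that no two mutually indiscernible sequences can be split by a single element.

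The hard part will be exactly this last bound: a genuine dp-rank computation for non-singular ordered abelian groups, which requires the full quantifier-elimination machinery, including the behaviour of the auxiliary ``spine'' when there are infinitely many convex subgroups. Since that is the content of \cite{JaSi-dp}, I would cite it rather than reproduce it; everything else in Fact~\ref{fact:dp-oag} --- in particular the deduction of the boolean-algebra statement via Fact~\ref{fact:cc-boolean} --- is formal.
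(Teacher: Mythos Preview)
Your proposal is correct and matches the paper's own treatment exactly: the paper simply cites \cite{JaSi-dp} for the equivalence and then deduces the boolean-algebra claim from Fact~\ref{fact:cc-boolean}. Your additional sketch of the two directions of the equivalence goes beyond what the paper provides, but is consistent with the cited source.
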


\noindent
We say that $(M,+,<)$ is s $\mathbf{n}$\textbf{-regular} if every interval of cardinality at least $n$ contains an $n$-divisible element and $(M,+,<)$ is regular if it is $n$-regular for all $n$.
If $(M,+,<)$ is dense then $(M,+,<)$ is $n$-regular if and only if $nM$ is dense in $M$.
It is a theorem of Robinson and Zakon~\cite{Ro-Za} that $(M,+,<)$ is regular if and only if $(M,+,<)$ is elementarily equivalent to an archimedean ordered abelian group.
For each $n$ we let $R_n$ be the set of $a \in M$ such that every interval in $[0,|a|]$ with cardinality at least $n$ contains an $n$-divisible element.
Note that each $R_n$ is $(M,+,<)$-definable.
Fact~\ref{fact:bel} shows that $R_n$ is the maximal $n$-regular convex subgroup of $(M,+,<)$.
See e.g. \cite[Theorem 3.1]{Belegradek} for a proof of Fact~\ref{fact:bel}.

\begin{Fact}
\label{fact:bel}
Each $R_n$ is a convex subgroup of $(M,+,<)$.
\end{Fact}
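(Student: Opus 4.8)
The plan is to verify separately that $R_n$ is convex and that it is a subgroup. Convexity is immediate: the condition defining $R_n$ depends only on $|a|$; we have $0\in R_n$ (vacuously if $n\geq 2$, and since $0\in M=1\cdot M$ if $n=1$); and if $a\in R_n$ with $|b|\leq|a|$ then $[0,|b|]\subseteq[0,|a|]$, so every convex subset of $[0,|b|]$ of size $\geq n$ is also such a subset of $[0,|a|]$ and hence meets $nM$, giving $b\in R_n$. A symmetric subset of $M$ containing $0$ and closed under passing to elements of smaller absolute value is convex, so $R_n$ is convex; being also symmetric, it will be a subgroup once we know $R_n^{\geq 0}:=R_n\cap M^{\geq 0}$ is closed under addition. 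I would prove this by treating the dense and the discrete case separately.

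Suppose first $(M,+,<)$ is dense. I claim $R_n\in\{\{0\},M\}$. If $(M,+,<)$ is $n$-regular, then $nM$ is dense in $M$, hence meets every nonempty open subinterval of every $[0,|a|]$; since in a dense group every convex set with $\geq n$ elements is infinite and so contains such a subinterval, $R_n=M$. If $(M,+,<)$ is not $n$-regular, then $nM$ is not dense in $M$; being a subgroup, $nM$ then already has no arbitrarily small positive elements, so $(0,\varepsilon)\cap nM=\emptyset$ for some $\varepsilon>0$. For any $a\neq 0$ the interval $[0,|a|]$ then contains the nonempty open interval $(0,\min(\varepsilon,|a|))$, an infinite convex set disjoint from $nM$, so $a\notin R_n$ and $R_n=\{0\}$. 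In both cases $R_n$ is a convex subgroup.

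Now suppose $(M,+,<)$ is discrete; let $1_M$ be its least positive element and $Z=\Z\,1_M$ the least nonzero convex subgroup. The crucial point is the inclusion $R_n^{\geq 0}\subseteq Z+nM$. Indeed, let $a\in R_n$ with $a\geq 0$: if $a\in Z$ this is trivial, and otherwise $a>Z$, so $a>(n-1)1_M$ and $\{a,\,a-1_M,\dots,\,a-(n-1)1_M\}$ is a convex subset of $[0,a]$ with exactly $n$ elements, which therefore meets $nM$; this yields $a\equiv j\,1_M\pmod{nM}$ for some $0\leq j<n$, so $a\in Z+nM$. As $Z+nM$ is a subgroup, closure under addition follows: given $a,b\in R_n^{\geq 0}$ and a convex $I\subseteq[0,a+b]$ with $|I|\geq n$, choose $y_1<\dots<y_n$ in $I$ and set $p=y_1$; discreteness gives $y_n\geq p+(n-1)1_M$, hence $\{p,p+1_M,\dots,p+(n-1)1_M\}\subseteq I$. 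Writing $p=\min(p,a)+\bigl(p-\min(p,a)\bigr)$, the two summands lie in $[0,a]$ and $[0,b]$ respectively, hence in $R_n^{\geq 0}\subseteq Z+nM$, so $p\in Z+nM$, say $p\equiv m\,1_M\pmod{nM}$; choosing $j\in\{0,\dots,n-1\}$ with $n\mid m+j$ gives $p+j\,1_M\in nM\cap I$. Thus $I$ meets $nM$, so $a+b\in R_n$, and $R_n$ is a convex subgroup.

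The part I expect to be the main obstacle is exactly the coset-shift behind $R_n^{\geq 0}\subseteq Z+nM$ and its dense-case analogue: knowing $a\in R_n$ only directly controls intersections with $nM$ itself, whereas to understand $[0,2a]$ one must understand which coset of $nM$ is met by a convex subset of the upper half $[a,2a]$; isolating the single residue statement above is what makes the translation argument go through.
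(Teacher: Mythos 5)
Your convexity argument is fine, and your discrete case is correct and essentially optimal: the coset statement $R_n^{\geq 0}\subseteq \Z 1_M+nM$ together with the splitting $p=\min(p,a)+(p-\min(p,a))$ is exactly the right translation argument. The genuine gap is in the dense case. The dichotomy $R_n\in\{\{0\},M\}$ is false there, and so is the inference you use to obtain it: for a subgroup $G$ of a dense ordered abelian group, ``$G$ is not dense in $M$'' does \emph{not} imply ``$G$ has no arbitrarily small positive elements'' unless $M$ is archimedean. Concretely, let $M=\Z\times\Q$ with the lexicographic order (first coordinate dominant). This is a dense ordered abelian group; $2M=2\Z\times\Q$ contains $(0,q)$ for arbitrarily small positive $q$ yet is not dense, since it misses the infinite convex set $\{1\}\times(0,1)$; and one checks that $R_2=\{0\}\times\Q$, a non-trivial proper convex subgroup (every interval in $[0,(0,q)]$ lies in $\{0\}\times\Q\subseteq 2M$, while $[0,(k,q)]$ for $k\geq 1$ contains the infinite interval $\{1\}\times(0,1)$ disjoint from $2M$). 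Indeed, the sentence following Fact~\ref{fact:bel}, which describes $R_n$ as the \emph{maximal} $n$-regular convex subgroup, only has content because $R_n$ is in general strictly between $\{0\}$ and $M$.

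The dense case needs the analogue of your coset-shift rather than the dichotomy. Since in a dense group a convex set with at least two elements is infinite, $a\in R_n$ (for $a>0$) says precisely that $nM$ is dense in $[0,a]$, and closure under addition reduces to: if $nM$ is dense in $[0,a]$ and in $[0,b]$ then it is dense in $[0,a+b]$. For an interval meeting $[0,a)$ this is immediate; for $(c,d)\subseteq[a,a+b]$ one must produce $v\in nM\cap[0,a]$ and $w\in nM\cap[0,b]$ with $v+w\in(c,d)$, e.g.\ by first choosing $v\in nM$ in the infinite convex set $\{x\in[0,a]:x>c-b\}$, so that $(c-v,d-v)$ is a nonempty open subinterval of $[0,b]$, and then choosing $w\in nM$ inside it. This two-step approximation is exactly the ``which coset does the upper half see'' issue you flag at the end, and it is the step your dense case skips. (For what it is worth, the paper does not prove this fact itself but cites \cite[Theorem 3.1]{Belegradek}.)
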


\noindent In general $R_n$ could be $\{0\}$.
Lemma~\ref{lem:bel-1} rules out this situation in our case.

\begin{Lem}
\label{lem:bel-1}
If $(M,+,<)$ is discrete or $|M/nM| < \aleph_0$ then $R_n \neq \{0\}$.
\end{Lem}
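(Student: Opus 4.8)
The plan is to treat the discrete case and the dense case (where one uses $|M/nM|<\aleph_0$) separately; the latter carries the content.

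Suppose first that $(M,+,<)$ is discrete, and let $H=\{m1_M:m\in\Z\}$ be the minimal nonzero convex subgroup. I would show $H\subseteq R_n$, which gives $R_n\neq\{0\}$ since $1_M\in H$. For $a=m1_M\in H$ the interval $[0,|a|]$ equals $\{j1_M:0\le j\le|m|\}$: it lies in $H$ as $H$ is convex, and between consecutive multiples of $1_M$ there is nothing, by minimality of $1_M$. Hence any subinterval $I$ with $|I|\ge n$ has the form $\{j1_M:p\le j\le q\}$ with $q-p\ge n-1$; then $\{p,\dots,q\}$ contains a multiple $j_0$ of $n$, so $j_0 1_M\in I$ and $j_0 1_M$ is $n$-divisible in $M$ (as $n\mid j_0$). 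So $a\in R_n$. This step is routine.

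For the dense case, assume $(M,+,<)$ is dense and $|M/nM|<\aleph_0$; we may assume $n\ge2$ since $R_1=M$. The idea is to pass to the topological closure. Equip $M$ with the order topology, which makes $(M,+)$ a topological group, and put $G:=\cl(nM)$. Then $G$ is a subgroup of $M$ containing $nM$, hence $[M:G]\le|M/nM|<\aleph_0$; being a closed subgroup of finite index in a topological group, $G$ is open. So $G$ is a neighbourhood of $0$, and since $M$ is densely ordered $G$ contains $[0,\delta]$ for some $\delta>0$ in $M$. I claim $\delta\in R_n$: if $I\subseteq[0,\delta]$ is an interval with $|I|\ge n\ge2$, pick $u<v$ in $I$, so $(u,v)\subseteq I\subseteq G$ and $(u,v)$ is nonempty and open; any $w\in(u,v)$ lies in $G=\cl(nM)$, so the open neighbourhood $(u,v)$ of $w$ meets $nM$, whence $I$ contains an $n$-divisible element. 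Thus $\delta\in R_n$ and $R_n\neq\{0\}$.

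The main obstacle is the dense case, where the temptation is to argue directly that $nM$ is ``dense below some $\delta$''. One can check by pigeonhole that $nM$ is coinitial in $M_{>0}$ (if $nM\cap(0,\varepsilon_0)=\emptyset$ then $nM$, and hence each of its finitely many cosets, is $\varepsilon_0$-separated, so $(0,\varepsilon_0)$ would be finite, contradicting density); but this is not enough, because in a non-archimedean $M$ the small elements of $nM$ need not reach across larger archimedean classes, and indeed $nM$ can fail to be dense in $M$ — e.g.\ $2(\Z\times\Q)$ inside $\Z\times\Q$ with the lexicographic order. Passing to $G=\cl(nM)$ and invoking that a closed finite-index subgroup of a topological group is open sidesteps this entirely; the remaining points to verify — that $(M,+)$ with the order topology is a topological group, and that an open subgroup of a densely ordered group contains some $[0,\delta]$ with $\delta>0$ — are both straightforward.
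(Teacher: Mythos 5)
Your proof is correct and is essentially the paper's argument: in the dense case the paper observes that $M$ is a finite union of cosets of $nM$, so $nM$ is somewhere dense, and then translates by an element of $nM$ to make $nM$ dense on an interval $(-\delta,\delta)$ around $0$; your route through the open subgroup $\cl(nM)$ packages the same two steps (finite index forces an interior point, the group structure moves it to $0$) in topological-group language. The discrete case is handled identically in both, via $\{k1_M : k\in\Z\}\subseteq R_n$.
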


\begin{proof}
If $(M,+,<)$ is discrete then $\{ k1_M : k \in \Z\}$ is contained in $R_n$.
Now suppose that $(M,+,<)$ is dense and $|M/nM| < \aleph_0$.
Then $M$ is a finite union of cosets of $nM$ so $nM$ is somewhere dense.
Suppose $I$ is a nonempty open interval in which $nM$ is dense.
Fix $a \in I \cap nM$.
Let $\delta$ be a positive element of $M$ such that $J := (a - \delta, a + \delta)$ is a subset of $I$.
Then $(J \cap nM) - a$ is dense in $(-\delta,\delta)$ and $(J \cap nM) - a \subseteq nM$.
So $nM$ is dense in $(-\delta,\delta)$, so $(-\delta,\delta)$ is a subset of $R_n$.
\end{proof}

\noindent
We leave Lemma~\ref{lem:Rn-index} to the reader.

\begin{Lem}
\label{lem:Rn-index}
Suppose $I$ is an interval in $R_n$ which contains at least $n$ elements.
Then $I$ intersects $a + nR_n$ for every $a \in R_n$.
\end{Lem}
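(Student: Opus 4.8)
The statement to prove is Lemma~\ref{lem:Rn-index}: if $I \subseteq R_n$ is an interval with $|I| \geq n$, then $I$ meets $a + nR_n$ for every $a \in R_n$. The plan is to reduce to the already-established fact that $R_n$ is $n$-regular (Fact~\ref{fact:bel}) and then use a translation trick to upgrade ``$I$ contains an $n$-divisible element of $R_n$'' to ``$I$ contains an element of each coset $a + nR_n$.'' First I would note that since $R_n$ is a convex subgroup of $(M,+,<)$ and $I \subseteq R_n$, the interval $I$ is also an interval of the ordered group $(R_n,+,<)$; and by Fact~\ref{fact:bel} combined with the definition of $R_n$, the group $(R_n,+,<)$ is $n$-regular, meaning every interval of cardinality $\geq n$ inside it contains an $n$-divisible element, i.e.\ an element of $nR_n$.

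\textbf{Main step.} Fix $a \in R_n$; I want an element of $I \cap (a + nR_n)$. The idea is to translate. Consider the interval $I - a$ (equivalently, shift so that one looks at $\{x - a : x \in I\}$). This is again an interval in $R_n$ of cardinality $\geq n$, so by $n$-regularity of $R_n$ it contains some $nb$ with $b \in R_n$. Then $nb + a \in I$ and $nb + a \in a + nR_n$, which is exactly what we want. The only thing to check carefully is that the translate $I - a$ is genuinely contained in $R_n$: this holds because $a \in R_n$ and $R_n$ is a subgroup, so $R_n$ is closed under subtracting $a$, and translation by $-a$ is an order-isomorphism of $(R_n,<)$ onto itself taking intervals to intervals and preserving cardinality.

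\textbf{Anticipated obstacle.} I do not expect a serious obstacle; the lemma is essentially bookkeeping on top of Fact~\ref{fact:bel}. The one subtlety worth being careful about is the precise meaning of ``interval'': if ``interval'' is allowed to be an interval of $M$ rather than of $R_n$, one must first observe that an interval of $M$ contained in $R_n$ is the same thing as an interval of $R_n$ (convexity of $R_n$ guarantees no endpoints get lost), and that cardinality is unchanged. Once that is pinned down, the translation argument above closes the proof in two or three lines. If instead one wanted to avoid the translation language entirely, an alternative is to invoke the standard characterization of $n$-regular groups (Robinson--Zakon, cited above via \cite{Ro-Za}): in an $n$-regular group the cosets of $nR_n$ are ``dense'' among any sufficiently long interval in the sense that each of the finitely many cosets is represented; but the one-line translation argument is cleaner and self-contained.
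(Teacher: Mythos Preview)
Your argument is correct: the translation $I \mapsto I - a$ together with $n$-regularity of $R_n$ (stated just before Fact~\ref{fact:bel}) gives the result immediately, and the observation $nM \cap R_n = nR_n$ (which follows from convexity of $R_n$, since $|y| \le |ny|$) ensures the $n$-divisible element you find really lies in $nR_n$. The paper does not prove this lemma at all---it is left to the reader---so there is nothing to compare against.
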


\subsection{Right to left}
We first prove the right to left implication of Theorem~\ref{thm:cc}.

\begin{Prop}
\label{prop:cc-converse}
Suppose that every unary definable set in every elementary extension of $\sM$ is a finite union of cnc sets.
Then $\sM$ is dp-minimal.
\end{Prop}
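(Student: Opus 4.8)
The plan is to reduce dp-minimality to a statement about mutual indiscernibility via Fact~\ref{fact:shelah}. Recall that $(M,+,<)$ is itself dp-minimal precisely when $(M,+)$ is non-singular (Fact~\ref{fact:dp-oag}); but non-singularity is not assumed here, so one first has to observe that the hypothesis forces it. Indeed, the sets $a + nN$ are themselves unary definable, and if $|M/nM|$ were infinite for some $n$ then passing to a suitable elementary extension $\mathscr N$ one produces a unary definable set — e.g. a single coset $a + nN$ together with a ``new'' point, or a judiciously chosen union of cosets cut down by a convex set — that cannot be written as a finite union of cnc sets, because a cnc decomposition of a union of cosets of $nN$ (intersected with convex pieces) has ``bounded complexity'' controlled by $n$ only if there are finitely many cosets. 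So the hypothesis yields non-singularity of $(M,+)$, hence $(M,+,<)$ is dp-minimal by Fact~\ref{fact:dp-oag}, and moreover the finite unions of cnc sets form a boolean algebra (Fact~\ref{fact:cc-boolean}).

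Next I would show that $\sM$ is, up to the hypothesis, ``weakly $(M,+,<)$-minimal'': every cnc subset of $M$ is definable in $(M,+,<)^{\mathrm{Sh}}$ (this is stated in the excerpt, just after the definition of cnc set), so every finite union of cnc sets in every elementary extension $\mathscr N$ is externally definable in the reduct $\mathscr N|\{+,<\}$. Thus every unary $\sM$-definable set is externally definable in the ordered-group reduct. To upgrade this to a statement about the theory $\mathrm{Th}(\sM)$ — i.e. uniformity in a single formula $\phi(x,z)$ of the ordered-group language working across all models — one uses compactness: fix an $L^\dia$-formula $\varphi(x,y)$; by hypothesis in each model each instance $\varphi(M,b)$ is a finite union of cnc sets, and a standard compactness/boundedness argument bounds the number of cnc pieces and the modulus $n$ uniformly over all models of $\mathrm{Th}(\sM)$ (here non-singularity and Fact~\ref{fact:cc-boolean} are what make ``finite union of cnc sets of bounded complexity'' a first-order expressible, uniform family). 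Hence $\mathrm{Th}(\sM)$ is weakly $\mathrm{Th}(M,+,<)$-minimal in the sense of the paper's definition.

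Finally, apply Proposition~\ref{prop:weak-to-nip}: since $\mathrm{Th}(M,+,<)$ is dp-minimal (non-singular ordered abelian group, Fact~\ref{fact:dp-oag}) and $\mathrm{Th}(\sM)$ is weakly $\mathrm{Th}(M,+,<)$-minimal, $\mathrm{Th}(\sM)$ is dp-minimal, i.e. $\sM$ is dp-minimal. The main obstacle I expect is the uniformity step: deriving from the ``model-by-model'' hypothesis (each unary set in each elementary extension is a finite union of cnc sets) a single bound on the number of cnc pieces and the modulus $n$, valid across all models of the theory. This requires showing that ``$\varphi(x,b)$ is a union of at most $N$ cnc sets with modulus dividing $n!$'' is a first-order condition on $b$ — which it is, using Fact~\ref{fact:cc-boolean} and the definability of $a+nN$ and of convex sets via cuts — and then running compactness to see that if no uniform $N,n$ worked, some model would contain an instance that is not a finite union of cnc sets at all, contradicting the hypothesis. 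An alternative, perhaps cleaner route bypassing the uniformity bookkeeping: argue directly that $\sM$ is dp-minimal by taking two mutually indiscernible sequences $(I_0, I_1)$ over a parameter set $A$ and a point $b$, forming the $\sM$-definable set $X = \varphi(M,b)$, using the hypothesis (in a sufficiently saturated model) to write $X$ as a finite union of cnc sets defined over $Ab$, and then observing that each of $I_0, I_1$ is indiscernible over $A$ together with the boundary data of $X$ — which is finitely many cuts and finitely many cosets — so that the standard dp-minimality argument for $(M,+,<)^{\mathrm{Sh}}$ (which \emph{is} dp-minimal by Facts~\ref{fact:shelah} and~\ref{fact:dp-oag}) applies to show one of $I_0, I_1$ is indiscernible over $Ab$.
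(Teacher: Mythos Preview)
Your overall strategy matches the paper's exactly: establish non-singularity of $(M,+)$, deduce dp-minimality of $(M,+,<)$ via Fact~\ref{fact:dp-oag}, run compactness to get a uniform bound on the number and modulus of cnc pieces for each formula $\varphi(x,y)$, conclude weak $\mathrm{Th}(M,+,<)$-minimality, and finish with Proposition~\ref{prop:weak-to-nip}. The uniformity step you flagged as the ``main obstacle'' is indeed handled by a one-line compactness argument in the paper, just as you outline.

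The one genuine gap is your argument for non-singularity. The examples you propose do not work: a single coset $a+nN$ together with a new point \emph{is} a finite union of cnc sets (the coset is one cnc set, the singleton is another), and ``a judiciously chosen union of cosets cut down by a convex set'' is not obviously definable unless you already know something about the definable sets in a singular ordered abelian group. The claim that ``a cnc decomposition \ldots\ has bounded complexity controlled by $n$ only if there are finitely many cosets'' is not correct as stated, since a finite union of cnc sets may mix different moduli. The paper does not argue this directly at all: it simply cites \cite[Theorem~2.2]{point-wagner}, which gives exactly that if every definable unary set is a finite union of cnc sets then $(M,+)$ is non-singular. You should either invoke that result or supply a genuine construction of a definable set witnessing failure (which requires knowing something nontrivial about quantifier elimination in singular ordered abelian groups).
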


\begin{proof}
By \cite[Theorem 2.2]{point-wagner} $(M,+)$ is non-singular.
So $(M,+,<)$ is dp-minimal by Fact~\ref{fact:dp-oag}.
An application of compactness shows that for every $\sM$-definable family $(X_a)_{ a \in M^k}$ of subsets of $M$ there is $n$ such that every $X_a$ is a union of no more than $n$ sets of the form $C \cap (a + nM)$ for convex $C \subseteq M$ and $a \in M$. 
It is now easy to see that $\mathrm{Th}(\sM)$ is weakly $\mathrm{Th}(M,+,<)$-minimal.
So $\Sa M$ is dp-minimal by Proposition~\ref{prop:weak-to-nip}.
\end{proof}

\subsection{Unary definable sets in dp-minimal expansions of ordered abelian groups}
In this section we prove Theorem~\ref{thm:main-eq}.

\begin{Lem}
\label{lem:coset-intersect}
Suppose $G$ is a group, $\sG$ is a first order expansion of $G$, $H$ is an infinite definable subgroup of $G$, and $X$ is an infinite definable subset of $G$ such that $X \cap aH$ has at most one element for every $a \in G$.
Then $\sG$ is not dp-minimal.
\end{Lem}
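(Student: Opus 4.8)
The plan is to exhibit, from the hypotheses, two mutually indiscernible sequences over a common parameter set and a single element $b$ over which neither sequence remains indiscernible, thereby witnessing $\dprk \geq 2$. The natural choice is to use the cosets of $H$ for one "direction" and the set $X$ for the other. Since $X$ is infinite and $X\cap aH$ has at most one element for each $a$, the set $X$ meets infinitely many distinct cosets of $H$; pick an infinite set $\{x_t : t\in\omega\}\subseteq X$ lying in pairwise distinct cosets $x_tH$. Working in a highly saturated model, I would like to spread this out into a two-dimensional indiscernible array: rows indexed by one parameter running through (translates within) the cosets $x_tH$, columns indexed by another, so that the "coordinates" $(x_t h, \cdot)$ interact with $X$ only along a single diagonal cell.

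Concretely, first I would pass to a large saturated $\sG'\succ\sG$ and, using that $H$ is infinite, build an array $(a_{t,s})_{t,s\in\omega}$ with $a_{t,s}\in x_tH$ for all $s$ — e.g. $a_{t,s}=x_t\cdot h_s$ for suitably chosen $h_s\in H$ — arranged so that the two sequences $I = (\,(a_{t,s})_{s\in\omega} : t\in\omega\,)$ (the rows) and, after a standard extraction, the relevant column-type sequence, are mutually indiscernible over $\emptyset$. Here I would invoke the usual Ramsey/compactness extraction (as in \cite[Chapter 4]{Simon-Book}) to get mutual indiscernibility while keeping the combinatorial pattern "$a_{t,s}\in x_tH$ and $a_{t,s}\in X$ iff $s = s_0$" for some fixed index, which is an indiscernible-array-compatible pattern because $X$ meets each coset $x_tH$ in at most one point. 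Then choosing $b$ to be (a pointer to) the distinguished diagonal entry — more precisely an element such that the formula "$x\in X$" or "$x\in bH$" selects exactly one entry per row and exactly one row — breaks indiscernibility of every row (it distinguishes the column where the row meets $X$) and of every column (it distinguishes the row selected by $bH$), so no sequence in a $2$-indexed mutually indiscernible family becomes indiscernible over $b$, contradicting dp-minimality.

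The step I expect to be the main obstacle is arranging genuine \emph{mutual} indiscernibility of the two sequences while retaining the non-uniformity that does the work: after Ramsey extraction one must check that the limiting array still has the property that $X$ meets each row in exactly one (and the "same-indexed") column and that $bH$ picks out exactly one row, rather than degenerating to $X$ meeting no row or the pattern becoming trivial. The clean way around this is to set it up as an indiscernible array from the outset via compactness: write down the (consistent, by the finite character of the coset condition and infinitude of $H$ and of $X$'s coset-support) type of an $\omega\times\omega$ mutually indiscernible array together with a constant $b$ realizing the diagonal-selection condition, then realize it in a saturated model. One should also double-check the trivial reductions: $H$ infinite and of possibly infinite index is fine, and if $X\cap aH=\emptyset$ for all but finitely many $a$ then $X$ would be a finite union of singletons, hence finite, contradicting $|X|=\aleph_0$ — so the coset-support of $X$ is genuinely infinite, which is all that is needed.
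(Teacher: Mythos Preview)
Your instinct to use two ``directions'' --- cosets of $H$ and the set $X$ --- is exactly right, and the element you want is indeed of the form $a_k b_l$ with $a_k\in X$, $b_l\in H$. But the route via mutually indiscernible sequences is needlessly indirect, and the obstacle you flag is real: after a Ramsey/compactness extraction there is no reason the pattern ``$a_{t,s}\in X$ iff $s=s_0$'' survives, and your proposed fix (writing down the type of the whole array with the diagonal condition and realising it by compactness) is not justified --- checking consistency of that type is precisely the content of the lemma, so this is circular unless you unwind it into something concrete.

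The paper avoids this entirely by using the equivalent \emph{ict-pattern} (array) characterisation of dp-rank rather than the indiscernible-sequence one. No extraction is needed. One simply takes sequences $(a_i)_{i\in\omega}$ of distinct elements of $X$ (automatically in distinct left cosets of $H$, by the hypothesis on $X$) and $(b_j)_{j\in\omega}$ of distinct elements of $H$, sets
\[
\phi_1(x,y)\;:=\;(x^{-1}y\in H),\qquad \phi_2(x,y)\;:=\;(yx^{-1}\in X),
\]
and checks directly that for each $(k,l)$ the element $c:=a_k b_l$ satisfies $\phi_1(a_i,c)$ iff $i=k$ (since $a_i^{-1}a_k b_l\in H\iff a_iH=a_kH$) and $\phi_2(b_j,c)$ iff $j=l$ (since $a_k b_l b_j^{-1}\in a_kH$ and $X\cap a_kH=\{a_k\}$, forcing $b_l=b_j$). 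That is already an ict-pattern of depth $2$, so $\dprk(\sG)\geq 2$. The moral: when you have an explicit witness $c_{k,l}$ for every cell of the grid, go straight to the array definition rather than through indiscernibles.
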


\noindent
We construct an array witnessing failure of dp-minimality, see \cite[Definition 4.21]{Simon-Book} for an overview of such arrays.

\begin{proof}
Let $(a_i : i \in \N), (b_j : j \in \N)$ be sequences of distinct elements of $X,H$, respectively.
Let $\phi_1(x,y) := (x^{-1}y \in H)$ and $\phi_2(x,y) := (yx^{-1} \in X)$.
Fix $k,l \in \N$.
Then $\sG \models \phi_1(a_i, a_kb_l)$ if and only if $i = k$ and $\sG \models \phi_2(b_j, a_k b_l)$ if and only if $j = l$.
So $( \phi_1(a_i,y) : i \in \N)$ and $(\phi_2(b_j, y) : j \in \N )$ form an array witnessing $\dprk(\sG) \geq 2$.
\end{proof}

\begin{Lem}
\label{lem:coset-intersect-1}
Let $H$ be an infinite $\sM$-definable subgroup of $(M,+)$, $X \subseteq M$ be $\sM$-definable, and $|X \cap (a + H)| < \aleph_0$ for all $a \in M$.
If $\dprk(\sM) = 1$ then $X$ is finite.
\end{Lem}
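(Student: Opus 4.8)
The plan is to reduce to Lemma~\ref{lem:coset-intersect}. Suppose toward a contradiction that $X$ is infinite. Since $X \cap (a+H)$ is finite for every $a \in M$, the set $X$ must meet infinitely many cosets of $H$. So it suffices to produce an infinite $\sM$-definable $X' \subseteq X$ meeting each coset of $H$ in at most one point, as then Lemma~\ref{lem:coset-intersect} applies to $G = (M,+)$, $\sG = \sM$, the infinite definable subgroup $H$, and the infinite definable set $X'$, yielding $\dprk(\sM) \geq 2$, a contradiction.

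To produce $X'$ I would use the order on $M$ to extract a definable transversal. For $b \in X$ the set $X \cap (b+H)$ is finite and nonempty, hence has a least element with respect to $<$; let
$$ X' = \{ b \in X : b \leq c \text{ for all } c \in X \cap (b+H) \}. $$
Since $\sM$ expands $(M,+,<)$ and $H$, $X$ are $\sM$-definable, $X'$ is $\sM$-definable. By construction, for each coset $a+H$ meeting $X$ the set $X'$ contains exactly the $<$-minimum of $X \cap (a+H)$ and no other element of that coset; hence $X' \cap (a+H)$ has at most one element for every $a \in M$, and $X'$ is infinite because $X$ meets infinitely many cosets of $H$. This completes the reduction and the proof.

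The only point requiring any care is the definability of the transversal $X'$, and this is exactly where the ambient linear order is used; without it one could not in general choose a definable point from each coset, which is why this strengthening of Lemma~\ref{lem:coset-intersect} (from ``at most one point per coset'' to ``finitely many points per coset'') is stated for expansions of ordered abelian groups. Accordingly I do not expect a serious obstacle here.
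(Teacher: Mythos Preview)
Your proof is correct and matches the paper's approach almost exactly: the paper defines $Y$ to be the set of $a \in X$ that are \emph{maximal} in $X \cap (a+H)$, observes $|Y \cap (a+H)| \leq 1$ for all $a$, and applies Lemma~\ref{lem:coset-intersect}. The only difference is that you take minima instead of maxima, which is immaterial.
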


\begin{proof}
Let $Y$ be the set of $a \in X$ such that $a$ is maximal in $X \cap (a + H)$.
Then $|Y \cap (a + H)| \leq 1$ for all $a \in M$.
Apply Lemma~\ref{lem:coset-intersect}.
\end{proof}

\noindent The first claim of Fact~\ref{fact:finite-index} is \cite[Lemma 3.2]{Simon-dp}.
Note that the convex hull of a subgroup of an ordered abelian group is a subgroup.

\begin{Fact}
\label{fact:finite-index}
If $\sM$ is dp-minimal then every definable subgroup of $(M,+)$ has finite index in its convex hull.
\end{Fact}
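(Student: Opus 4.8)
The plan is to prove the contrapositive of the first claim: if $\sM$ has a definable subgroup $H\leq(M,+)$ of infinite index in its convex hull $D=\closure{H}$, then $\sM$ is not dp-minimal. Since $D$ is itself a definable subgroup (it is the convex hull of a subgroup, hence a subgroup, as the preceding remark notes), after passing to the definable ordered group $D$ we may assume $H$ is convex-dense in $M$ but of infinite index; in particular $H$ is infinite. I would then aim to produce a definable set $X$ meeting each coset $a+H$ in at most one point, which by Lemma~\ref{lem:coset-intersect} forces $\dprk(\sM)\geq 2$.

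The key step is constructing such an $X$ from the order. Because $H$ is convex, each coset $a+H$ is a convex subset of $M$, and these cosets partition $M$ into infinitely many convex blocks, linearly ordered by the induced order on $M/H$. The natural candidate is a definable ``section'': pick, in each coset, a canonical element. The first idea is to use suprema or infima of cosets inside $M$, but a coset need not have an endpoint in $M$. Instead, I would work with a single auxiliary element $c\in M$ and consider $X=\{a\in M:\ a-c\notin H\ \text{and}\ 0<a-c,\ \text{and}\ (c,a)\cap(c+H)=\emptyset\ \text{in the appropriate sense}\}$; more robustly, using that the cosets are convex and $H\neq M$, one can define for each coset the relation ``$a$ is the least element of its coset that is $\geq c$'' whenever such a least element exists, and otherwise fall back on a two-variable array argument directly. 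Alternatively, and more cleanly, I would bypass the explicit section: since $H$ is convex and infinite with infinite index, I can take $X$ to be any definable transversal-like set, but if none is outright definable I would instead mimic the proof of Lemma~\ref{lem:coset-intersect} directly with $\phi_1(x,y)$ expressing ``$x-y\in H$'' and $\phi_2(x,y)$ expressing ``$y<x$ and there is no $z$ with $y<z<x$ and $z-x\in H$'' — the convexity of $H$ makes the second formula behave like membership in a transversal along a suitably chosen mutually indiscernible pair, one sequence increasing within a fixed coset and the other stepping through distinct cosets.

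Concretely, the cleanest route I expect to use: choose a mutually indiscernible pair $(I,J)$ over a base $A$ where $I=(h_i:i\in\N)$ is a strictly increasing sequence in $H$ and $J=(d_j:j\in\N)$ is a sequence of representatives of distinct cosets of $H$, strictly increasing in $M/H$; set $b_{i,j}:=d_j+h_i$. Then $d_j+h_i\in d_{j'}+H$ iff $j=j'$, giving the first coordinate, and by convexity of $H$ together with strict monotonicity of $I$, the formula ``$x-(d_j+h_i)\in H$ and $x\leq d_j+h_i$ and there is no $z\in H$ with $z>0$ and $x<x+z\leq d_j+h_i$... '' — more simply ``$x$ is the maximum of $(x+H)\cap\{\,t: t\leq d_j+h_i\,\}$'' — isolates $i$ from the array, exactly as in Lemma~\ref{lem:coset-intersect-1}. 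This yields an ict-pattern of depth $2$, contradicting dp-minimality.

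The main obstacle is definability of the section: ensuring that ``the distinguished element of a coset'' is genuinely $\sM$-definable (with parameters), rather than merely definable in the Shelah expansion. I expect to resolve this by not asking for a global section at all, but only for the local comparison formula ``$x-y\in H \wedge x\leq y \wedge (x,y]\cap(x+H)=\{y'\leq y: y'-x\in H\}$'' evaluated along the specific array elements $b_{i,j}$, where convexity of $H$ makes the relevant instances collapse to simple order and coset conditions — so the formulas used are visibly in the language of $\sM$ and the array computation goes through. If even that proves delicate, the fallback is to invoke Fact~\ref{fact:finite-index}'s source \cite[Lemma 3.2]{Simon-dp} directly, but the self-contained argument above via Lemma~\ref{lem:coset-intersect} should suffice.
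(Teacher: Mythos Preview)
The paper does not give a proof of this fact; it simply records it as \cite[Lemma 3.2]{Simon-dp}. Your stated fallback of citing that lemma is exactly what the paper does.

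Your proposed self-contained argument, however, has a genuine gap. After replacing $M$ by the convex hull $D$ of $H$ you write ``Because $H$ is convex, each coset $a+H$ is a convex subset of $M$.'' This is false: having convex hull equal to all of $M$ means $H$ is cofinal and coinitial in $M$, not that $H$ is convex---indeed, a convex subgroup whose convex hull is $M$ would equal $M$, contradicting infinite index. So in the situation at hand $H$ is \emph{necessarily non-convex}. Consequently the cosets of $H$ are interleaved throughout $M$ rather than arranged as convex blocks; there is no induced linear order on $M/H$, no ``least element of a coset that is $\geq c$'', and no order-definable section. Every version of $\phi_2$ you propose, as well as the hoped-for transversal $X$ to feed into Lemma~\ref{lem:coset-intersect}, rests on this mistaken convexity and collapses once it is removed: the array $b_{i,j}=d_j+h_i$ still isolates $j$ via coset membership, but nothing in the order lets you read off $i$.
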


\noindent Lemma~\ref{lem:finite-index} shows that every definable subgroup is a finite union of cnc sets.

\begin{Lem}
\label{lem:finite-index}
Suppose $\sM$ is dp-minimal.
Let $H$ be an $\mathscr{M}$-definable subgroup of $(M,+)$.
Then $H$ is of the form $H' \cap C$ where $C$ is the convex hull of $H$ and $H'$ is a finite union of cosets of $nM$ for some $n$.
\end{Lem}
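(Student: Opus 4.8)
\textbf{Proof plan for Lemma~\ref{lem:finite-index}.}
The plan is to combine Fact~\ref{fact:finite-index}, which says that $H$ has finite index in its convex hull $C$, with Fact~\ref{fact:group}, which says that a finite-index subgroup of a non-singular abelian group is a finite union of cosets of $nM$ for some $n$. First I would note that $(M,+)$ is non-singular by Fact~\ref{fact:dp-oag}, since $\sM$ is dp-minimal; this is what lets us apply Fact~\ref{fact:group}. The subtlety is that Fact~\ref{fact:group} applies to finite-index subgroups of $(M,+)$, whereas $H$ has finite index in $C$, not necessarily in $M$ (indeed $C$ itself need not be finite index in $M$). So the argument has to be run relative to $C$.

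The key step is to produce, from $H \leq C$ of finite index, a finite union of cosets of $nM$ in $M$ whose intersection with $C$ is exactly $H$. First I would invoke Fact~\ref{fact:group} inside the group $(C,+)$: since $C$ is non-singular (being a subgroup of the non-singular group $M$) and $H$ has finite index $m$ in $C$, the subgroup $(m!)C$ is contained in $H$, and $H$ is a finite union of cosets of $(m!)C$ inside $C$. Now I would observe that $(m!)C = C \cap (m!)M$: the inclusion $\subseteq$ is clear, and for $\supseteq$, if $x \in C$ and $x = (m!)y$ for some $y \in M$, then since $C$ is convex and contains $0$ and $x$, and $y$ lies between $0$ and $x$ (up to sign), we get $y \in C$, hence $x \in (m!)C$. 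Therefore each coset of $(m!)C$ in $C$ is the intersection with $C$ of a coset of $(m!)M$ in $M$. Writing $H = \bigcup_{i=1}^{r} (c_i + (m!)C) = \bigcup_{i=1}^r (c_i + (m!)M) \cap C = \left( \bigcup_{i=1}^r (c_i + (m!)M) \right) \cap C$, we may take $n := m!$ and $H' := \bigcup_{i=1}^r (c_i + nM)$, which is a finite union of cosets of $nM$, and $C$ is its convex hull by Fact~\ref{fact:finite-index}.

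I do not expect a serious obstacle here; the only point requiring a little care is the identity $(m!)C = C \cap (m!)M$, which relies on convexity of $C$ together with the fact that in an ordered abelian group, if $(m!)y$ lies in a convex set containing $0$ then so does $y$ (because $|y| \le |(m!)y|$). Everything else is bookkeeping with cosets. One should also remark that the case $H$ finite, where the convex hull $C$ is itself finite, is covered: then $H = C$ and $H$ is a single coset of $nM$ for a suitable $n$ — in fact here one can simply take $C = H$ and $H'= M$, or note $H$ finite of order $d$ forces $H \subseteq $ torsion, but since we only need the stated form, the general argument above applies verbatim with $r$ possibly large.
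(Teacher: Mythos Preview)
Your proposal is correct and follows essentially the same route as the paper: use Fact~\ref{fact:finite-index} to get $[C:H]<\aleph_0$, apply Fact~\ref{fact:group} inside $C$ to write $H$ as a finite union of cosets of $nC$, and then use the identity $nC = C \cap nM$ (the paper's ``it is easy to see that $nC + a = (nM + a)\cap C$'') to rewrite these as intersections with cosets of $nM$.

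One small correction: your parenthetical justification ``since $C$ is non-singular (being a subgroup of the non-singular group $M$)'' is not valid in general---subgroups of non-singular abelian groups need not be non-singular (e.g.\ $\bigoplus_{i<\omega}\Z$ inside $\bigoplus_{i<\omega}\Q$). The correct reason, which you actually provide in the very next sentence, is the convexity identity $nC = C\cap nM$: this yields an injection $C/nC \hookrightarrow M/nM$, whence $|C/nC|<\aleph_0$. So simply reorder the argument to establish $nC = C\cap nM$ first (your convexity argument $|y|\le |ny|$ is exactly right for this), and then non-singularity of $C$ and the application of Fact~\ref{fact:group} both follow.
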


\begin{proof}
Let $C$ be the convex hull of $H$ in $M$.
So $C$ is a subgroup of $(M,+)$.
By Fact~\ref{fact:finite-index} $H$ has finite index in $C$.
By Fact~\ref{fact:group} $H$ is a finite union of cosets of $nC$ for some $n$.
Finally, it is easy to see that $nC + a = (nM + a) \cap C$ for any $a \in C$.
\end{proof}

\begin{Lem}
\label{lem:local-behavior}
Suppose $\sM$ is dp-minimal and one of the following is satisfied:
\begin{enumerate}
\item $(M,+,<)$ is dense, or
\item $(M,+,<)$ is discrete and $\sM$ is sufficiently saturated.
\end{enumerate}
Let $X \subseteq M$ be  $\sM$-definable.
There is $n$ such that every infinite interval $I$ contains an infinite interval $J$ such that $J \cap X = J \cap [\bigcup_{a \in A} a + nM]$ for finite $A \subseteq M$.
\end{Lem}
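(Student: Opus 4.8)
\textbf{Proof proposal for Lemma~\ref{lem:local-behavior}.}

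The plan is to reduce the statement to a local $n$-divisibility analysis via the regular convex subgroups $R_n$, and then handle the dense and discrete cases separately as the paper announces. First I would observe that it suffices to produce, for each infinite interval $I$, \emph{some} $n$ and some infinite subinterval $J$ with the stated property, and then extract a single $n$ working for all $I$ by a compactness/definability argument: the family of ``bad'' intervals (those admitting no good subinterval with parameter $\leq n$) is definable, and if it were nonempty for every $n$ one could build an array or a nowhere-dense counterexample contradicting dp-minimality. So the core is: given an infinite interval $I$, find an infinite $J \subseteq I$ on which $X$ agrees with a finite boolean combination of cosets of $nM$.

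For the dense case (1), the key tool is Fact~\ref{fact:goodrick-nwd}: every nowhere dense definable subset of $M$ is finite. Consider the behavior of $X$ at the ``level'' of $n$-divisibility. Fix $n$ large enough that $R_n \neq \{0\}$ (Lemma~\ref{lem:bel-1}); inside $R_n$, $n$-divisible elements are dense by regularity, and Lemma~\ref{lem:Rn-index} controls how cosets of $nR_n$ meet intervals. The idea is to look at the definable set $X$ modulo the convex equivalence relation given by $R_n$ (or rather modulo a small convex subgroup), and argue that on a suitable subinterval $J$, membership in $X$ depends only on the coset of $nM$ — otherwise $X$ or its complement would have a nowhere dense ``boundary'' forcing infiniteness of a nowhere dense definable set, contradicting Fact~\ref{fact:goodrick-nwd}. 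More precisely, I expect to use that the set of $a$ where $X$ is ``locally coset-like with modulus $n$'' is definable and, by dp-minimality applied through Goodrick's lemma, cofinite in an appropriate sense on $I$; the complement, being nowhere dense, is finite, and avoiding those finitely many points gives $J$.

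For the discrete case (2), with $\sM$ sufficiently saturated, the plan is to pass to the reduct: since $(M,+,<)$ is discrete and dp-minimal, $\{k 1_M : k \in \Z\}$ is its minimal nonzero convex subgroup, and one wants to compare $\sM$ locally with $(\Z,+,<)$. The hope is to quotient by a large convex subgroup $H$ so that a single $E$-class looks like a copy of $(\Z,+,<)$ (or an elementary extension thereof), invoke Fact~\ref{fact:Z} to conclude that the trace of $X$ there is $(\Z,+,<)$-definable, and then use Fact~\ref{fact:shelah} to see that externally definable sets in $(\Z,+,<)$ are already definable, hence of cnc form, i.e.\ a finite union of cosets of $nM$ intersected with intervals; restricting to an infinite subinterval $J$ inside one such class removes the convex part and leaves $J \cap X = J \cap [\bigcup_{a\in A} a+nM]$. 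Saturation is what guarantees the class is genuinely infinite, equivalently a model of $\mathrm{Th}(\Z,+,<)$ rather than a finite truncation.

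The main obstacle I anticipate is the dense case: making rigorous the claim that ``$X$ is locally a finite union of cosets of $nM$ off a nowhere dense set.'' This requires showing that the obstruction to local coset-behavior is genuinely nowhere dense (not merely small), which in turn needs a careful argument that if $X$ fails to be coset-like on a somewhere-dense set then one can extract either an infinite nowhere dense definable set (violating Fact~\ref{fact:goodrick-nwd}) or a configuration violating Lemma~\ref{lem:coset-intersect-1} — for instance a definable subset meeting each coset of an infinite definable subgroup (such as $nM$ or $R_n$) in a finite nonempty set. Tying the geometry at the level of $R_n$ to the coset structure of $nM$, and ensuring the relevant subgroups are infinite so that Lemma~\ref{lem:coset-intersect-1} applies, is the delicate technical point; everything else is bookkeeping with Facts~\ref{fact:cc-boolean}, \ref{fact:bel} and Lemmas~\ref{lem:bel-1}, \ref{lem:Rn-index}.
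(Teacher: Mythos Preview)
Your discrete case is essentially the paper's argument, though the ``quotient by a large convex subgroup'' phrasing is backwards: the paper works \emph{inside} the minimal nonzero convex subgroup $\Z = 1_M\Z$ (not a quotient). One names $\Z$ as a predicate, notes $(\Sa M,\Z)$ is dp-minimal by Fact~\ref{fact:shelah}, uses Lemma~\ref{lem:coset-intersect-1} to find a coset of $\Z$ meeting $X$ infinitely, translates so $\N\cap X$ is infinite, and then Fact~\ref{fact:Z} forces $\Z\cap X$ to be eventually a finite union of cosets of $n\Z$; saturation stretches this past $\N$ to an infinite interval $(b,b')$. So your plan there works once the direction is fixed.

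The dense case has a genuine gap. Your strategy is to show that the set of points where $X$ is \emph{not} locally coset-like is nowhere dense and then invoke Fact~\ref{fact:goodrick-nwd}. But this presupposes that the good locus is nonempty, or at least somewhere dense, and nothing in your outline produces even a single point where $X$ locally agrees with a union of cosets of $nM$. The facts about $R_n$ (Lemmas~\ref{lem:bel-1}, \ref{lem:Rn-index}) only tell you that cosets of $nM$ are evenly spread inside $R_n$; they say nothing about how the arbitrary definable set $X$ sits relative to those cosets. The paper's missing ingredient is the local additive stabilizer argument from \cite[Thm~3.6]{Simon-dp}: after passing to a saturated extension and translating/shrinking, one arranges that $0\in I$, $-I=I$, $X$ is dense in $I$, and $g,h\in X$ with $g+h\in I$ implies $g+h,-g\in X$. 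Saturation then yields a convex subgroup $C\subseteq I$, and $X\cap C$ is an honest subgroup of $(M,+)$ cofinal in $C$; now Lemma~\ref{lem:finite-index} (applied in the Shelah expansion, which is still dp-minimal) says $X\cap C$ is a finite union of cosets of $nM$ intersected with $C$. This is what manufactures the first good subinterval; your nowhere-dense argument cannot bootstrap without it.
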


\noindent The proof of Lemma~\ref{lem:local-behavior} splits into the dense and discrete cases.
In the discrete case we apply Facts~\ref{fact:Z} and \ref{fact:shelah}.
In the dense case we apply results of Goodrick and Simon.
Fact~\ref{fact:goodrick-nwd} is \cite[Lemma 3.3]{goodrick}.

\begin{Fact}
\label{fact:goodrick-nwd}
If $(M,+,<)$ is dense and $\sM$ is dp-minimal then every nowhere dense definable subset of $M$ is finite.
\end{Fact}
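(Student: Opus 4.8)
\noindent The plan is to argue contrapositively: assuming $\sM$ is dp-minimal but some definable $X \subseteq M$ is infinite and nowhere dense, I would construct an array witnessing $\dprk(\sM) \geq 2$, of the type used in the proof of Lemma~\ref{lem:coset-intersect} (see also \cite[Definition 4.21]{Simon-Book}), and so derive a contradiction. One works in a monster model. Since $\sM$ expands a linear order the closure of a definable set is again definable (``$x \in \cl(X)$'' is expressed by ``every open interval containing $x$ meets $X$''), and a set is finite if and only if its closure is finite; so we may replace $X$ by $\cl(X)$ and assume $X$ is closed. If a closed set has only finitely many maximal gaps (maximal open intervals disjoint from it) then its complement is a finite union of open intervals, so the set contains a nonempty interval; hence our $X$ has infinitely many maximal gaps.

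The array would have two axes. The first is purely order-theoretic: by saturation and Ramsey I would extract a ``shrinking'' family of pairwise disjoint intervals $I_i$ $(i<\omega)$ clustering near a suitable point (an accumulation point of $X$, where gaps pile up), take $\phi_1(x;u,v)$ to be ``$u<x<v$'', and let the parameters be the endpoints of the $I_i$, so that a point lies in at most one $I_i$. The second axis is where the group operation enters: for parameters $w'<w$ let $\phi_2(x;w,w')$ say that the first element of $X$ strictly above $x$ lies at distance in $[w',w)$ from $x$ --- that is, $X\cap(x,x+w')=\emptyset$ and $X\cap(x,x+w)\neq\emptyset$ --- and let the parameters run through consecutive terms of an auxiliary strictly decreasing sequence $w_0>w_1>\cdots$ tending to $0$, so that a point satisfies $\phi_2$ for at most one index. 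Each of the two families is an ``antichain'' in the sense required for the array; the substance is \emph{cross-consistency}: for every pair $(i,j)$ there should be $x\in I_i$ whose nearest upper $X$-point sits at distance in $[w_{j+1},w_j)$.

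Cross-consistency is the main obstacle, and it is what forces the configuration to be chosen with care. For a fixed $I_i$ the attainable upper-distances form a set cofinal to $0$ only when $X$ accumulates inside $I_i$, so when $X$ has a single accumulation point the naive choice of the $I_i$ fails; one must instead compare $x$ against translates $X+t$ of $X$ at matched scales and track the non-archimedean behaviour of the gap lengths, and one must also separately handle the possibility that $X$ has no accumulation point at all. This is exactly Goodrick's Lemma~3.3, and the argument I would give is his \cite[Lemma 3.3]{goodrick}: extract a sufficiently homogeneous (mutually indiscernible) configuration of gaps and auxiliary points near the cluster point, read the two axes off it, and verify that the resulting array has the forbidden shape, contradicting $\dprk(\sM)=1$.
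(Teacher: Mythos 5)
The paper gives no proof of Fact~\ref{fact:goodrick-nwd} at all: it is imported verbatim as \cite[Lemma 3.3]{goodrick}, and since your proposal ultimately rests on exactly that citation, it matches the paper's treatment. Your surrounding sketch (replacing $X$ by its closure, noting an infinite nowhere dense closed set has infinitely many gaps, and setting up the two-axis array with the interval predicates and the gap-length predicates) is a sensible reconstruction, but, as you yourself acknowledge, the essential cross-consistency step is supplied by Goodrick's argument rather than carried out here --- which is no worse than what the paper does.
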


\noindent
Fact~\ref{fact:translate} is proven in the proof of \cite[Thm 3.6]{Simon-dp}.
Note that while \cite[Thm 3.6]{Simon-dp} is stated for expansions of divisible ordered abelian groups the proof only uses this assumption in the final step.

\begin{Fact}
\label{fact:translate}
Suppose that $(M,+,<)$ is dense, $\Sa M$ is dp-minimal, and $X$ is an infinite definable subset of $M$.
Then there is $a \in X$ and an interval $I$ containing $0$ such that
\begin{enumerate}
\item $X^* = (X - a) \cap I$ is dense in $I$,
\item If $g,h \in X^*$ and $g + h \in I$ then $g + h, -g \in X^*$.
\end{enumerate}
\end{Fact}

\noindent
We now prove Lemma~\ref{lem:local-behavior}.

\begin{proof}
Applying non-singularity and either compactness (in the dense case) or saturation (in the discrete case) we see that it suffices to show that every infinite interval $I$ has an infinite subinterval $J$ such that $J \cap X = J \cap Y$ for some finite union $Y$ of cosets of some $nM$.
\medskip

\noindent
Let $I$ be an infinite open interval.
The case when $I \cap X$ is finite is trivial, so suppose that $I \cap X$ is infinite.
After replacing $X$ with $I \cap X$ if necessary we suppose $X \subseteq I$.
It now suffices to produce $p \in X$ and an infinite subinterval $J$ of $I$ containing $p$ such that $J \cap X = J \cap Y$ where $Y$ is a finite union of cosets of some $nM$.
\medskip

\noindent
Suppose $(M,+,<)$ is dense.
In this case every nonempty open interval is infinite.
After passing to an elementary extension of $\sM$ if necessary we suppose $\sM$ is sufficiently saturated.
By Fact~\ref{fact:translate} we suppose, after translating and shrinking $I$ if necessary, that $I$ contains $0$, $-I = I$, $X$ is dense in $I$, and if $g,h \in X$ satisfy $g + h \in I$ then $g + h, -g \in X$.
Saturation yields a convex subgroup $C$ contained in $I$.
Then $X \cap C$ is a subgroup of $(M,+)$ and $X$ is dense and hence cofinal in $C$.
Fact~\ref{fact:shelah} and Lemma~\ref{lem:finite-index} together show that $X \cap C$ is equal to $H \cap C$ where $H$ is a finite union of cosets of $nM$ for some $n$.
Now take $J \subseteq C$ to be any open interval containing $0$.
\medskip

\noindent
Now suppose $(2)$ holds.
We identify $1_M$ with $1$ and $1_M\Z$ with $\Z$, so $\Z$ is a convex subgroup of $(M,+,<)$.
Saturation implies $\Z$ is non-trivial.
Fact~\ref{fact:shelah} shows that $(\sM,\Z)$ is dp-minimal.
Lemma~\ref{lem:coset-intersect-1} shows that some coset of $\Z$ has infinite intersection with $X$.
After possibly translating and reflecting we suppose $\N \cap X$ is infinite.
This implies that $\N \subseteq I$.
Fact~\ref{fact:Z} yields $b \in \Z$, $n \geq 1$, and a finite union $Y$ of cosets of $nM$ such that 
$$ \{ a \in \Z : a > b\} \cap X = \{ a \in \Z : a > b\} \cap Y. $$
An application of saturation yields $b' \in I, b' > \N$ such that $(b,b') \cap X=(b,b') \cap Y$.
Take $J = (b,b')$ and note that $J$ is infinite.
\end{proof}

\begin{Lem}
\label{lem:eq-0}
Suppose $\sM$ is dp-minimal and one of the following holds:
\begin{enumerate}
\item $(M,+,<)$ is dense, or
\item $(M,+,<)$ is discrete and $\sM$ is sufficiently saturated.
\end{enumerate}
Let $E$ be a convex equivalence relation such that for every infinite interval $I$ we have $|I \cap E_b| \geq \aleph_0$ for some $b \in M$.
Then there are only finitely many finite $E$-classes.
\end{Lem}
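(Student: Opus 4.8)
The plan is to argue by contradiction, assuming $E$ has infinitely many finite classes, and to split according to whether $(M,+,<)$ is dense or discrete; throughout I take $E$ to be $\sM$-definable, as it is in the intended application. \textbf{Dense case.} In a densely ordered abelian group every convex set with at least two points contains a nonempty open interval and is therefore infinite, so every finite $E$-class is a singleton and the union $S$ of all finite $E$-classes is exactly the $\sM$-definable set $\{x\in M:E_x=\{x\}\}$. I claim $S$ is nowhere dense. Let $I$ be a nonempty open interval; by hypothesis there is $b$ with $|I\cap E_b|\ge\aleph_0$, so $I\cap E_b$ is an infinite convex subset of the dense order $I$ and hence contains a nonempty open subinterval $J\subseteq E_b$. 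As $E_b$ has at least two points, $J\cap S=\emptyset$, so $S$ is not dense in $I$; since $I$ was arbitrary, $S$ is nowhere dense, and therefore finite by Fact~\ref{fact:goodrick-nwd}. This contradicts the assumption.

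\textbf{Discrete case.} Now $\sM$ is sufficiently saturated, so $\Z_0:=\Z\cdot 1_M$ is a non-trivial convex subgroup of $(M,+,<)$ (in fact the smallest non-zero one), the quotient $M/\Z_0$ is densely ordered, and $(\sM,\Z_0)$ is dp-minimal by Fact~\ref{fact:shelah}. A finite convex subset of a discrete ordered abelian group has the form $\{e,e+1_M,\dots,e+(k-1)1_M\}$, so it lies inside a single coset of $\Z_0$; hence every finite $E$-class lies in one coset of $\Z_0$. For $n\ge 1$ put $Z_n:=\{x\in M:|E_x|\le n\}$, an $\sM$-definable set; the assumption forces either that some $Z_n$ is infinite, or that all $Z_n$ are finite while the sizes of the finite $E$-classes are unbounded. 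In the first case, applying Lemma~\ref{lem:coset-intersect-1} inside the dp-minimal structure $(\sM,\Z_0)$ to the subgroup $\Z_0$ and the set $Z_n$ shows that some coset $a+\Z_0$ meets $Z_n$ in an infinite set; translating by $-a$ (which preserves convexity of $E$, its $\sM$-definability, the number of finite classes, and the interval hypothesis) we may assume $\Z_0\cap Z_n$ is infinite. The structure induced by $(\sM,\Z_0)$ on $\Z_0$ is a dp-minimal expansion of $(\Z_0,+,<)\cong(\Z,+,<)$, so by Fact~\ref{fact:Z} the relation $E\cap\Z_0^2$ is definable in $(\Z,+,<)$; a Presburger-definable convex equivalence relation on $\Z$ has an eventually periodic set of class boundaries, so past some $b\in\Z_0$ all of its classes are finite of size at most some fixed $N$, and by convexity (and minimality of $\Z_0$) each such class is an entire $E$-class of $M$. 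One now reaches a contradiction with the interval hypothesis by using Fact~\ref{fact:Z} and saturation to produce an infinite interval of $M$ that no infinite $E$-class meets infinitely --- analogously to the way the infinite interval $J=(b,b')$ is produced in the discrete case of Lemma~\ref{lem:local-behavior}. The remaining possibility (all $Z_n$ finite, sizes unbounded) is reduced to the first case by the same combination of Fact~\ref{fact:Z} and saturation.

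The dense case is essentially immediate from Goodrick's dichotomy (Fact~\ref{fact:goodrick-nwd}) once one notices that finite classes are singletons; the work is entirely in the discrete case. The hypothesis of sufficient saturation is used there in an essential way: it is what makes $\Z\cdot 1_M$ the genuine minimal convex subgroup and prevents intervals from being ``too short'', which is exactly what rules out a convex equivalence relation whose finite classes accumulate inside a coset of $\Z\cdot 1_M$. The tools are Facts~\ref{fact:shelah} and \ref{fact:Z} together with Lemma~\ref{lem:coset-intersect-1}, exactly as in the discrete case of Lemma~\ref{lem:local-behavior}. I expect the delicate points to be the bookkeeping around cosets of $\Z\cdot 1_M$ and the honest production of the contradicting interval, together with the clean treatment of the unbounded-size subcase.
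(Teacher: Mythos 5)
Your proof is correct and, in substance, the same as the paper's: the dense case is handled identically (finite classes are singletons, their union is definable and nowhere dense by the interval hypothesis, hence finite by Fact~\ref{fact:goodrick-nwd}), and the discrete case rests on the same trio of Fact~\ref{fact:shelah}, Fact~\ref{fact:Z} and Lemma~\ref{lem:coset-intersect-1}, plus a saturation/overspill step to manufacture the contradicting infinite interval. The one genuine organizational difference is the case split in the discrete setting: you divide according to whether some $Z_n=\{x:|E_x|\le n\}$ is infinite, whereas the paper divides according to whether some coset of $\Z\cdot 1_M$ contains infinitely many finite classes. The paper's split pays off in its second branch: if every coset contains only finitely many finite classes, the $(\sM,\Z\cdot 1_M)$-definable set of minimal elements of finite classes meets each coset finitely, so Lemma~\ref{lem:coset-intersect-1} finishes in one line. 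Your second branch (all $Z_n$ finite, sizes unbounded) instead has to be folded back into the first, and your one-sentence reduction slightly understates what is needed: before Fact~\ref{fact:Z} and eventual periodicity can bound class sizes, you must again invoke Lemma~\ref{lem:coset-intersect-1} (applied to the $(\sM,\Z\cdot 1_M)$-definable set of points lying in finite classes, definability of which uses the saturation hypothesis) to concentrate infinitely many finite classes in a single coset. That step is routine given what you have already set up, so it is not a gap, but the coset-based split is the cleaner bookkeeping. Your remark that the lemma tacitly assumes $E$ is $\sM$-definable is correct and matches both the paper's proof and its application.
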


\begin{proof}
Suppose $(M,+,<)$ is dense.
So an $E$-class is finite if and only if it is a singleton.
Let $Y$ be the set of $b \in M$ such that $E_b = \{b\}$.
We show that $Y$ is finite.
By Fact~\ref{fact:goodrick-nwd} it suffices to show that any nonempty open interval $I$ contains a nonempty interval $J$ such that $J \cap Y = \emptyset$.
Fix $a \in I$ such that $I \cap E_a$ is infinite.
As $I \cap E_a$ is convex and infinite there is a nonempty open interval $J \subseteq I \cap E_a$, so $J \cap Y = \emptyset$.
\medskip

\noindent
Suppose $(2)$ holds.
We identify $1_M$ with $1$ and $1_M\Z$ with $\Z$.
The $E$-class of $a \in M$ is infinite if and only if it is either contains an initial or final segment of $a + \Z$.
So $\{ a \in M : |E_a| < \aleph_0\}$ is $(\sM,\Z)$-definable.
Let $Y$ be the $(\sM,\Z)$-definable set of minimal elements of finite $E$-classes.
Suppose towards a contradiction that there are infinitely many finite $E$-classes.
\medskip

\noindent
First suppose there is $a \in M$ such that $a + \Z$ contains infinitely many finite $E$-classes.
After translating and reflecting if necessary suppose that $\N$ contains infinitely many finite $E$-classes.
Then $Y \cap \N$ is infinite.
Dp-minimality of $(\sM,\Z)$ and Fact~\ref{fact:Z} together yield $n \geq 1$, $b \in \N$, and a finite union $Z$ of cosets of $nM$ such that 
$$ \{ a \in \Z : a > b \} \cap Y = \{ a \in \Z : a > b \} \cap Z. $$
Applying saturation we obtain $b' \in M, b' > \N$ such that $(b,b') \cap Y = (b,b') \cap Z$.
So the $E$-class of each element of $(b,b')$ is finite, contradiction.
\medskip

\noindent
Now suppose that each coset of $\Z$ contains only finitely many finite $E$-classes.
Then $Y$ is infinite and $|Y \cap (a + \Z)| < \aleph_0$ for all $a \in M$.
This contradicts Lemma~\ref{lem:coset-intersect-1}.
\end{proof}

\begin{Thm}
\label{thm:gen-convex}
Suppose $\sM$ is dp-minimal and $X$ is an $\sM$-definable subset of $M$.
Then there is $n$ and an $\sM$-definable convex equivalence relation $E$ such that 
\begin{enumerate}
\item there are only finitely many finite $E$-classes,
\item every infinite $E$-class is open, and
\item  for every $a \in M$ we have $E_a \cap X = E_a \cap Y$ where $Y$ is a (necessarily finite) union of cosets of $nM$.
\end{enumerate}
\end{Thm}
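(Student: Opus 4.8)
The plan is to reduce to the local statement of Lemma~\ref{lem:local-behavior} and package the "local exponent $n$" and "local finite union of cosets" data into a single convex equivalence relation. First I would treat the dense and discrete cases uniformly by passing, if necessary, to a sufficiently saturated elementary extension $\sM^*$ of $\sM$; since being a union of cosets of $nM$ is preserved under restriction and since a convex equivalence relation on $\sM^*$ defined by a formula restricts to one on $\sM$ with the stated properties (the number of finite classes can only drop), it suffices to prove the theorem for $\sM^*$, and then both hypotheses $(1)$ and $(2)$ of Lemmas~\ref{lem:local-behavior} and \ref{lem:eq-0} are available. Fix the $n$ produced by Lemma~\ref{lem:local-behavior} for $X$.

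Next I would define $E$ directly: declare $a \sim a'$ (for $a \le a'$) if the interval $[a,a']$ is \emph{good}, meaning $[a,a'] \cap X = [a,a'] \cap \bigcup_{c \in A}(c + nM)$ for some finite $A \subseteq M$ --- equivalently, if $X$ agrees on $[a,a']$ with some finite union of cosets of $nM$; then take $E$ to be the equivalence relation whose classes are the maximal convex sets all of whose sub-intervals are good. To see this is $\sM$-definable one uses non-singularity (Fact~\ref{fact:dp-oag}): "$X$ agrees with some finite union of cosets of $nM$ on an interval" is, after fixing the bound on the number of cosets coming from compactness, a first-order condition on the endpoints. Transitivity of $\sim$ and convexity of classes are immediate from the definition (if $[a,b]$ and $[b,c]$ are good then so is $[a,c]$, combining the two finite unions). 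Property $(3)$ for an infinite class $E_a$: since $E_a$ is a directed union of good intervals all realizing the pattern "$X = $ finite union of $\le N$ cosets of $nM$", and there are only finitely many such patterns, a cofinal/coinitial argument in $E_a$ forces a single finite union $Y$ of cosets of $nM$ with $E_a \cap X = E_a \cap Y$. (A finite class is a singleton in the dense case and a finite set in the discrete case, and $(3)$ is trivial there.)

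Property $(2)$, that every infinite $E$-class is open, is where I would use Lemma~\ref{lem:local-behavior} most essentially: the lemma says every infinite interval contains an infinite good subinterval, so no infinite interval can be disjoint from the interior of the union of the $E$-classes; this shows the set of points lying in a non-open class is nowhere dense in the dense case, hence finite by Fact~\ref{fact:goodrick-nwd}, and one checks that such points in fact cannot lie in an infinite class at all. For property $(1)$, I would invoke Lemma~\ref{lem:eq-0}: its hypothesis is exactly that every infinite interval $I$ contains an infinite $E$-class, which again follows from Lemma~\ref{lem:local-behavior} (an infinite good subinterval $J$ of $I$ is contained in a single $E$-class, which must therefore be infinite). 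Lemma~\ref{lem:eq-0} then gives only finitely many finite classes.

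The main obstacle I anticipate is the definability of $E$ together with the simultaneous coherence of the local exponent: Lemma~\ref{lem:local-behavior} gives \emph{one} $n$ that works locally everywhere, but one must be careful that the finite unions of cosets of $nM$ witnessing goodness on different intervals can be glued, and that the resulting equivalence relation is genuinely first-order rather than merely type-definable. Both points are handled by the compactness/non-singularity argument bounding the number of cosets needed, but getting that bound to be uniform across the whole family $([a,b]\cap X : a\le b \in M)$ --- as opposed to a single interval --- is the step that needs the most care, and is the reason the statement is phrased with a single global $n$.
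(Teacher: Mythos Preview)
Your overall strategy matches the paper's, but there is a real gap in the construction of $E$: the relation ``$[a,a']$ is good'' is \emph{not} transitive, and ``combining the two finite unions'' does not work. If $[a,b]\cap X=[a,b]\cap Y_1$ and $[b,c]\cap X=[b,c]\cap Y_2$, a coset lying in $Y_2\setminus Y_1$ may meet $[a,b]$ at points not in $X$, so $Y_1\cup Y_2$ need not witness goodness of $[a,c]$, and in general no finite union of cosets of $nM$ does. (Concretely, in $\Z$ with $n=2$, let $X$ consist of the even integers in $[0,10]$ together with the odd integers in $[11,19]$: then $[0,10]$, $\{10,11\}$ and $[11,19]$ are each good but $[0,11]$ is not.) Consequently your ``maximal totally-good convex sets'' can overlap and do not partition $M$; the directed-union argument for property~(3) collapses with them.

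The missing idea is precisely what the paper supplies. One declares $a<b$ to be $E$-related only when there exist $a'<a$ and $b'>b$ such that $(a',b')\cap X$ agrees with a finite union of cosets of $nM$, the buffer intervals $(a',a)$ and $(b,b')$ each contain at least $n$ points, \emph{and} $a',b'$ lie in the same coset of the maximal $n$-regular convex subgroup $R_n$. The role of $R_n$ together with the buffers is Lemma~\ref{lem:Rn-index}: inside a coset of $R_n$, any interval with at least $n$ elements meets every coset of $nR_n$, so when two witnessing intervals overlap on such a buffer the associated unions $Y_0,Y_1$ are forced to contain exactly the same cosets. This is what makes transitivity go through, and it simultaneously yields property~(3) with a single $Y$ per class and makes infinite classes open via the buffers. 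Once $E$ is defined this way, your appeals to Lemma~\ref{lem:local-behavior} and Lemma~\ref{lem:eq-0} for property~(1) are exactly what the paper does.
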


\begin{proof}
Let $\Sa M \prec \Sa N$ be sufficiently saturated and $X'$ be the subset of $N$ defined by same formula as $X$.
Lemma~\ref{lem:local-behavior} yields $n$ such that every infinite interval $I \subseteq N$ has an infinite subinterval $J$ such that $J \cap X' = J' \cap Y$ for some union $Y$ of cosets of $nN$.
Let $R_n$ be as above.
Recall that $R_n$ is $(M,+,<)$-definable.
We define a binary relation $E$ on $M$.
We declare $(a,a) \in E$ for all $a$, and if $a < b$ then $a,b$ are $E$-related if and only if there are $a' < a$ and $b' > b$ such that:
\begin{enumerate}
\item $a',b'$ lie in the same coset of $R_n$, 
\item $(a',a)$ and $(b,b')$ both have at least $n$ elements, and
\item $(a',b') \cap X = (a',b') \cap Y$ for some union $Y$ of cosets of $nM$.
\end{enumerate}
Finally, if $a > b$ then $(a,b) \in E$ if and only if $(b,a) \in E$.
Note that $E$ is $\sM$-definable.
We now show that $E$ is an equivalence relation.
It suffices to show that $E$ is transitive.
Fix distinct $a,b,c \in M$ such that $a,b$ and $b,c$ are both $E$-equivalent.
There are several cases to consider.
Without loss of generality we suppose $a < b$, the case when $a > b$ is handled in the same way.
The cases when $a < c  < b$ and $c < a < b$ are easy.
\medskip

\noindent
We treat the case when $a < b < c$.
Fix $a' < a$, $b_0 < b < b_1$, $c < c'$ and finite unions $Y_0,Y_1$ of cosets of $nM$ such that $(a',a)$, $(b_0,b)$, $(b,b_1)$, and $(c,c')$ all have at least $n$ elements and $(b_0,c') \cap X = (b_0,c') \cap Y_0$, $(a',b_1) \cap X = (a',b_1) \cap Y_1$.
If $Y_0 = Y_1$ then $(a',c') \cap X = (a',c') \cap Y_1$ and $a,c$ are $E$-equivalent.
We therefore show that $Y_0 = Y_1$.
We first reduce to the case when $a,b,c$ are in $R_n$.
Note that $a',a,b_0,b,b_1,c,c'$ all lie in the same coset of $R_n$.
After replacing $X$ with $X^*$ and $E$ with $E^*$ we suppose that $a',a,b_0,b,b_1,c,c'$ are all in $R_n$.
The intersection of any coset of $nM$ with $R_n$ is a coset of $nR_n$ so it is now enough to show that $Y_0$ and $Y_1$ contain the same cosets of $nR_n$.
\medskip

\noindent
We show that every coset of $nR_n$ contained in $Y_0$ is contained in $Y_1$, the same argument yields the other direction.
Fix $d \in M$ and suppose that $d + nR_n \subseteq Y_0$.
As $(b_0,b)$ has at least $n$ elements Lemma~\ref{lem:Rn-index} shows that $d + nR_n$ intersects $(b_0,b)$.
Hence $d + nR_n$ intersects $Y_1$ which implies $d + nR_n \subseteq Y_1$.
\medskip

\noindent
We now show that $E$ is a convex equivalence relation.
Suppose $a < b$ are $E$-equivalent and fix $a < c < b$.
Fix $a' < a < b < b'$ such that $a',a,b,b'$ all lie in the same coset of $R_n$, $(a',a)$, $(b,b')$ both have at least $n$ elements, and $(a',b') \cap X = (a',b') \cap Y$ for some union $Y$ of cosets of $nM$.
Each coset of $R_n$ is convex so $c$ lies in the same coset of $R_n$ as $a',b'$.
Furthermore $(c,b')$ has at least $n$ elements and $(a',b') \cap X = (a',b') \cap Y$.
So $a,c$ are $E$-equivalent.
\medskip

\noindent
We show that every infinite $E$-class is open.
This is trivially true when $(M,+,<)$ is discrete so suppose $(M,+,<)$ is dense.
Then any nonempty open interval is infinite.
Suppose $E_a$ is infinite.
After possibly reflecting there is $b > a$ such that $b \in E_a$.
Fix $a' < a$ and $b' > b$ such that $a',b'$ lie in the same coset of $R_n$ and $(a',b') \cap X = (a',b') \cap Y$ for some union $Y$ of cosets of $nM$.
It is easy to see that if $a' < c < b'$ then $c \in E_a$ so $(a',b') \subseteq E_a$.
\medskip

\noindent
It remains to show that there are only finitely many finite $E$-classes.
Let $E'$ be the convex equivalence relation on $N$ defined by the same formula as $E$.
It suffices to show that $E'$ has only finitely many finite classes.
By Lemma~\ref{lem:eq-0} it suffices to fix an infinite interval $I \subseteq N$ and show that $E'_a \cap I$ is infinite for some $a \in I$.
Applying Lemma~\ref{lem:local-behavior} we obtain an infinite subinterval $J$ of $I$ such that $J \cap X' = J \cap Y$ for some union $Y$ of cosets of $nN$.
Applying saturation we obtain elements $a' < a < b < b'$ of $J$ such that $(a,b)$ is infinite, $(a',a)$, $(b,b')$ both have at least $n$ elements, and $a',b'$ lie in the same coset of $R_n$.
So $a,b$ are $E$-equivalent and $E_a$ contains $(a,b)$ and is therefore infinite.
\end{proof}

\noindent Theorem~\ref{thm:gen} is a corollary to Theorem~\ref{thm:gen-convex}.
When $(M,+)$ is divisible Theorem~\ref{thm:gen} shows that any $\sM$-definable subset of $M$ is the union of an open set and a finite set, this is already proven in \cite{Simon-dp}.

\begin{Thm}\label{thm:gen}
Suppose $\mathscr{M}$ is dp-minimal, $(M,+,<)$ is dense, and $X$ is an $\sM$-definable subset of $M$.
Then $X$ is a finite union of sets of the form $(nM + a) \cap U$ where $U$ is $\sM$-definable and open.
\end{Thm}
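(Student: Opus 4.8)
The plan is to obtain Theorem~\ref{thm:gen} as a fairly direct consequence of Theorem~\ref{thm:gen-convex}, by partitioning $M$ according to how $X$ meets the classes of the convex equivalence relation produced there.

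First I would apply Theorem~\ref{thm:gen-convex} to $X$ to get $n$ and an $\sM$-definable convex equivalence relation $E$ with only finitely many finite classes, with every infinite class open, and with $E_a \cap X = E_a \cap Y_a$ for each $a \in M$, where $Y_a$ is a finite union of cosets of $nM$. Since $\sM$ is dp-minimal, $(M,+)$ is non-singular by Fact~\ref{fact:dp-oag}, so $M/nM$ is finite and there are only finitely many subsets of $M$ which are unions of cosets of $nM$; fix an enumeration $Y^{(1)},\dots,Y^{(s)}$ of them, each an $\sM$-definable finite union of cosets of $nM$. Because $(M,+,<)$ is dense, a convex set is finite only if it is a singleton, so an $E$-class is finite exactly when it is $\{a\}$; consequently ``$E_a$ is infinite'' is the $\sM$-definable condition $\exists b\,(b \neq a \wedge E(a,b))$.

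For $1 \le j \le s$ set
\[
W_j \;=\; \{\, c \in M : E_c \text{ is infinite and } E_c \cap X = E_c \cap Y^{(j)} \,\}.
\]
Each $W_j$ is then $\sM$-definable and is a union of infinite, hence open, $E$-classes, so $W_j$ is open; every point lying in an infinite $E$-class lies in some $W_j$; and for $c \in W_j$ one checks $c \in X$ iff $c \in Y^{(j)}$, so $X \cap W_j = Y^{(j)} \cap W_j$, a finite union of sets $(nM+a) \cap W_j$. Letting $Z$ denote the finite union of the finite $E$-classes, each $b \in X \cap Z$ equals $(0M + b) \cap M$, a set of the required shape with $n=0$ (so that $0M = \{0\}$) and $U = M$. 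Hence
\[
X \;=\; \bigcup_{b \in X \cap Z}\big((0M+b)\cap M\big)\ \cup\ \bigcup_{j=1}^{s}\big(Y^{(j)}\cap W_j\big)
\]
exhibits $X$ as a finite union of sets $(nM+a)\cap U$ with $U$ open and $\sM$-definable.

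All of the real work is in Theorem~\ref{thm:gen-convex}, so this argument is essentially bookkeeping; the point where some care is needed is the $\sM$-definability of the sets $W_j$. This uses the finiteness of $M/nM$, to ensure there are only finitely many patterns $Y^{(j)}$ to quantify over, and the density of $(M,+,<)$, which is what makes ``$E_a$ is infinite'' equivalent to the first-order condition $E_a \neq \{a\}$ and also guarantees that the infinite classes are open. I do not expect any further obstacle.
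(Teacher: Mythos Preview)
Your proof is correct and follows essentially the same route as the paper: apply Theorem~\ref{thm:gen-convex}, dispose of the finitely many singleton $E$-classes with $n=0$, and collect the remaining (open) $E$-classes into finitely many $\sM$-definable open sets according to how $X$ meets cosets of $nM$ on each class. The only cosmetic difference is in the bookkeeping: the paper indexes its open sets by coset representatives $a\in A$, setting $U_a=\{b\notin F: a+nM\subseteq E_b\cap X\}$, whereas you index by the full pattern $Y^{(j)}\subseteq M/nM$, which produces up to $2^{|M/nM|}$ open sets rather than $|M/nM|$ many---but the idea is the same.
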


\begin{proof}
When $X$ is finite let $n = 0$ and $U = M$.
Suppose $X$ is infinite.
Let $n$ and $E$ be as in Theorem~\ref{thm:gen-convex}.
Let $A$ be a set of representatives of the cosets of $nM$.
Let $F := \{ a \in M : E_a = \{a\} \}$.
So $F$ is the set of $a \in M$ with finite $E$-class and $F$ is finite.
For each $a \in A$ let $U_a$ be the union of all $b \in M$ such that $b \notin F$ and $(a + nM) \cap X \subseteq E_b \cap X$.
Each $U_a$ is a union of infinite $E$-classes and is therefore open.
We have
$$ X = (F \cap X) \cup \bigcup_{a \in A} [U_a \cap (a +nM)]. $$
\end{proof}

\subsection{Proof of Theorem~\ref{thm:cc}}
We prove the left to right implication of Theorem~\ref{thm:cc}.
A cut $C$ is \textbf{valuational} if $C + a = C$ for some positive $a \in C$, and \textbf{non-valuational} otherwise.
An $\sM$-definable convex subgroup of $(M,+,<)$ is valuational if it contains an $\sM$-definable non-trivial convex subgroup, and non-valuational otherwise.
We say that $\sM$ is valuational if there is a non-trivial $\sM$-definable convex subgroup, and non-valuational otherwise.

\begin{Lem}\label{lem:nonval}
Suppose $H$ is an $\sM$-definable convex subgroup of $(M,+,<)$.
Then $H$ is valuational if and only if there is an $\sM$-definable valuational cut $C$ which lies in some coset of $H$.
In particular $\sM$ is valuational if and only if there is an $\sM$-definable valuational cut.
\end{Lem}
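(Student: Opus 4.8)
The plan is to pass, in both directions, between an $\sM$-definable valuational cut and an $\sM$-definable non-trivial convex subgroup, via two dictionary constructions: to a cut $C$ attach its stabilizer $G_C := \{a \in M : C + a = C\}$, and to an $\sM$-definable non-trivial convex subgroup $K$ attach the cut $C_K$ which is the downward closure $\{x \in M : x \le k$ for some $k \in K\}$. First I would verify the two preliminary facts. If $C$ is $\sM$-definable then $G_C$ is an $\sM$-definable subgroup, and it is convex: for $0 < a < a'$ with $a' \in G_C$, downward-closedness of $C$ yields $C - a \subseteq C \subseteq C + a \subseteq C + a' = C$, so $a \in G_C$, and negative elements follow by symmetry. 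Moreover $C$ is valuational exactly when $G_C \ne \{0\}$, and a proper cut has $G_C \ne M$, so a proper valuational $\sM$-definable cut produces a non-trivial $\sM$-definable convex subgroup. Dually, $C_K$ is an $\sM$-definable cut with no supremum (a supremum $s$ would satisfy $s + a = s$ for any positive $a \in K$, which is impossible, and $C_K \ne M$ since $K \ne M$), and it is valuational because $C_K + a = C_K$ for every positive $a \in K$, of which there is at least one since $K$ is non-trivial.

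For the right-to-left implication, suppose $C$ is an $\sM$-definable valuational cut lying in the coset $b + H$. I would show $G_C \subsetneq H$; since $G_C$ is a non-trivial $\sM$-definable convex subgroup, this exhibits $H$ as valuational. To see $G_C \subseteq H$: if some positive $a \in G_C$ were outside $H$ it would exceed every element of $H$, so, fixing $c \in C \cap (b + H)$, the element $c + a$---which lies in $C$ because $a \in G_C$---would be strictly above all of $b + H$; as $C$ is downward closed this forces $b + H \subseteq C$, contradicting that $b + H$ meets $M \setminus C$. To see $G_C \ne H$: otherwise $C$ is invariant under every translation by $H$, hence a union of cosets of $H$, so no coset of $H$ meets both $C$ and $M \setminus C$, again contradicting that $C$ lies in $b + H$.

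For the left-to-right implication, suppose $H$ is valuational and fix an $\sM$-definable non-trivial convex subgroup $K$ properly contained in $H$. Put $C := C_K$; by the preliminaries this is an $\sM$-definable valuational cut. It lies in the coset $0 + H = H$: indeed $0 \in H \cap C_K$, and since $K \subsetneq H$ there is a positive $h \in H \setminus K$, which exceeds every element of $K$ and so lies in $H \cap (M \setminus C_K)$. The final sentence is the case $H = M$: being valuational as a convex subgroup then coincides with the definition of $\sM$ being valuational, and a cut lies in a coset of $M$ precisely when it is a proper cut, so the equivalence reads off.

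I expect the step needing the most care to be $G_C \subsetneq H$ in the right-to-left direction---and, within it, the point that strict containment, not mere containment, is what must be proved: the content is that a cut genuinely straddling a coset of $H$ cannot have its stabilizer stabilize $H$, which is exactly what rules out the degenerate case in which $C$ is a union of cosets of $H$. A minor but real bookkeeping point throughout is to read ``valuational cut'' as ``proper valuational cut'', so that the empty cut and $M$ itself do not intrude; similarly, ``$H$ is valuational'' is to be read as ``$H$ \emph{properly} contains an $\sM$-definable non-trivial convex subgroup'', which is what the statement forces.
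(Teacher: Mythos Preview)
Your proof is correct and follows the same two constructions the paper uses: to a cut attach its stabilizer, and to a non-trivial convex subgroup attach its downward closure. The paper's argument is terser (it simply asserts that $\{a : C+a=C=C-a\}$ is a non-trivial convex subgroup of $H$ and that the downward closure of $K$ lies in $H$), while you supply the verifications; your observation that one must read ``contains'' as ``properly contains'' in the definition of a valuational convex subgroup---and correspondingly prove $G_C \subsetneq H$ rather than merely $G_C \subseteq H$---is a genuine point the paper glosses over, and without it the forward direction (``$C$ lies in $H$'') does not go through when $K=H$.
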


\begin{proof}
Suppose $H$ is valuational.
Let $K$ be a non-trivial definable convex subgroup of $H$ and let $C$ be the downwards closure of $K$.
Then $C + a = C$ for any positive element $a \in K$.
So $C$ is valuational.
Observe that $C$ lies in $H$.
\medskip

\noindent
Suppose $C$ is a definable valuational cut which lies in $b + H$.
It is easy to see that 
$ \{ a \in M : C + a = C = C - a \} $
is a non-trivial definable convex subgroup of $H$.
\end{proof}

\noindent
If $(M,+,<)$ is non-archimedean then there are $a,b > 0$ such that $na < b$ for all $n$, in which case the convex hull of $a\Z$ is a nontrivial convex subgroup of $(M,+,<)$.
Conversely, if $K$ is a convex subgroup of $(M,+,<)$, and $a,b$ are positive elements of $M$ such that $a \in K, b \notin K$, then $na < b$ for all $n$.
Hence $(M,+,<)$ is archimedean if and only if $(M,+,<)$ does not have a non-trivial convex subgroup, equivalently if every cut in $M$ is valuational.
Thus the property of being non-valuational is a definable analogue of the archimedean property.

\begin{Lem}
\label{lem:discrete-max}
Suppose $(M,+,<)$ is discrete and $C$ is a cut in $M$.
Then $C$ is non-valuational if and only if $C$ has a maximal element.
Furthermore $\sM$ is non-valuational if and only if every nonempty definable subset of $M$ which is bounded above has a maximal element.
\end{Lem}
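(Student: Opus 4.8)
The plan is to establish the statement about cuts first, and then derive the statement about $\sM$ from it together with Lemma~\ref{lem:nonval}. Throughout I use that a discrete $(M,+,<)$ has a least positive element $1_M$, and that, by the definition of a cut, any cut which is not of the form $(\infty,a]$ has no supremum in $M$. For the cut statement, first suppose $C$ has a maximal element $a$, so $C=(\infty,a]$; then $a+b>a$ for every $b>0$, so $a+b\in C+b$ while $a+b\notin C$, hence $C+b\neq C$ for every positive $b$ and $C$ is non-valuational. Conversely, suppose $C$ has no maximal element. By discreteness $C$ is not of the form $(\infty,a]$, so $C$ has no supremum, and it remains to check that $C$ is fixed by translation by $1_M$, which makes it valuational. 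For the inclusion $C+1_M\subseteq C$: given $c\in C$, lack of a maximal element gives $c'\in C$ with $c'>c$, and discreteness gives $c'\geq c+1_M$, so $c+1_M\in C$ by downward closure. For $C\subseteq C+1_M$: given $c\in C$ we have $c-1_M<c$, so $c-1_M\in C$ and $c=(c-1_M)+1_M\in C+1_M$.

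For the statement about $\sM$, suppose first that $\sM$ is valuational and fix a non-trivial $\sM$-definable convex subgroup $K$ of $(M,+,<)$. Being a non-trivial subgroup of a torsion-free group, $K$ is infinite and hence has no maximal element; being a proper convex subgroup it is bounded above (choose $b>0$ with $b\notin K$; convexity forces $k<b$ for all $k\in K$). So $K$ is a nonempty $\sM$-definable bounded-above set with no maximum. Conversely, suppose some nonempty $\sM$-definable bounded-above $S\subseteq M$ has no maximal element. I claim $S$ has no supremum: if $a=\sup S$, then $a\notin S$ since $S$ has no maximum, so every $s\in S$ satisfies $s<a$, hence $s\leq a-1_M$ by discreteness, so $a-1_M$ is an upper bound for $S$ lying strictly below $\sup S$, a contradiction. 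Now let $C=\{x\in M:x\leq s\text{ for some }s\in S\}$ be the downward closure of $S$. Then $C$ is $\sM$-definable and has the same upper bounds as $S$, hence also has no supremum, so $C$ is a cut in the required sense; and $C$ has no maximal element, since a maximum of $C$ would lie below some element of $S$ and bound $S$ from above, and would therefore be a maximum of $S$. By the cut statement $C$ is valuational, so $\sM$ is valuational by Lemma~\ref{lem:nonval}. This shows that $\sM$ is valuational if and only if some nonempty $\sM$-definable bounded-above subset of $M$ has no maximal element, which is equivalent to the assertion of the lemma.

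The one step that needs a little care is checking, in the last paragraph, that the downward closure of $S$ is genuinely a cut in the sense used in the paper, i.e. that $S$ has no supremum; this is precisely where discreteness enters, through the least positive element $1_M$, and it is also why the statement restricts to discrete $(M,+,<)$. The remaining arguments are routine manipulations of downward-closed sets, together with the observation that in the discrete setting a cut is valuational precisely when it is invariant under translation by $1_M$.
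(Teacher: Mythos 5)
Your proof is correct and follows essentially the same route as the paper's: translation by the least positive element $1_M$ handles the statement about cuts, and the statement about $\sM$ is reduced to it via Lemma~\ref{lem:nonval} and downward closures, with you additionally spelling out (correctly) why a downward closure with no maximum is genuinely a cut in the discrete setting. The only cosmetic difference is that in the valuational direction you exhibit the non-trivial convex subgroup itself, rather than its downward closure, as the nonempty definable bounded-above set without a maximum.
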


\begin{proof}
Suppose $a$ is maximal in $C$.
Then $a + b \notin C$ for any positive $b \in M$, so $C$ is non-valuational.
Suppose $C$ is non-valuational.
Then there is $a \in C$ such that $a + 1_M \notin C$.
Then $a$ is maximal in $C$ as $a + 1_M$ is the successor of $a$.
\medskip

\noindent
We now treat the second claim.
Suppose $C$ is a valuational cut in $\sM$ and $a \in M_{>0}$ satisfies $b + a \in C$ for any $b \in C$.
As $b < b + a$ for all $b \in C$, $C$ does not have a maximum.
Suppose $\sM$ is non-valuational.
Suppose $X$ is a nonempty definable subset of $M$ which is bounded above.
Let $C$ be the downwards closure of $X$.
Then $C$ has a maximal element, which is the maximal element of $X$.
\end{proof}

\begin{Prop}
\label{prop:nonval1}
Suppose $\mathscr{M}$ is dp-minimal and $H$ is an $\sM$-definable convex subgroup.
Then the following are equivalent:
\begin{enumerate}
\item $H$ is valuational,
\item there is an $\sM$-definable convex equivalence relation $E$ such that every coset of $H$ contains infinitely many infinite $E$-classes,
\item there is an $\sM$-definable convex equivalence relation $E$ with infinitely many $E$-classes $E_a$ such that $|E_a| \geq \aleph_0$ and $E_a$ does not contain $a + H$.
\end{enumerate}
In particular $\mathscr{M}$ is valuational if and only if there is an $\mathscr{M}$-definable convex equivalence relation with infinitely many infinite equivalence classes.
\end{Prop}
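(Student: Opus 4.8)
The plan is to prove the cycle of implications $(1) \Rightarrow (2) \Rightarrow (3) \Rightarrow (1)$, with the last being the substantive one. For $(1) \Rightarrow (2)$: if $H$ is valuational, fix an $\sM$-definable non-trivial convex subgroup $K \subsetneq H$. The equivalence relation $E := {}$ ``equivalence modulo $K$'' (i.e. $aEb \iff a - b \in K$) is $\sM$-definable and convex, every $E$-class is a coset of $K$ and hence infinite, and since $K$ has infinite index in $H$ (as $K \neq H$ and these are convex subgroups of an ordered abelian group, the index is infinite), each coset of $H$ is partitioned into infinitely many cosets of $K$, i.e. infinitely many infinite $E$-classes. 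So $(2)$ holds. The implication $(2) \Rightarrow (3)$ is nearly immediate: take any coset $b + H$; it contains infinitely many infinite $E$-classes, and at most one of them can contain all of $b + H$ — in fact if $b+H$ itself were a single $E$-class then it would be the only one in that coset, so among the infinitely many we can discard the (at most one) that equals $b+H$ and still have infinitely many $E$-classes $E_a$ with $|E_a| \geq \aleph_0$ and $E_a \neq a + H$; since $E_a$ is convex and contained in $a+H$, $E_a \neq a+H$ forces $E_a$ to be a proper convex subset of the coset, so it does not contain $a+H$.

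The main work is $(3) \Rightarrow (1)$. Suppose $E$ is an $\sM$-definable convex equivalence relation with infinitely many infinite classes $E_a$ each failing to contain $a + H$; we must produce an $\sM$-definable non-trivial convex subgroup of $H$, equivalently (by Lemma~\ref{lem:nonval}) an $\sM$-definable valuational cut lying in a coset of $H$. Pass to a sufficiently saturated $\sM \prec \sN$; the same formula defines a convex equivalence relation $E'$ on $N$ with the analogous properties, and it suffices to find an $\sN$-definable valuational cut in a coset of $H(N)$, the convex subgroup of $N$ defined by the same formula as $H$. The idea is to look at the endpoints of the infinite $E'$-classes. For each $a$ with $E'_a$ infinite and $E'_a \not\supseteq a + H(N)$, at least one of the two endpoint-cuts of $E'_a$ (the lower cut $C_a^- := \{x : x < E'_a\}$ or the cut $C_a^+$ cut out above $E'_a$) lies strictly inside a single coset of $H(N)$: since $E'_a$ is convex, infinite, but does not span its $H(N)$-coset, one of its two ``sides'' within that coset is nonempty. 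Now consider the $\sN$-definable family of all these endpoint cuts. By dp-minimality $\sN$ is dp-minimal, and by Fact~\ref{fact:dp-oag} (non-singularity) one expects the family of germs / the family of these cuts to be tame; concretely I would argue that since there are infinitely many such $E'$-classes inside (perhaps after translating) a single coset of $H$ — this uses saturation to collapse infinitely many classes into one coset, exactly as in the proof of Theorem~\ref{thm:gen-convex} via Lemma~\ref{lem:eq-0} — the definable set of lower endpoints accumulates somewhere, and the supremum cut of an $\sN$-definable increasing chain of such endpoints is an $\sN$-definable valuational cut lying in a coset of $H(N)$. Then Lemma~\ref{lem:nonval} gives an $\sN$-definable (hence, pulling the defining formula back, an $\sM$-definable) non-trivial convex subgroup of $H$, i.e. $H$ is valuational.

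The last sentence (``$\sM$ is valuational iff there is an $\sM$-definable convex equivalence relation with infinitely many infinite classes'') is then just the special case $H = M$: taking $H = M$, condition $(3)$ becomes ``there is an $\sM$-definable convex equivalence relation with infinitely many infinite $E$-classes'' (the requirement $E_a \not\supseteq a + M = M$ is automatic, since an $E$-class equal to all of $M$ forces there to be only one class), condition $(1)$ becomes ``$M$ is valuational'', which by definition means $\sM$ has a non-trivial $\sM$-definable convex subgroup, i.e. $\sM$ is valuational.

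The hard part will be $(3) \Rightarrow (1)$, and within it the key obstacle is passing from ``infinitely many infinite $E'$-classes in $N$'' to ``infinitely many such classes inside a single coset of $H(N)$'' and then extracting a genuinely $\sN$-definable valuational cut from their endpoints — this is where dp-minimality is really used, via the germ-finiteness circle of ideas (Facts~\ref{fact:germ}–type reasoning adapted to the ordered setting, or directly Lemma~\ref{lem:eq-0} and the Goodrick nowhere-dense lemma Fact~\ref{fact:goodrick-nwd} in the dense case, together with Facts~\ref{fact:Z} and~\ref{fact:shelah} in the discrete case), and one must be careful that the resulting cut is valuational rather than merely non-trivial, which is why one tracks that the endpoints stay inside one $H(N)$-coset.
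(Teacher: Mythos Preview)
Your implications $(1)\Rightarrow(2)\Rightarrow(3)$ match the paper and are fine. The gap is in $(3)\Rightarrow(1)$.

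Your concrete plan --- pass to a saturated $\Sa N$, collapse infinitely many bad classes into a single $H(N)$-coset by saturation, then take the ``supremum cut of an $\Sa N$-definable increasing chain of such endpoints'' --- does not produce a valuational cut. There is no reason the supremum of a definable set of endpoints should be invariant under translation by some positive element; for a toy picture, if $H=M$ and the $E$-classes are unit intervals $[n,n+1)$, the endpoint set is just a copy of $\Z$ and no supremum along it is valuational. The reference to Lemma~\ref{lem:eq-0} is also off: that lemma controls \emph{finite} $E$-classes under a density hypothesis, and plays no role here.

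The paper avoids all of this by working directly in $\sM$ and using the correct Goodrick statement, namely Fact~\ref{fact:goodrick} (about definable \emph{families of cuts}), not Fact~\ref{fact:goodrick-nwd}. After reflecting so that infinitely many of the bad infinite classes $E_a$ miss a final segment of $a+H$, let $\Cal B$ be the family of downwards closures of these classes. In the dense case $\Cal B$ is an $\sM$-definable family of cuts, each lying in a coset of $H$; by Lemma~\ref{lem:eq-nwd} $\Cal B$ is nowhere dense, so by Fact~\ref{fact:goodrick} it cannot consist entirely of non-valuational cuts. Hence some cut in $\Cal B$ is valuational and lies in a coset of $H$, and Lemma~\ref{lem:nonval} finishes. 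No saturation or supremum construction is needed: Goodrick's lemma hands you the valuational cut directly.

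In the discrete case the argument is different again and does not go through Fact~\ref{fact:Z}. One adjoins the convex subgroup $\Z=1_M\Z$ (so $(\sM,\Z)$ is dp-minimal by Fact~\ref{fact:shelah}), observes $\Cal B$ is $(\sM,\Z)$-definable, and argues by contradiction: if $H$ were non-valuational then by Lemma~\ref{lem:discrete-max} each cut in $\Cal B$ has a maximum, the set $Y$ of these maxima is infinite and meets each $\Z$-coset at most once, and Lemma~\ref{lem:coset-intersect} then contradicts dp-minimality of $(\sM,\Z)$.
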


\noindent
Proposition~\ref{prop:nonval1} requires Lemma~\ref{lem:eq-nwd} and Fact~\ref{fact:goodrick}.
We leave the easy proof of Lemma~\ref{lem:eq-nwd} to the reader.

\begin{Lem}
\label{lem:eq-nwd}
Suppose $(M,+,<)$ is dense.
Suppose $\Cal E$ is a collection of pairwise disjoint infinite convex subsets of $M$ which are bounded above.
Let $\Cal C$ be the collection of downwards closures of elements of $\Cal E$.
Then $\Cal C$ is a nowhere dense family of cuts.
\end{Lem}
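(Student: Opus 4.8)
The plan is to work inside the order completion $\Cal C(M)$ of $(M,<)$ and to use the standard fact that a subset $S$ of a topological space is nowhere dense if and only if every nonempty open set contains a nonempty open subset disjoint from $S$. So I would fix a nonempty open interval $J \subseteq \Cal C(M)$ and produce a nonempty open subinterval disjoint from $\Cal C$. If $J \cap \Cal C = \emptyset$ then $J$ itself works, so assume $\hat E \in J \cap \Cal C$ for some $E \in \Cal E$, where $\hat E$ denotes the downwards closure of $E$, i.e.\ the cut $\sup_{\Cal C(M)} E$. Note $\hat E$ is neither the least nor the greatest element of $\Cal C(M)$, since $E$ is nonempty and bounded above; so, shrinking $J$ to a basic open interval around $\hat E$, we may take $J = (D_1,D_2)$ for some $D_1,D_2 \in \Cal C(M)$ with $D_1 < \hat E < D_2$.

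The first step is to find room strictly inside $E$ that still lies in $J$. Since $\hat E = \sup_{\Cal C(M)} E$ and $D_1 < \hat E$, the cut $D_1$ is not an upper bound of $E$, so $E \setminus D_1 \neq \emptyset$; a short argument using that $E$ is convex and infinite, $M$ is dense, and $D_1$ is a genuine cut (hence not of the form $(\infty,a)$ with $a \in M$) then shows that $E \setminus D_1$ is in fact infinite. Choose $a_1 < a_2 < a_3$ in $E \setminus D_1$ and let $K$ be the open interval $(a_1,a_3)$ of $\Cal C(M)$, i.e.\ the set of cuts strictly between the cuts $a_1$ and $a_3$. Then $K$ is nonempty, as it contains the cut $a_2$; we have $K \subseteq J$, because any $C \in K$ satisfies $C > a_1 > D_1$ (using $a_1 \notin D_1$) and $C < a_3 \le \hat E < D_2$ (using $a_3 \in E$); and $\hat E \notin K$, again because $a_3 \in E$ gives $\hat E \ge a_3$.

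It remains to check $K \cap \Cal C = \emptyset$. Suppose $\hat{E'} \in K$ for some $E' \in \Cal E$, so that $a_1 < \hat{E'} < a_3$ in $\Cal C(M)$. From $\sup_{\Cal C(M)} E' = \hat{E'} > a_1$ we get some $e'_0 \in E'$ with $e'_0 > a_1$; from $\hat{E'} < a_3$ we get $e' < a_3$ for every $e' \in E'$, in particular $e'_0 < a_3$. Hence $e'_0$ lies in the interval $(a_1,a_3)$ of $M$, which is contained in $E$ by convexity since $a_1,a_3 \in E$. As the members of $\Cal E$ are pairwise disjoint this forces $E' = E$, whence $\hat{E'} = \hat E \notin K$, a contradiction. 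Therefore $K$ is a nonempty open subinterval of $J$ disjoint from $\Cal C$, and $\Cal C$ is nowhere dense in $\Cal C(M)$.

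The only slightly delicate parts are the bookkeeping with cuts — verifying that $E \setminus D_1$ is infinite and translating inequalities in $\Cal C(M)$ back into statements about honest elements of $M$ — but these are routine and I would not expect a genuine obstacle. The whole argument is powered by the pairwise disjointness of the convex sets in $\Cal E$, which is precisely what prevents the cuts $\hat E$ from accumulating densely: inside each $E$ there is always an interval's worth of cuts that no other member of $\Cal C$ can reach.
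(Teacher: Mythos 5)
Your proof is correct and follows essentially the same route as the paper's (which is left to the reader there): shrink the given open set to an interval whose endpoints lie inside a single $E \in \Cal E$, and observe that pairwise disjointness plus convexity forces the cut of every other member of $\Cal E$ to lie entirely above or entirely below that interval, while the cut of $E$ itself lies above it. Your extra care with the order topology on $\Cal C(M)$ and with ruling out the degenerate case $D_1 = (-\infty,a)$ is exactly the routine bookkeeping the paper elides.
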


\noindent We now recall a result of Goodrick~\cite[Lemma 3.3]{goodrick} which  generalizes Fact~\ref{fact:goodrick-nwd}.

\begin{Fact}\label{fact:goodrick}
Suppose $\mathscr{M}$ is dp-minimal and $(M,+,<)$ is dense.
Let $\mathcal{C}$ be an $\mathscr{M}$-definable family of cuts.
If $\mathcal{C}$ has infinitely many non-valuational elements then $\mathcal{C}$ is somewhere dense.
\end{Fact}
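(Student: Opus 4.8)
The plan is to prove the contrapositive: assuming the $\sM$-definable family $\mathcal{C}$ of cuts is nowhere dense in $\mathcal{C}(M)$, I would show that $\mathcal{C}$ has only finitely many non-valuational elements. The first move is to dispose of the cuts that have a maximum. Every such cut is automatically non-valuational: if $m=\max C$ and $a>0$ then $m+a\in(C+a)\setminus C$, so $C+a\neq C$; moreover it equals $(-\infty,m]$ and is determined by $m$. Hence the set $X=\{\,x\in M:\ x=\max C_t\text{ for some }t\,\}$ is $\sM$-definable and is in definable bijection with those members of $\mathcal{C}$ having a maximum. Since $x\mapsto(-\infty,x]$ is an order isomorphism of $M$ onto a dense subset of $\mathcal{C}(M)$, it is a topological embedding, so a somewhere-dense $X$ would make the corresponding subfamily of $\mathcal{C}$ somewhere dense; thus nowhere-density of $\mathcal{C}$ forces $X$ to be nowhere dense in $M$. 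By Fact~\ref{fact:goodrick-nwd} $X$ is finite, so only finitely many members of $\mathcal{C}$ have a maximum. It therefore suffices to bound the non-valuational cuts with \emph{no} maximum, i.e. the non-valuational gap cuts.

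Second, I would attack the gap cuts by a direct dp-minimality obstruction, in the spirit of Lemma~\ref{lem:coset-intersect}. Suppose toward a contradiction that $\mathcal{C}$ contains infinitely many non-valuational gap cuts. Passing to a sufficiently saturated elementary extension and applying Ramsey together with $\nip$, extract a densely-indexed sequence $(C_i:i\in\Q)$ of distinct non-valuational gap cuts from $\mathcal{C}$, increasing under inclusion and order-indiscernible over the defining parameters (the conditions ``non-valuational'' and ``no maximum'' are elementary in the index parameter, hence persist along the sequence). The standing hypothesis that $\mathcal{C}$ is nowhere dense supplies \emph{room}: for $i<j$ one can choose genuine group elements in $C_j\setminus C_i$, and nowhere-density lets us select, around each $C_i$, an interval of $\mathcal{C}(M)$ meeting no member of the family. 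This yields the first coordinate of an array: a formula $\varphi_1(x;\,\cdot\,)$ whose truth value along $(C_i)$ records which $C_i$ a probe element lies nearest to.

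Third, and this is where the hypothesis is used essentially, I would extract a second, independent coordinate from non-valuationality. Because each $C_i$ has trivial stabilizer, $C_i\neq C_i+a$ for every $a\neq 0$; thus arbitrarily small translates genuinely move the cut, and its boundary can be resolved by group elements on a finer scale than the spacing of the sequence. I would use this to manufacture a second sequence — of translates $C_i+a$, or of finely-placed test elements near the $C_i$ — that is mutually indiscernible with $(C_i)$ and along which a single element $b$ realizes a pattern independent of its pattern along $(C_i)$. The two mutually indiscernible sequences together with $b$ then form an array witnessing $\dprk(\sM)\geq 2$, contradicting dp-minimality.

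The main obstacle is precisely this third step: converting ``non-valuational'' into genuine dp-rank-$2$ independence, that is, exhibiting the two sequences as \emph{mutually} indiscernible and showing that the second coordinate is not already determined by the first. This is exactly the point at which the hypothesis cannot be dropped: a valuational cut is invariant under a positive translation, hence under a nontrivial convex subgroup, so the fine-scale resolution collapses, the putative second coordinate becomes a function of the first, and no array is produced. An alternative I would keep in reserve is to attach to each gap cut a single $\sM$-definable point and thereby reduce the gap case to Fact~\ref{fact:goodrick-nwd} as in the first step; the difficulty there is that a gap cut has no canonical nearest point, so such an assignment is delicate, and I expect the direct array argument to be cleaner.
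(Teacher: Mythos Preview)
The paper does not prove this Fact; it is quoted as \cite[Lemma~3.3]{goodrick}, so there is no in-paper argument to compare against. Assessing your sketch on its own:

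Your first step is correct. Cuts with a maximum are parametrized by a definable $X\subseteq M$, nowhere-density of $\mathcal{C}$ in $\mathcal{C}(M)$ pulls back along the dense order-embedding $x\mapsto(-\infty,x]$ to nowhere-density of $X$ in $M$, and Fact~\ref{fact:goodrick-nwd} (which has a direct proof independent of the present statement) makes $X$ finite.

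The remainder is a plan, not a proof. You extract an indiscernible increasing sequence of non-valuational gap cuts and obtain one row of an ict-pattern from the membership formula $x\in C_t$; that much is routine. The entire content of the lemma is the second row, and there you only say you ``would use'' trivial stabilizer to build ``translates $C_i+a$, or finely-placed test elements,'' without specifying the formula, the second sequence, or the realizing element---and you yourself flag this as ``the main obstacle.'' That is the gap. Two concrete points you have not addressed: the parameters for $C_i+a_j$ involve the very same $C_i$, so mutual indiscernibility with the first sequence is far from automatic; and you have not exhibited any single $b\in M$ whose pattern against the putative second sequence is independent of its pattern against the first. Until a specific pair of formulas and a witness $b$ are written down and the array verified, the argument is incomplete. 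Your reserve alternative---attach a definable point to each gap cut and reduce to Fact~\ref{fact:goodrick-nwd}---runs into exactly the difficulty you name: a gap cut has no canonical nearest point, and producing one uniformly in the parameters is essentially what the lemma asserts.
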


\noindent
We now prove Proposition~\ref{prop:nonval1}.

\begin{proof}
If $K$ is a non-trivial $\sM$-definable convex subgroup of $H$ then equivalence modulo $K$ is a convex equivalence relation and every coset of $H$ contains infinitely many classes.
So $(1)$ implies $(2)$.
\medskip

\noindent
It is clear that $(2)$ implies $(3)$.
We show that $(3)$ implies $(1)$.
Suppose $E$ satisfies the conditions of $(3)$.
If $E_a$ does not contain $a + H$ then $E_a$ is disjoint from either an initial or final segment of $a + H$.
After reflecting if necessary we suppose that there are infinitely many $E$-classes $E_a$ such that $E_a$ is infinite and does not contain a final segment of $a + H$.
Let $\Cal B$ be the family of downwards closures of such $E$-classes.
Note that every element of $\Cal B$ lies in some coset of $H$.
\medskip

\noindent
Suppose $(M,+,<)$ is dense.
Then an $E$-class is infinite if and only if it has at least two elements, so $\Cal B$ is an $\sM$-definable family of cuts.
Lemma~\ref{lem:eq-nwd} shows that $\Cal B$ is nowhere dense.
So $\Cal B$ contains a valuational cut by Fact~\ref{fact:goodrick}.
Lemma~\ref{lem:nonval} now shows that $H$ is valuational.
\medskip

\noindent
Now suppose $(M,+,<)$ is discrete.
We identify the minimal positive element of $M$ with $1$ and consider $\Z$ as a convex subgroup of $(M,+,<)$.
Then $(\sM,\Z)$ is dp-minimal by Fact~\ref{fact:shelah}.
The downwards closure of $E_a$ is in $\Cal B$ if there is $b \in E_a$ such that $E_a$ contains an initial segment of $b + \Z$ but does not contain $b + H$.
So $\Cal B$ is an $(\sM,\Z)$-definable family of cuts.
Suppose $H$ is non-valuational.
Lemma~\ref{lem:nonval} implies that each cut in $\Cal B$ is non-valuational.
Lemma~\ref{lem:discrete-max} shows that each cut in $\Cal B$ has a maximal element.
Let $Y$ be the $(\sM,\Z)$-definable set of maximal elements of cuts in $\Cal B$.
Note that distinct elements of $Y$ lie in distinct cosets of $\Z$.
As $Y$ is infinite Lemma~\ref{lem:coset-intersect} implies $(\sM,\Z)$ is not dp-minimal, contradiction.
\end{proof}

\noindent
Proposition~\ref{prop:going-down} in particular shows that if $\sM$ is non-valuational then every definable subset of $M$ is a finite union of cnc sets.

\begin{Prop}
\label{prop:going-down}
Suppose $\sM$ is dp-minimal and $H$ is an $\sM$-definable non-valuational convex subgroup of $(M,+,<)$.
Let $N := M/H$ and $\pi$ be the quotient map $M \to N$.
Then every $\sM$-definable subset of $M$ is a finite union of cnc sets and sets of the form $\pi^{-1}(Y) \cap (b + nM)$ for $\sM$-definable $Y \subseteq N$, $b \in M$, and $n$.
\end{Prop}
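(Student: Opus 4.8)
The plan is to reduce a definable set $X \subseteq M$ to behavior on cosets of $H$ together with behavior ``modulo $H$'', using the machinery built up in Theorem~\ref{thm:gen-convex} and Proposition~\ref{prop:nonval1}. First I would apply Theorem~\ref{thm:gen-convex} to $X$, obtaining an $\sM$-definable convex equivalence relation $E$ and an integer $n$ such that $E$ has only finitely many finite classes, every infinite $E$-class is open, and for each $a$ we have $E_a \cap X = E_a \cap Y_a$ for a (finite) union $Y_a$ of cosets of $nM$. After replacing $n$ by a multiple I may assume the cosets involved are all cosets of the same $nM$ and that $n$ is divisible by whatever index $H$ has in its convex hull, so $nM \subseteq H$ or at least $nM \cap H$ is well-behaved; the finitely many finite $E$-classes contribute a finite set which is trivially a union of cnc sets, so I discard them.

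The key point is that since $H$ is non-valuational, Proposition~\ref{prop:nonval1} (in the contrapositive, its equivalence $(1)\Leftrightarrow(3)$) forces the infinite $E$-classes to be ``large relative to $H$'': each infinite $E$-class $E_a$ must contain $a + H$ except for finitely many exceptional classes, since otherwise we would produce infinitely many infinite $E$-classes not containing their $H$-coset, making $H$ valuational. Thus, up to a finite set, every infinite $E$-class is a union of cosets of $H$, i.e. $E$ refines (on its infinite part) the equivalence relation of being in the same coset of $H$ — wait, it is the other way: $E_a \supseteq a+H$ means the $E$-classes are unions of $H$-cosets, so $E$ is coarser than congruence mod $H$. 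Hence $E$ descends to a convex equivalence relation $\bar E$ on $N = M/H$, and $\pi^{-1}(\bar E_{\pi(a)}) = E_a$ for all but finitely many $a$.

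Now I would analyze $X$ on a single infinite $E$-class $E_a = \pi^{-1}(D)$ where $D = \bar E_{\pi(a)} \subseteq N$. On this class, $X$ agrees with a finite union of cosets of $nM$; the subtlety is whether membership of $a+H$-cosets in $Y_a$ depends only on $\pi(a)$. The set of pairs describing ``which coset of $nM$ within $E_a$ lies in $X$'' is an $\sM$-definable family indexed by (cosets of $nM$) $\times$ (the $E$-classes), and using Fact~\ref{fact:germ}-style finiteness together with Fact~\ref{fact:Z}/Fact~\ref{fact:shelah} one shows this data is controlled by finitely many parameters; the cosets of $nM$ contained in $H$ that appear give a genuine cnc contribution on each class, while the coarser information — which $H$-cosets (equivalently, which points of $N$) within $D$ belong to $X$ — is captured by an $\sM$-definable subset $Y \subseteq N$, yielding a piece of the form $\pi^{-1}(Y) \cap (b + nM)$. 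Assembling over the finitely many ``types'' of $E$-class (there are finitely many because $Y_a$ ranges over a definable family with finitely many germs, hence finitely many values up to the relevant equivalence) gives the desired finite-union decomposition.

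The main obstacle I anticipate is the bookkeeping in the last step: precisely separating the part of $X \cap E_a$ that varies with the fine coset structure (inside a fixed $H$-coset, giving cnc sets) from the part that varies only with $\pi(a)$ across $D$ (giving $\pi^{-1}(Y)\cap(b+nM)$), and checking that the resulting $Y \subseteq N$ is genuinely $\sM$-definable and that only finitely many distinct patterns occur. This requires a careful application of compactness and of Fact~\ref{fact:germ} to the definable family of ``coset patterns'' indexed by $M$, together with Fact~\ref{fact:cc-boolean} to collapse the finitely many pieces into a single finite union.
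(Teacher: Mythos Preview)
Your skeleton matches the paper's proof: apply Theorem~\ref{thm:gen-convex} to get $n$ and $E$, use Proposition~\ref{prop:nonval1} to see that all but finitely many cosets of $H$ lie inside a single $E$-class, then package. But you over-complicate the packaging and reach for the wrong tools. Fact~\ref{fact:germ} is a statement about valued-field topologies and has no role here; neither do Fact~\ref{fact:Z}, Fact~\ref{fact:shelah}, nor any compactness argument. Once you have a finite $F \subseteq N$ such that $\pi^{-1}(c)$ is contained in a single $E$-class for every $c \in N \setminus F$, Theorem~\ref{thm:gen-convex} already gives $\pi^{-1}(c) \cap X = \pi^{-1}(c) \cap \bigcup_{a \in B_c}(a + nM)$ for some $B_c \subseteq A$, where $A$ is a fixed set of representatives of $M/nM$. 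The ``subtlety'' you anticipate is not there: for each $b \in A$ the set
\[
Y_b := \{\, c \in N \setminus F : \pi^{-1}(c) \cap (b + nM) \subseteq X \,\}
\]
is $\sM$-definable by inspection, and $X \setminus \pi^{-1}(F) = \bigcup_{b \in A} \pi^{-1}(Y_b) \cap (b + nM)$. No germ-counting or finiteness-of-patterns argument is needed.

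What does require one more thought is the handling of the finitely many exceptional cosets $c \in F$, which you gloss over. Here the paper applies Proposition~\ref{prop:nonval1} a second time: since $H$ is non-valuational and $E$ has only finitely many finite classes, only finitely many $E$-classes can intersect a given $\pi^{-1}(c)$ (otherwise condition (3) of Proposition~\ref{prop:nonval1} would hold). Hence $\pi^{-1}(c) \cap X$ is a finite union of sets of the form $E_a \cap (b + nM)$, each a cnc set. Finally, your remark about replacing $n$ so that $nM \subseteq H$ is both unnecessary and in general impossible (e.g.\ $H$ could be a proper convex subgroup of a divisible group).
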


\noindent
Taking $H = M$ we see that if $\Sa M$ is non-valuational then every definable unary set is a finite union of cnc sets.

\begin{proof}
Let $X$ be an $\sM$-definable subset of $M$.
Let $n$ and $E$ be as obtained in Theorem~\ref{thm:gen-convex}.
Let $A$ be a set of representatives of the cosets of $nM$.
There are only finitely many finite $E$-classes, so all but finitely many cosets of $H$ intersect an infinite $E$-class.
Proposition~\ref{prop:nonval1} implies that all but finitely many cosets of $H$ are contained in some $E$-class.
Let $F \subseteq N$ be finite such that if $c \in N \setminus F$ then $\pi^{-1}(c)$ is contained in an $E$-class.
So if $c \in N \setminus F$ then
$$ \pi^{-1}(c) \cap X = \pi^{-1}(c) \cap  \bigcup_{a \in B} a + nM  $$
for some $B \subseteq A$.
For each $b \in A$ let $Y_b$ be the $\sM$-definable set of $c \in N \setminus F$ such that 
$$ \pi^{-1}(c) \cap [ b + nM ] \subseteq \pi^{-1}(c) \cap X . $$
Then
$$ X \setminus \pi^{-1}(F) =  \bigcup_{b \in A}  \pi^{-1}(Y_b) \cap [b + nM ]. $$
So it is enough to show that $X \cap \pi^{-1}(c)$ is a finite union of cnc sets for each $c \in F$.
Fix $c \in F$.
As $H$ is non-valuational and there are only finitely many finite $E$-classes Proposition~\ref{prop:nonval1} shows that only finitely many $E$-classes intersect $\pi^{-1}(c)$.
For each $b \in A$ let $\Cal C_b$ be the set of all $E_a$ such that $E_a$ intersects $\pi^{-1}(c)$ and $b + nM \subseteq E_a \cap X$.
Then
$$ \pi^{-1}(c) \cap X = \bigcup_{b \in A} \bigcup_{C \in \Cal C_b} C \cap (b + nM). $$
So $\pi^{-1}(c) \cap X$ is a finite union of cnc sets.
\end{proof}

\noindent We now finally prove Theorem~\ref{thm:cc}.

\begin{proof}
Suppose $\sM$ defines exactly $m$ non-trivial convex subgroups of $(M,+,<)$.
We apply induction on $m$.
If $m = 0$ then $\sM$ is non-valuational so this case follows from Proposition~\ref{prop:going-down}.
Suppose $m \geq 1$.
Let $H$ be the intersection of all non-trivial definable convex subgroups of $(M,+,<)$.
Then $H$ is the minimal non-trivial definable convex subgroup of $(M,+,<)$, so $H$ is non-valuational.
Let $N := M/H$.
If $K$ is a convex subgroup of $N$ then $\pi^{-1}(K)$ is a convex subgroup of $M$ containing $H$.
Note that there are exactly $m - 1$ non-trivial definable convex subgroups of $N$.
\medskip

\noindent
By Proposition~\ref{prop:going-down} and Fact~\ref{fact:cc-boolean} it suffices to suppose that $Y \subseteq N$ is definable and show that $\pi^{-1}(Y)$ is a finite union of cnc sets.
Induction yields a finite collection $\Cal Y$ of cnc subsets of $N$ such that $Y = \bigcup \Cal Y$.
As $\pi^{-1}(Y) = \bigcup_{Z \in \Cal Y} \pi^{-1}(Z)$ we may suppose that $Y$ is a cnc subset of $N$.
Let $Y = C \cap (a + nN)$ for convex $C \subseteq N$.
So $\pi^{-1}(Y) = \pi^{-1}(C) \cap \pi^{-1}(a + nN)$.
As $\pi$ is monotone $\pi^{-1}(C)$ is convex.
Fix $a' \in M$ such that $\pi(a') = a$.
Then $\pi^{-1}(a + nN) = a' + \pi^{-1}(nN)$.
Then $\pi^{-1}(nN)$ is a subgroup of $(M,+)$ containing $nM$.
By non-singularity $|M/nM| < \aleph_0$ so $\pi^{-1}(nN)$ is a finite union of cosets of $nM$.
So $\pi^{-1}(a + nN)$ is a finite union of cosets of $nM$.
\end{proof}

\noindent
We now gather some corollaries to Theorem~\ref{thm:cc}.

\subsection{Concrete examples} We first discuss some examples of ordered abelian groups with only finitely many convex subgroups.

\medskip\noindent Let $<_{\mathrm{Lex}}$ be the lexicographic order on $\R^n$.
It is easy to see that if $M$ is a subgroup of $(\R^n,+)$ then every convex subgroup of $(M,+,<_{\mathrm{Lex}})$ is of the form
$$ \{ (a_1,\ldots,a_n) \in M : a_{1} = a_{2} = \ldots = a_k = 0\} $$
for $0 \leq k \leq  n$.
So in this case $(M,+,<_{\mathrm{Lex}})$ has at most $n - 1$ non-trivial convex subgroups.
Corollary~\ref{cor:lex-cor} follows from Theorem~\ref{thm:cc}, Fact~\ref{fact:dp-oag}, and Proposition~\ref{prop:weak-to-nip}.

\begin{Cor}
\label{cor:lex-cor}
Suppose $M$ is a subgroup of $(\R^n,+)$ and $\sM$ expands $(M,+,<_{\mathrm{Lex}})$.
Then $\sM$ is dp-minimal if and only if $(M,+)$ is non-singular and $\mathrm{Th}(\sM)$ is weakly $\mathrm{Th}(M,+,<_{\mathrm{Lex}})$-minimal.
If $(M,+)$ is divisible and $\sM$ is dp-minimal then $\sM$ is weakly o-minimal.
\end{Cor}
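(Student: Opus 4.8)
The plan is to derive all three assertions from Theorem~\ref{thm:cc}, Fact~\ref{fact:dp-oag}, and Proposition~\ref{prop:weak-to-nip}. The key preliminary point is the observation just made: every convex subgroup of $(M,+,<_{\mathrm{Lex}})$ is one of the finitely many subgroups displayed above, so $\sM$ in particular defines only finitely many convex subgroups of $(M,+,<_{\mathrm{Lex}})$ and Theorem~\ref{thm:cc} applies to $\sM$.

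I would first dispose of the backward direction, which is immediate. Assume $(M,+)$ is non-singular and $\mathrm{Th}(\sM)$ is weakly $\mathrm{Th}(M,+,<_{\mathrm{Lex}})$-minimal. By Fact~\ref{fact:dp-oag} the ordered abelian group $(M,+,<_{\mathrm{Lex}})$ is dp-minimal, i.e.\ $\mathrm{Th}(M,+,<_{\mathrm{Lex}})$ is dp-minimal; Proposition~\ref{prop:weak-to-nip} then gives that $\mathrm{Th}(\sM)$, and hence $\sM$, is dp-minimal. For the forward direction, assume $\sM$ is dp-minimal. Since dp-rank does not increase under passing to a reduct, $(M,+,<_{\mathrm{Lex}})$ is dp-minimal, so $(M,+)$ is non-singular by Fact~\ref{fact:dp-oag}. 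Theorem~\ref{thm:cc} now tells us that every unary definable set in every elementary extension of $\sM$ is a finite union of sets $C\cap(a+kN)$ with $C$ convex. Since $C\cap(a+kN)$ is the intersection of a convex set — which is externally definable in the ordered group reduct, by the observation that convex sets in expansions of linear orders are externally definable — with the reduct-definable set $a+kN$, every unary definable set in a model of $\mathrm{Th}(\sM)$ is externally definable in its $\{+,<_{\mathrm{Lex}}\}$-reduct; a routine compactness argument (to extract a uniform modulus $k$ and a uniform bound on the number of pieces, as in the proof of Proposition~\ref{prop:cc-converse}) then upgrades this to weak $\mathrm{Th}(M,+,<_{\mathrm{Lex}})$-minimality of $\mathrm{Th}(\sM)$.

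Finally, for the last sentence: if $(M,+)$ is divisible then $(M,+,<_{\mathrm{Lex}})$ is a dense divisible ordered abelian group, and divisibility is inherited by every $\mathscr{N}\succ\sM$, so $kN=N$ in each such $\mathscr{N}$ and every set of the form $C\cap(a+kN)$ is simply convex. Hence the conclusion of Theorem~\ref{thm:cc} degenerates to: every unary definable set in $\sM$ is a finite union of convex sets, which is precisely weak o-minimality — this is the remark immediately following Theorem~\ref{thm:cc}. The only step that is not completely mechanical is the last step of the forward direction, namely passing from the ``finite union of cnc sets in all elementary extensions'' output of Theorem~\ref{thm:cc} to the formal definition of weak $T$-minimality; this needs the uniform modulus and piece-bound supplied by compactness together with the external definability of convex sets, and is essentially the reverse of a step already carried out in the proof of Proposition~\ref{prop:cc-converse}, so I expect no genuine difficulty there.
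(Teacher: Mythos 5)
Your proposal is correct and follows exactly the route the paper intends: the observation that $(M,+,<_{\mathrm{Lex}})$ has at most $n-1$ non-trivial convex subgroups (so Theorem~\ref{thm:cc} applies to any expansion), Fact~\ref{fact:dp-oag} for non-singularity, Proposition~\ref{prop:weak-to-nip} for the backward direction, and the compactness/external-definability upgrade from ``finite union of cnc sets in all elementary extensions'' to weak $\mathrm{Th}(M,+,<_{\mathrm{Lex}})$-minimality, with the divisible case degenerating to weak o-minimality as in the remark following Theorem~\ref{thm:cc}. The paper leaves all of these details implicit, so your write-up is simply a fleshed-out version of the same argument.
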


\noindent
Thus a dp-minimal expansion of $(\R^n,+,<_{\mathrm{Lex}})$ or $(\Q^n,+,<_{\mathrm{Lex}})$ is weakly o-minimal.

\medskip
\noindent 
We let $\rk(A)$ be the rank of an abelian group $A$, i.e. the dimension of the $\Q$-vector space $A \otimes \Q$.
We apply the well-known fact that rank is additive across short exact sequences.

\begin{Fact}
\label{fact:finite-rank}
If $\rk(M) < \aleph_0$ then $(M,+,<)$ has only finitely many convex subgroups.
\end{Fact}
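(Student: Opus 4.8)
The statement asserts that a finite-rank ordered abelian group $(M,+,<)$ has only finitely many convex subgroups. The plan is to exploit the chain structure of convex subgroups (already recalled in the preliminaries) together with additivity of rank across short exact sequences. First I would note that the convex subgroups of $(M,+,<)$ form a chain under inclusion, so it suffices to bound the length of this chain. Suppose for contradiction that there is a strictly increasing chain $\{0\} = H_0 \subsetneq H_1 \subsetneq \cdots \subsetneq H_{k} = M$ of convex subgroups with $k > \rk(M)$. The key point is that each successive quotient $H_{i+1}/H_i$ is a nontrivial ordered abelian group (being the quotient of an ordered abelian group by a convex subgroup, as discussed in the preliminaries), hence torsion-free and nonzero, so $\rk(H_{i+1}/H_i) \geq 1$.

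Next I would apply additivity of rank: from the short exact sequences $0 \to H_i \to H_{i+1} \to H_{i+1}/H_i \to 0$ one gets $\rk(H_{i+1}) = \rk(H_i) + \rk(H_{i+1}/H_i) \geq \rk(H_i) + 1$, and telescoping yields $\rk(M) = \rk(H_k) \geq \sum_{i=0}^{k-1} \rk(H_{i+1}/H_i) \geq k$, contradicting $k > \rk(M)$. Hence every chain of distinct convex subgroups has at most $\rk(M) + 1$ members, so there are at most $\rk(M) + 1$ convex subgroups in total (since they are linearly ordered by inclusion, a collection of them is itself a chain).

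I do not anticipate a serious obstacle here; this is an elementary argument. The only points requiring care are: (i) recording precisely that a nonzero ordered abelian group has rank at least $1$ — true because it is torsion-free and nontrivial, so contains an element of infinite order and thus a copy of $\Z$; and (ii) confirming that $\rk(H_{i+1}/H_i)$ is genuinely positive, i.e. that a strict containment of convex subgroups gives a nontrivial quotient, which is immediate. If one wants to be slightly more careful, one observes that the full collection of all convex subgroups, being totally ordered by inclusion, has cardinality bounded by the supremum of lengths of its finite subchains, which we have just bounded by $\rk(M) + 1$; so finiteness of rank directly gives finiteness of the set of convex subgroups.
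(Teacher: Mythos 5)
Your proof is correct and takes essentially the same route as the paper: both arguments rest on the facts that a quotient by a proper convex subgroup is a nonzero torsion-free (ordered) abelian group, hence has rank at least $1$, and that rank is additive across short exact sequences. The paper packages this as an induction on the number of non-trivial convex subgroups, while you telescope along a chain $\{0\}=H_0\subsetneq\cdots\subsetneq H_k=M$; these are the same argument in different clothing, and your explicit appeal to the chain structure of convex subgroups to pass from a bound on chain lengths to finiteness of the whole collection is a point the paper leaves implicit.
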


\begin{proof}
The case when $M = \{0\}$ is trivial, so we suppose that $M$ is infinite.
We show that if $(M,+,<)$ has at least $n$ non-trivial convex subgroups then $\rk(M) \geq n + 1$ by applying induction on $n$.
As $M$ is torsion free $\rk(M) \geq 1$, so the case $n = 0$ is trivial.
Suppose $n \geq 1$.
Let $H$ be a non-trivial convex subgroup of $(M,+,<)$ with at least $n - 1$ convex subgroups.
By induction $\rk(H) \geq n$.
As $H$ is proper $M/H \neq \{0\}$.
So $M/H$ is torsion free, hence $\rk(M/H) \geq 1$.
We have an exact sequence
$$ 0 \longrightarrow H \longrightarrow M \longrightarrow M/H \longrightarrow 0. $$
So $ \rk(M) = \rk(H) + \rk(M/H) \geq n + 1. $
\end{proof}

\noindent Corollary~\ref{cor:finite-rank} follows from Theorem~\ref{thm:cc} and Fact~\ref{fact:finite-rank}.

\begin{Cor}
\label{cor:finite-rank}
Suppose $(M,+)$ has finite rank.
Then $\sM$ is dp-minimal if and only if $(M,+)$ is non-singular and $\mathrm{Th}(\sM)$ is weakly $\mathrm{Th}(M,+,<)$-minimal.
In particular if $(M,+)$ is divisible and $\sM$ is dp-minimal then $\sM$ is weakly o-minimal.
\end{Cor}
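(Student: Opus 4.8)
The plan is to deduce the statement directly from Theorem~\ref{thm:cc}, combining it with Fact~\ref{fact:finite-rank}, Fact~\ref{fact:dp-oag} and Proposition~\ref{prop:weak-to-nip}, exactly as in the proof of Corollary~\ref{cor:lex-cor} but with the lexicographic computation replaced by Fact~\ref{fact:finite-rank}. First I would note that if $(M,+)$ has finite rank then Fact~\ref{fact:finite-rank} tells us $(M,+,<)$ has only finitely many convex subgroups, so in particular $\sM$ defines only finitely many convex subgroups and Theorem~\ref{thm:cc} applies: $\sM$ is dp-minimal if and only if every unary set definable in every elementary extension of $\sM$ is a finite union of cnc sets.

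For the ``if'' direction, suppose $(M,+)$ is non-singular and $\mathrm{Th}(\sM)$ is weakly $\mathrm{Th}(M,+,<)$-minimal. By Fact~\ref{fact:dp-oag}, non-singularity of $(M,+)$ gives that $(M,+,<)$, hence $\mathrm{Th}(M,+,<)$, is dp-minimal; Proposition~\ref{prop:weak-to-nip} then yields that $\mathrm{Th}(\sM)$ is dp-minimal, so $\sM$ is dp-minimal. For the ``only if'' direction, suppose $\sM$ is dp-minimal. Since a reduct of a dp-minimal structure is dp-minimal, $(M,+,<)$ is dp-minimal, so $(M,+)$ is non-singular by Fact~\ref{fact:dp-oag}. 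By Theorem~\ref{thm:cc} every unary set definable in any elementary extension $\mathscr{N}$ of $\sM$ is a finite union of cnc subsets of $N$. To upgrade this to weak $\mathrm{Th}(M,+,<)$-minimality I would run the compactness argument appearing in the proof of Proposition~\ref{prop:cc-converse}: given an $L^\dia$-formula $\varphi(x,y)$ with $|x|=1$, compactness produces a single $n$ and a uniform bound $k$ so that in every model of $\mathrm{Th}(\sM)$ each instance $\varphi(\cdot,a)$ is a union of at most $k$ sets of the form $C \cap (c + nM)$ with $C$ convex. Since each such set is definable in $(M,+,<)^{\mathrm{Sh}}$ — the coset $c+nM$ outright, and the convex set $C$ via its (possibly external) endpoints — all of these pieces can be captured by one $L$-formula $\phi(x,z)$ evaluated in a sufficiently saturated elementary extension $\mathscr{P}$ of the $L$-reduct; this is precisely the ``it is now easy to see'' step of Proposition~\ref{prop:cc-converse} read in reverse.

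Finally, for the ``in particular'': if $(M,+)$ is divisible then $nM=M$ for all $n$, so $(M,+)$ is non-singular, and if moreover $\sM$ is dp-minimal then by the above $\mathrm{Th}(\sM)$ is weakly $\mathrm{Th}(M,+,<)$-minimal and, since every cnc set $C\cap(c+nM)=C$ is simply a convex set, every unary definable set in $\sM$ is a finite union of convex sets. A nontrivial divisible ordered abelian group is dense, so $\sM$ is weakly o-minimal.

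The only step that is not purely formal is the compactness plus Shelah-expansion bookkeeping in the ``only if'' direction, where one must check that the varying convex endpoints and cosets are uniformly captured by a single formula in a saturated model of $\mathrm{Th}(M,+,<)$; everything else is an immediate combination of the cited results, and that step is already carried out (in the opposite direction) inside the proof of Proposition~\ref{prop:cc-converse}.
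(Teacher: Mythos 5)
Your proposal is correct and follows exactly the route the paper intends: Fact~\ref{fact:finite-rank} reduces to the finitely-many-convex-subgroups setting, and then Theorem~\ref{thm:cc} together with Fact~\ref{fact:dp-oag} and Proposition~\ref{prop:weak-to-nip} (plus the compactness step from Proposition~\ref{prop:cc-converse}) gives both directions, just as in Corollary~\ref{cor:lex-cor}. The paper states this corollary without further argument, so your write-up simply makes its one-line derivation explicit.
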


\subsection{The discrete non-valuational case}
We prove Theorem~\ref{thm:ZZ}, a generalization of Fact~\ref{fact:Z}.
An ordered abelian group is a $\Z$-group if it is elementarily equivalent to $(\Z,+,<)$.
Model completeness of $(\Z,+,<)$ shows that if $(M,+,<)$ is a $\Z$-group then the embedding $\Z \to M$ given by $k \mapsto k1_M$ is elementary.

\begin{Thm}
\label{thm:ZZ}
Suppose $(M,+,<)$ is discrete.
Then the following are equivalent
\begin{enumerate}
\item $\sM$ is dp-minimal and non-valuational.
\item $(M,+,<)$ is a $\Z$-group and $\sM$ is interdefinable with $(M,+,<)$.
\end{enumerate}
\end{Thm}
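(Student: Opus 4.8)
The plan is to prove the two implications separately. For $(2) \Rightarrow (1)$, if $(M,+,<)$ is a $\Z$-group then it is dp-minimal by Fact~\ref{fact:dp-oag} (a $\Z$-group is non-singular since $|M/nM| = n$), and it is non-valuational because every nonempty definable bounded-above subset of a $\Z$-group has a maximal element, so by Lemma~\ref{lem:discrete-max} the structure $(M,+,<)$ is non-valuational; since $\sM$ is interdefinable with $(M,+,<)$, the same holds for $\sM$. The substance is $(1) \Rightarrow (2)$. So suppose $\sM$ is dp-minimal and non-valuational and $(M,+,<)$ is discrete; identify $1_M$ with $1$ and view $\Z = \{k1_M : k \in \Z\}$ as the minimal non-trivial convex subgroup.

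First I would show $(M,+,<)$ is a $\Z$-group. Since $\sM$ is non-valuational, $\sM$ defines no non-trivial convex subgroup, so by Theorem~\ref{thm:cc} (taking $\sM = (M,+,<)$, which trivially satisfies the hypothesis) every unary definable subset of every elementary extension $\Sa N$ of $(M,+,<)$ is a finite union of cnc sets $C \cap (a + n N)$. Actually the cleaner route: non-valuationality of $\sM$ forces $(M,+,<)$ to have no non-trivial convex subgroup at all. Indeed, by Fact~\ref{fact:dp-oag} $(M,+)$ is non-singular; if $(M,+,<)$ had a non-trivial convex subgroup $K$, then by Fact~\ref{fact:bel} and Lemma~\ref{lem:bel-1} each $R_n$ is a non-trivial convex subgroup, but $R_n$ need not be $\sM$-definable in general — however, passing to a sufficiently saturated elementary extension and using Fact~\ref{fact:shelah}, $(\sM,\Z)$ is dp-minimal, and if some $R_n \supsetneq \Z$ then one obtains an $\sM$-definable non-trivial convex subgroup (the $R_n$ are $(M,+,<)$-definable, hence $\sM$-definable), contradicting non-valuationality. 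So each $R_n = \Z$ for every $n$, which is exactly the statement that $M/\Z$ is regular, i.e.\ $n(M/\Z)$ is cofinal and coinitial in a suitable sense; combined with discreteness this forces $(M,+,<) \equiv (\Z,+,<)$, i.e.\ $(M,+,<)$ is a $\Z$-group. (Here I would spell out: $R_n = \Z$ means every interval of size $\geq n$ disjoint from $\Z$ still must... — more carefully, $R_n/\Z$ being trivial for all $n$ together with $(M,+,<)$ discrete is the Presburger axiom scheme, giving the $\Z$-group conclusion.)

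Second, with $(M,+,<)$ a $\Z$-group, I would show $\sM$ is interdefinable with $(M,+,<)$. Let $X \subseteq M$ be $\sM$-definable; we must show $X$ is $(M,+,<)$-definable (the $n$-ary case reduces to the unary case by a standard argument, or one invokes Fact~\ref{fact:Z}). Pass to a sufficiently saturated $\sM' \succ \sM$. By Fact~\ref{fact:shelah}, $(\sM',\Z)$ is dp-minimal, where now $\Z$ is a genuine (non-trivial, since $\sM'$ is saturated) convex subgroup. Apply Fact~\ref{fact:Z} to the induced structure on... — more directly, use Proposition~\ref{prop:going-down} with $H = M$: since $\sM$ is non-valuational, every $\sM$-definable subset of $M$ is a finite union of cnc sets $C \cap (a + nM)$. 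Now a cnc set in a discrete non-valuational structure has convex part $C$ with a maximal (and, reflecting, minimal) element or being all of $M$ or an unbounded ray, so $C$ is an interval with endpoints in $M \cup \{\pm\infty\}$, hence $(M,+,<)$-definable; and $a + nM$ is visibly $(M,+,<)$-definable. Therefore $X$ is $(M,+,<)$-definable, giving interdefinability.

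\textbf{Main obstacle.} The delicate step is deducing that $(M,+,<)$ is a genuine $\Z$-group (not merely discrete with no small convex subgroups) purely from $\sM$ being non-valuational and dp-minimal — one must rule out, e.g., $(M,+,<)$ being a non-regular discrete ordered abelian group, which requires knowing that the regularity-defect subgroups $R_n$ would be $\sM$-definable and non-trivial, contradicting non-valuationality via Lemma~\ref{lem:nonval} and Lemma~\ref{lem:discrete-max}. Once $(M,+,<)$ is pinned down as a $\Z$-group, the elimination down to $(M,+,<)$-definability via Proposition~\ref{prop:going-down} and the shape of cnc sets in the discrete non-valuational case is routine.
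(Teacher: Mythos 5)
Your overall architecture matches the paper's: establish that a discrete non-valuational ordered abelian group is a $\Z$-group via the regularity subgroups $R_n$ and Robinson--Zakon, then use Proposition~\ref{prop:going-down} plus Lemma~\ref{lem:discrete-max} to see that every $\sM$-definable unary set is $(M,+,<)$-definable. The unary part of your argument is essentially the paper's. But there are two problems.

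The serious gap is the final step, where you pass from ``every definable subset of $M$ is $(M,+,<)$-definable'' to ``$\sM$ is interdefinable with $(M,+,<)$,'' which you dismiss as ``a standard argument, or one invokes Fact~\ref{fact:Z}.'' Neither works. Fact~\ref{fact:Z} concerns expansions of the standard model $(\Z,+,<)$ only, not arbitrary (in particular saturated, nonstandard) $\Z$-groups, and it cannot be applied to the induced structure here. And there is no standard reduction of $n$-ary definability to unary definability: knowing all unary definable sets does not in general pin down a structure up to interdefinability (compare $(\R,+,<)$ and $(\R,+,\times,<)$, which define the same unary sets). The paper closes exactly this gap by invoking Cluckers' theorem (Fact~\ref{fact:cluckers}), generalizing Michaux--Villemaire: a sufficiently saturated $(M,+,<)$-minimal expansion of a $\Z$-group is interdefinable with $(M,+,<)$. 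This is a substantive external input, not a routine argument, and your proof does not work without it (or something equally strong).

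The second issue is a sign error in your first step. You claim that $R_n \supsetneq \Z$ would yield a non-trivial definable convex subgroup and conclude $R_n = \Z$ for all $n$. This is backwards: $R_n = M$ is the trivial (harmless) case, while any $R_n$ with $\Z \subseteq R_n \subsetneq M$ --- including $R_n = \Z$ itself when $M \neq \Z$ --- is a non-trivial $(M,+,<)$-definable convex subgroup contradicting non-valuationality (and in an actual $\Z$-group one has $R_n = M$, not $R_n = \Z$). The correct dichotomy is $R_n = M$ for all $n$, i.e.\ $(M,+,<)$ is regular, whence discrete regular gives a $\Z$-group; this is the paper's Fact~\ref{fact:RZ}. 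The fix is immediate, but as written the step is false.
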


\noindent
Theorem~\ref{thm:ZZ} requires several tools.
Standard model-theoretic results on ordered abelian groups imply that a non-valuational discrete ordered abelian group is a $\Z$-group.
We summarize these results in the proof of Fact~\ref{fact:RZ}.

\begin{Fact}
\label{fact:RZ}
Suppose $(M,+,<)$ is discrete.
Then $(M,+,<)$ is non-valuational if and only if $(M,+,<)$ is a $\Z$-group.
\end{Fact}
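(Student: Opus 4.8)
The plan is to prove both directions using standard structure theory for ordered abelian groups, specifically the Robinson--Zakon analysis and the characterization of $\Z$-groups. For the easy direction, if $(M,+,<)$ is a $\Z$-group then it is elementarily equivalent to $(\Z,+,<)$, and every nonempty bounded-above $(M,+,<)$-definable set has a maximal element (this is a first-order property, true in $\Z$, hence in $M$), so by Lemma~\ref{lem:discrete-max} $(M,+,<)$ is non-valuational. The content is thus entirely in the converse.

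For the converse, suppose $(M,+,<)$ is discrete and non-valuational; I would show it is a $\Z$-group. First, non-valuational means (by definition, or by Lemma~\ref{lem:nonval}) that there is no non-trivial $(M,+,<)$-definable convex subgroup. I would combine this with the facts already gathered in the paper about the convex subgroups $R_n$: each $R_n$ is a convex subgroup of $(M,+,<)$ (Fact~\ref{fact:bel}), and since $(M,+,<)$ is discrete, $R_n \ne \{0\}$ by Lemma~\ref{lem:bel-1}. Being $(M,+,<)$-definable and non-trivial is forbidden by non-valuationality unless $R_n = M$; hence $R_n = M$ for every $n$, i.e. $(M,+,<)$ is regular. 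By Robinson--Zakon (the theorem of \cite{Ro-Za} quoted in the preliminaries), a regular ordered abelian group is elementarily equivalent to an archimedean one. Now I would invoke the classical completeness/quantifier-elimination results for discretely ordered abelian groups: a discrete regular ordered abelian group whose minimal positive element generates a subgroup $1_M\Z$ with $M/n M$ of the expected size is a model of Presburger arithmetic; more directly, a regular discretely ordered abelian group is elementarily equivalent to $(\Z,+,<)$ provided the quotients $M/nM$ have the right finite size. The minimal convex subgroup $\{k1_M : k \in \Z\}$ being all of $M$ up to the regularity condition forces $|M/nM| = n$ for all $n$, which together with discreteness and regularity is exactly the axiomatization of $\mathrm{Th}(\Z,+,<)$.

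The main obstacle I anticipate is pinning down precisely which package of ``standard model-theoretic results on ordered abelian groups'' to cite and assembling them correctly: one needs (i) that regularity follows from the absence of definable non-trivial convex subgroups (via $R_n = M$), (ii) the Robinson--Zakon equivalence to archimedean groups, and (iii) the fact that a discrete regular ordered abelian group is a $\Z$-group, which requires checking the $M/nM$ condition is automatic here. Step (iii) is where I would be most careful, since a priori a discrete regular ordered abelian group could have $|M/nM|$ larger than $n$ only if it fails to be a pure subgroup extension of $\Z$ in the relevant sense — but discreteness forces $1_M \in M$, $1_M \notin nM$ for $n \ge 2$, and regularity plus discreteness give that every element is congruent mod $n$ to one of $0,1_M,\dots,(n-1)1_M$, so $|M/nM| = n$ and Presburger's quantifier elimination applies. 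I would phrase the write-up as: ``$R_n = M$ for all $n$ by Fact~\ref{fact:bel} and Lemma~\ref{lem:bel-1} and non-valuationality, so $(M,+,<)$ is regular; a discrete regular ordered abelian group is a $\Z$-group by \cite{Ro-Za} and quantifier elimination for Presburger arithmetic,'' keeping the argument short and leaning on the cited literature.
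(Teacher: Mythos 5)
Your proposal is correct and follows essentially the same route as the paper: the converse direction is exactly the paper's argument (if $(M,+,<)$ is not $n$-regular then $R_n$ is a non-trivial definable convex subgroup by Fact~\ref{fact:bel} and Lemma~\ref{lem:bel-1}, so non-valuationality forces regularity, and Robinson--Zakon then yields elementary equivalence to a discrete archimedean group, i.e.\ to $(\Z,+,<)$). Your extra care in step (iii) about $|M/nM|=n$ and the Presburger axioms, and your use of Lemma~\ref{lem:discrete-max} for the easy direction where the paper simply transfers non-valuationality from $(\Z,+,<)$ by elementary equivalence, are only minor elaborations of the same argument.
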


\begin{proof}
Any $\Z$-group is non-valuational as $(\Z,+,<)$ is non-valuational.
A regular ordered abelian group is elementairly equivalent to an archimedean ordered abelian group by~\cite{Ro-Za}.
So if $(M,+,<)$ is regular then $(M,+,<)$ is a $\Z$-group.
Suppose $(M,+,<)$ is not regular and fix $n$ such that $(M,+,<)$ is not $n$-regular.
Then $R_n$ is a non-trivial $(M,+,<)$-definable convex subgroup.
So $(M,+,<)$ is valuational.
\end{proof}

\noindent 
Fact~\ref{fact:cluckers} is due to Cluckers~\cite[Theorem 5]{Cluckers}.
It is a generalization of the theorem of Micheaux and Villemaire~\cite{MiVi-Presburger}.

\begin{Fact}
\label{fact:cluckers}
Suppose $(M,+,<)$ is a $\Z$-group and $\sM$ is sufficiently saturated.
If $\sM$ is $(M,+,<)$-minimal then $\sM$ is interdefinable with $(M,+,<)$.
\end{Fact}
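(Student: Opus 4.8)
The plan is to recognize this as the Presburger rigidity theorem of Micheaux and Villemaire \cite{MiVi-Presburger}, in the form due to Cluckers \cite{Cluckers}: up to interdefinability, a $\Z$-group admits no proper $(M,+,<)$-minimal expansion. As this is a known result it would be cited rather than reproved, but if one were to prove it the natural strategy is an induction on arity. The goal is to show that every $\sM$-definable subset of every $M^k$ is $(M,+,<)$-definable; the cases $k \leq 1$ are precisely the hypothesis of $(M,+,<)$-minimality, so one fixes $k \geq 2$ and an $\sM$-definable $X \subseteq M^k$, say defined by a formula $\varphi(x,y)$ with $|y| = 1$.

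For each $a \in M^{k-1}$ the fiber $X_a = \{\, b \in M : \sM \models \varphi(a,b) \,\}$ is an $\sM$-definable subset of $M$, hence $(M,+,<)$-definable by hypothesis, so by Presburger quantifier elimination it is a finite union of sets of the form $(c_1,c_2) \cap (r + mM)$ with $c_1,c_2 \in M \cup \{\pm\infty\}$. Sufficient saturation, together with a compactness argument --- for each $N$ the condition ``$X_a$ is not a union of $N$ such pieces of modulus at most $N$'' is $\sM$-definable in $a$, and if all such conditions were consistent a saturated $\sM$ would contain a fiber of infinite Presburger complexity --- yields a uniform bound on the Presburger complexity of the $X_a$. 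The heart of the matter is then to show that the data describing the fibers, namely their boundary points and their residue and modulus conditions, vary $(M,+,<)$-definably in $a$; equivalently, that every $\sM$-definable function $M^{k-1} \to M$ is $(M,+,<)$-definable. This last statement would be proved by a simultaneous induction, applying $(M,+,<)$-minimality to the one-variable slices of such a function and patching the slices together via Presburger quantifier elimination. Granting it, $X$ is reassembled from finitely many $(M,+,<)$-definable boundary functions and congruence conditions by Presburger cell decomposition, closing the induction.

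The main obstacle is exactly the extraction of the fiber data as $(M,+,<)$-definable functions of the base point $a$. The interval endpoints are relatively benign, being ``edges'' of the fibers that the unary hypothesis, applied uniformly, forces to be definable; but the congruence parts are delicate, since a coset $r + mM$ is unbounded and so its modulus and residue cannot be read off from finitely many boundary points. This is where the genuinely combinatorial content of the Micheaux--Villemaire and Cluckers arguments lies, and it is why sufficient saturation is assumed. Once $\sM$-definable functions into $M$ are known to be $(M,+,<)$-definable, the passage to higher arity and the final assembly by cell decomposition are routine.
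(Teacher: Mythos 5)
The paper does not prove this statement; it cites it directly as \cite[Theorem 5]{Cluckers}, noting it generalizes Micheaux--Villemaire, which is exactly what you do. Your supplementary sketch of the induction-on-arity strategy is a reasonable outline and honestly flags the genuinely hard step (definably extracting the congruence data of the fibers), but since the paper itself offers no proof, citation is the intended and sufficient treatment here.
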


\noindent
We now prove Theorem~\ref{thm:ZZ}.

\begin{proof}
It is easy to see that $(2)$ implies $(1)$.
We show that $(1)$ implies $(2)$.
Suppose $(2)$.
Then $(M,+,<)$ is a $\Z$-group by Fact~\ref{fact:RZ}.
We may pass to an elementary extension of $\sM$ without loss of generality, so suppose $\sM$ is sufficiently saturated.
By Fact~\ref{fact:cluckers} it suffices to show that $\sM$ is $(M,+,<)$-minimal.
By Proposition~\ref{prop:going-down} it suffices to show that every definable cnc subset of $M$ is $(M,+,<)$-definable.
It suffices to show that every definable convex set is $(M,+,<)$-definable.
As $\sM$ is discrete and non-valuational Lemma~\ref{lem:discrete-max} shows that every $\sM$-definable convex subset of $M$ which is bounded above has a maximum and every bounded below $\sM$-definable convex subset of $M$ has a minimum.
So every $\sM$-definable convex subset of $M$ is an interval.
\end{proof}

\subsection{An application to dp-minimal expansions of fields}
\label{section:fields}
Recall that a valuation on an ordered field is convex if its valuation ring is convex.

\begin{Cor}
\label{cor:fieldor}
Let $(K,+,\times,<)$ be an ordered field and $\mathscr{K}$ be an expansion of $(K,+,\times,<)$.
If $\mathscr{K}$ is dp-minimal then $\mathscr{K}$ is either weakly o-minimal or defines a non-trivial convex valuation.
\end{Cor}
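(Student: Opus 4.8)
The plan is to reduce to the dichotomy already available for ordered abelian groups via Theorem~\ref{thm:cc}. First I would apply the group-level results to the additive ordered abelian group $(K,+,<)$ underlying $\mathscr{K}$. There are two cases according to whether $\mathscr{K}$ defines a non-trivial convex subgroup of $(K,+,<)$ or not. If $\mathscr{K}$ is non-valuational as an expansion of $(K,+,<)$, then by Proposition~\ref{prop:going-down} (with $H = K$) every $\mathscr{K}$-definable subset of $K$ is a finite union of cnc sets; since $(K,+)$ is divisible, cnc sets are just convex sets, so every definable unary set is a finite union of convex sets, i.e. $\mathscr{K}$ is weakly o-minimal. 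This disposes of the non-valuational case.

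So suppose $\mathscr{K}$ is valuational, i.e. $\mathscr{K}$ defines a non-trivial convex subgroup $H$ of $(K,+,<)$. The goal is then to extract a non-trivial convex valuation from $H$. The natural candidate is the \emph{finest} convex valuation whose valuation ring is compatible with $H$: concretely, consider the set $R = \{ a \in K : aH \subseteq H \}$ — the largest subring of $K$ for which $H$ is an $R$-submodule (i.e. an ideal-like object). One checks $R$ is a convex subring of $K$ containing $1$, hence (being convex in an ordered field) is a valuation ring. Convexity of $R$ follows from convexity of $H$: if $0 < a < b$ and $bH \subseteq H$ then $aH \subseteq bH \subseteq H$ since $H$ is convex and closed under the relevant scalings. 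The key points to verify are that $R \neq K$ (non-triviality of the valuation) and that $R$ is $\mathscr{K}$-definable — the latter is immediate from definability of $H$, and the former should follow because $H$ is a \emph{proper} convex subgroup, so some element of $K$ scales $H$ outside itself (if every $a \in K$ satisfied $aH = H$ then $H$ would be a $K$-subspace of $K$, forcing $H = 0$ or $H = K$).

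The main obstacle I anticipate is the second case: carefully pinning down that the set $R$ of "$H$-bounded" scalars is genuinely a proper convex subring, rather than something degenerate, and confirming that the associated valuation is non-trivial rather than accidentally trivial. In particular one must rule out the possibility that $R = K$ — this is exactly where properness of $H$ as a convex subgroup, together with the field structure, must be used. There is also a minor subtlety in choosing the right definition of $R$ so that convexity is transparent; one may prefer to work with $R = \{a \in K : |a| \cdot H \subseteq H\}$ or to symmetrize by intersecting with its analogue for $H$ bounded below, but the verifications are routine linear-algebra-over-an-ordered-field manipulations. Once $R$ is in hand as a proper convex $\mathscr{K}$-definable valuation ring, the induced valuation is the desired non-trivial convex valuation, completing the dichotomy.
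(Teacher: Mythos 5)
Your proposal is correct and matches the paper's argument essentially verbatim: the non-valuational case is handled by the cnc description of unary definable sets (Theorem~\ref{thm:cc}, equivalently Proposition~\ref{prop:going-down} with $H=M$, noting divisibility makes cnc sets convex), and in the valuational case the paper also takes $\{a \in K : aH \subseteq H\}$ as the non-trivial convex valuation ring. The verifications you flag as potential obstacles (convexity, properness via $H$ not being an ideal of the field, definability) are exactly the ones the paper dismisses as easy, and your sketches of them are sound.
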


\noindent 
If $(K,+,\times,<)$ is real closed then $(K,+,\times,<)^{\mathrm{Sh}}$ is weakly o-minimal by Fact~\ref{fact:shelah} and $(K,+,\times,<)^{\mathrm{Sh}}$ defines every convex valuation on $(K,+,\times,<)$.

\begin{proof}
If $\mathscr{K}$ is non-valuational then $\mathscr{K}$ is weakly o-minimal by Theorem~\ref{thm:cc}.
Suppose $\mathscr{K}$ is valuational and let $H$ be a non-trivial convex subgroup of the additive group of $K$.
It is easy to see that $\{ a \in K : aH \subseteq H \}$ is a non-trivial convex subring of $K$, and any convex subring of $K$ is a valuation ring.
\end{proof}

\noindent
Weakly o-minimal non-valuational structures are quite similar to o-minimal structures~\cite{Wencel1, Wencel, Keren-thesis}.
So Corollary~\ref{cor:fieldor} shows that a dp-minimal expansion of an ordered field is either ``almost o-minimal" or admits a definable valuation.
\medskip

\noindent
Suppose that $L$ is a field and $\Sa L$ is a dp-minimal expansion of $L$.
It is an open question whether one of the following necessarily holds:
\begin{enumerate}
    \item $\Sa L$ is strongly minimal,
    \item $\Sa L$ admits a definable field order, or
    \item $\Sa L$ admits a non-trivial definable valuation.
\end{enumerate}
Johnson~\cite{Johnson} has shown that if $L$ is not algebraically closed then $(2)$ or $(3)$ holds, so it suffices to show that a non strongly minimal dp-minimal expansion of an algebraically closed field admits a non-trivial definable valuation.
If the trichotomy above holds, then any dp-minimal expansion of a field is either strongly minimal, ``valuational", or ``almost o-minimal".

\section{Cyclically ordered abelian groups}
\noindent
Throughout this section $(M,+)$ is an abelian group.
A cyclic group order on $(M,+)$ is a ternary relation $C$ such that for all $a,b,c \in M$:
\begin{enumerate}
\item if $C(a,b,c)$, then $C(b,c,a)$,
\item if $C(a,b,c)$, then $C(c,b,a)$ fails,
\item if $C(a,b,c)$ and $C(a,c,d)$ then $C(a,b,d)$,
\item if $a,b,c$ are distinct, then either $C(a,b,c)$ or $C(c,b,a)$,
\item $C$ is invariant under the group operation.
\end{enumerate}
We say that a subset $J$ of $M$ is convex if whenever $a,b \in J$ are distinct we either have $\{ t : C(a,t,b) \} \subseteq J$ or $\{ t : C(b,t,a) \} \subseteq J$.
Given $a,c \in M$ we define the open interval with endpoints $a,c$ to be the set of $b \in M$ such that $C(a,b,c)$.
The collection of open intervals forms the basis for a group topology on $(M,+)$.
We say that $C$ is dense if the group topology is non-discrete.
\medskip

\noindent
\textbf{Throughout this section $C$ is a dense cyclic group order on $(M,+)$ and $\sM$ is an expansion of $(M,+,C)$}.
\medskip

\noindent
We equip $\R/\Z$ with the cyclic group order $S$ such that whenever $a,b,c \in \R$ and $0 \leq a,b,c < 1$ then $S(a + \Z,b + \Z,c + \Z)$ holds if and only if either $a < b < c$, $b < c < a$, or $c < a < b$.
Given an injective character $\chi : M \to \R/\Z$ we let $S_\chi$ be the cyclic group order on $M$ where $S_\chi(a,b,c)$ if and only if $S(\chi(a),\chi(b),\chi(c))$.
We say that $C$ is \textbf{archimedean} if it is of this form.
One can show that $C$ is archimedean if and only if there are no $a,b \in M$ such that $C(0,na,b)$ for all $n$, see \cite{Sw-cyclic}.
\medskip

\noindent
This section is devoted to the Proof of Theorem~\ref{thm:arch-cyclic-orders}.

\begin{Thm}
\label{thm:arch-cyclic-orders}
Suppose that $(M,+,C)$ is archimedean.
Then $\sM$ is dp-minimal if and only if $(M,+)$ is non-singular and $\mathrm{Th}(\sM)$ is weakly $\mathrm{Th}(M,+,C)$-minimal.
Furthermore if $\Sa M$ is dp-minimal and $\Sa M \preceq \Sa N$ then every $\Sa N$-definable subset of $N$ is a finite union of sets of the form $a + nJ$ for convex $J \subseteq N$.
\end{Thm}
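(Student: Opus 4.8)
The strategy is to reduce Theorem~\ref{thm:arch-cyclic-orders} to the already-proven linear case, Corollary~\ref{cor:finite-rank} (or directly Theorem~\ref{thm:cc}), by ``cutting'' the cyclic order at a point to obtain a linear order. More precisely, an archimedean cyclic group order $C=S_\chi$ on $(M,+)$ sits inside the archimedean cyclically ordered group $\R/\Z$ via the injective character $\chi$. The natural move is to pass to the linearization: pull back along $\chi$ to a subgroup of $\R$ (work in the universal cover), or, staying inside $M$, fix a point and look at $M$ equipped with the linear order it inherits on the ``arc'' complementary to that point. The point is that $(M,+,C)$ and the linearly ordered group one gets this way are closely tied: a unary set definable with the cyclic order corresponds, after removing finitely many points, to a unary set definable with a linear order on an index-bounded cover, and conversely.

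\textbf{Key steps.} First I would record the structural dictionary: for $a\in M$, the relation $x<_a y \iff C(a,x,y)$ is a linear order on $M\setminus\{a\}$, and $(M\setminus\{a\},<_a)$ embeds as an initial/convex piece of an ordered abelian group $\widetilde M$ with $M \cong \widetilde M/\mathbb Z\cdot 1$ (the cover), where $\widetilde M$ is archimedean precisely because $C$ is archimedean — this is exactly the ``no $a,b$ with $C(0,na,b)$ for all $n$'' condition, and one cites \cite{Sw-cyclic} for the equivalence. Second, dp-minimality transfers both ways: $\sM$ dp-minimal implies the cover $\widetilde{\sM}$ is dp-minimal (it is interpretable, or an elementary-extension-friendly construction, in $\sM$ after naming a point), and conversely. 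Third, apply Corollary~\ref{cor:finite-rank} / Theorem~\ref{thm:cc} to $\widetilde{\sM}$: since $\widetilde M$ is archimedean it has no nontrivial convex subgroups, so a dp-minimal expansion has every unary definable set in every elementary extension a finite union of cnc sets $D\cap(a+n\widetilde N)$; but archimedean means every convex $D$ is an interval (no convex subgroups), so cnc sets are just $D\cap(a+n\widetilde N)$ with $D$ an interval. Fourth, translate back: a unary $\sM$-definable (equivalently $\sN$-definable) set $X$, viewed in $\widetilde{\sN}$, becomes a finite union of such sets, and pushing forward along $\widetilde N\to N$ and using archimedeanness to absorb the finitely many pieces into a single arc, one gets $X = a+nJ$ for convex $J\subseteq N$; the non-singularity hypothesis on $(M,+)$ is what upgrades the converse (weak $\mathrm{Th}(M,+,C)$-minimality $\Rightarrow$ dp-minimal) via Proposition~\ref{prop:weak-to-nip} and \cite{point-wagner}, exactly as in Proposition~\ref{prop:cc-converse}.

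\textbf{Main obstacle.} The delicate point is the passage between $M$ and its cover $\widetilde M$ at the level of \emph{definable sets}: one must check that $\widetilde{\sM}$ (the cover with the induced expansion) is genuinely dp-minimal, and that this does not require $\sM$ to \emph{define} the cut point in a problematic way — naming a single parameter preserves dp-rank, and the cover is a definable quotient construction that behaves well under elementary extensions, but the bookkeeping with ``finitely many exceptional points'' (the cut point and its translates, boundary points of arcs) needs care so that a finite union of arcs really does collapse to a single arc $a+nJ$ rather than a genuine finite union. Controlling this collapse — showing the archimedean hypothesis forces the ``finitely many convex pieces'' produced by Theorem~\ref{thm:cc} down to \emph{one} interval after the quotient — is where the real work lies; everything else is dictionary-chasing and citation of the linear results.
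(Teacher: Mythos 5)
There is a genuine gap at the central step of your reduction. You assert that the expansion $\widetilde{\sM}$ of the full universal cover $(H,+,<,u)$ by the pullbacks of $\sM$-definable sets is dp-minimal because the cover is ``interpretable\ldots in $\sM$ after naming a point.'' This is not established and is exactly the hard point: the cover is $\Z\times M$ with a twisted addition, and there is no definable or interpretable copy of this unbounded object inside the bounded cyclic structure $\sM$ in general (one cannot define the $\Z$-indexing of the sheets). Dp-minimality only passes automatically to structures that $\sM$ itself defines. The paper's proof is organized precisely to avoid this: it only uses the cover in the direction where it genuinely works (the reduct $(M,+,C)$ \emph{is} definable in $(H,+,<,u)$, which gives Proposition~\ref{prop:cyclic-char} and the right-to-left implication via Lemma~\ref{lem:short-exact}), and for the left-to-right direction it replaces the cover by the bounded \emph{local} group $(I,+_u,<)$ with $I=(-u,u)$, which is honestly definable in $(M,+,C)$, so its induced structure $\sI$ is dp-minimal for free. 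But $(I,+_u,<)$ is only a local group, so Theorem~\ref{thm:cc} does not apply to it directly either; the paper instead runs a local analysis (Lemma~\ref{lem:cyclic-local}: pass to a saturated extension, extract a convex subgroup of $I$ by saturation, and apply Lemma~\ref{lem:local-behavior} to the induced structure there), globalizes as in Theorem~\ref{thm:gen} (Lemma~\ref{lem:cyclic-gen}), and then needs a separate finiteness argument (Lemma~\ref{lem:cyclic-open}, via Goodrick's lemma on non-valuational cuts, which is where archimedeanness enters) to conclude that definable open sets are finite unions of convex sets. None of this is ``dictionary-chasing''; it is the substance of the proof, and your proposal supplies no substitute for it. (A minor related slip: Corollary~\ref{cor:finite-rank} is not the right citation even granting your reduction, since the cover need not have finite rank; archimedeanness gives no nontrivial convex subgroups, so Theorem~\ref{thm:cc} is what you would want.)

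You have also misidentified the ``main obstacle.'' The conclusion should be read, as the paper's proof makes explicit, as ``a \emph{finite union} of sets of the form $a+nJ$''; no collapse to a single such set occurs or is possible (two generic disjoint arcs lying in different cosets of $nM$ cannot be written as a single $a+nJ$). So the step you flag as ``where the real work lies'' is a non-issue, while the real work --- transferring dp-minimality across the cover, or rather avoiding having to do so --- is waved away.
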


\noindent
Note that every set of the form $a + nJ$, $J \subseteq N$ convex, is $\Sh N$-definable.
We first use the universal cover described below to prove the right to left implication of Theorem~\ref{thm:arch-cyclic-orders}.

\subsection{The universal cover}
\label{section:cover}
The universal cover largely reduces the theory of cyclically ordered abelian groups to that of linearly ordered abelian groups.
A universal cover $(H,+,<,u,\pi)$ of $(M,+,C)$ consists of an ordered abelian group $(H,+,<)$, a distinguished positive element $u$ of $H$ such that $u\Z$ is cofinal in $H$, and a surjective group homomorphism $\pi : H \to M$ with kernel $u\Z$ such that for all $0 \leq a,b,c < u$ we have $C(\pi(a), \pi(b), \pi(c))$ if and only if one of the following holds : $a < b < c$, $b < c < a$, or $c < a < b$.
We call $\pi$ a covering map.
It is easy to see that $(H,+,<,u,\pi)$ is unique up to unique isomorphism.
Note $\pi$ restricts to a bijection $[0,u) \to M$.
\medskip

\noindent
Note that $(\R,+,<,1)$ is a universal cover of $(\R/\Z,+,S)$, and if $(M,+,C)$ is a subgroup of $(\R/\Z,+)$ with the induced cyclic group order, and $H$ is the preimage of $M$ under the quotient map $\R \to \R/\Z$ then $(H,+,<,1)$ is a universal cover of $(M,+,C)$.
\medskip

\noindent
We describe the standard construction of a universal cover.
Let $\prec$ be the binary relation on $M$ where $a \prec b$ if either $C(0,a,b)$ or $a = 0$ and $b \neq 0$.
It is easy to see that $\prec$ is a linear order on $M$ ($\prec$ is typically not a group order).
We let $H$ be $\Z \times M$, let $<$ be the lexicographic product of the usual order on $\Z$ and $\prec$, let $u$ be $(1,0)$, let $+$ be given by
$$(k, a)+ (k', a') = 
\begin{cases}
(k+k', a+a' ) & \text{ if } a =0  \text{ or } a' = 0 \text{ or } C(0, a, a +a') ,\\
(k+k'+1, a+a' ) & \text{ otherwise},
\end{cases}
 $$
and let $\pi : H \to M$ be the projection onto the second coordinate.
Then $(H,+,<,u)$ is a universal cover of $(M,+,C)$ with covering map $\pi$.
\medskip

\noindent
We now observe that $(M,+,C)$ is definable in $(H,+,<,u)$.
We define

$$ a \tilde{+} b =
\begin{cases}
a + b & \text{ if } a + b \in [0,u) ,\\
a + b - u & \text{ otherwise}
\end{cases}
\quad \text{for all} \quad a,b \in [0,u).
$$
We define $\tilde{C}$ by setting $\tilde{C}(a,b,c)$ for any $a,b,c \in [0,u)$ such that $a < b < c$ or $b < c < a$ or $c < a < b$.
Then $\pi : H \to M$ induces an isomorphism $([0,u),\tilde{+}, \tilde{C}) \to (M,+,C)$.
\medskip

\noindent
We let $I := (-u,u)$ and let $+_u$ be the restriction of $+$ to $I$, i.e. the ternary relation on $I$ where $a +_u b = c$ when $a,b,c \in I$ and $a + b = c$.
So $(I,+_u,<)$ is a local topological group.
We show that $(I,+_u,<)$ is definable in $(M,+,C)$.
Let $M^\geq$ be a copy of $M$ and $M^- = \{-1\} \times (M \setminus \{0\})$.
We denote an element $(-1,a)$ of $M^-$ as $-a$.
We will identify $M^\geq$ with $[0,u)$ and $M^-$ with $(-u,0)$.
We define an order $\triangleleft$ on $M^- \cup M^\geq$ in the following way
\begin{enumerate}[leftmargin=*]
    \item if $a,b \in M^\geq$ and $a \prec b$ then $a \triangleleft b$,
    \item if $-a,-b \in M^-$ and $b \prec a$ then $-a \triangleleft -b$,
    \item if $-a \in M^-$ and $b \in M^\geq$ then $-a \triangleleft b$.
\end{enumerate}
We define a ternary relation $\hat{+}$ on $M^- \cup M^\geq$ by declaring the following
\begin{enumerate}[leftmargin=*]
\item if $a,b,c \in M^- \cup M^\geq$ and $a \hat{+} b = c$ then $b \hat{+} a = c$,
\item $a \hat{+} 0 = a$ for all $a \in M^- \cup M^\geq$,
\item if $a,b,c \in M^\geq$ are non-zero then $a \hat{+} b = c$ if and only if $C(0,a,a+b)$ and $a + b = c$,
\item if $-a,-b,-c \in M^-$ then $-a \hat{+} (-b) = -c$ if and only if $C(0,a,a+b)$ and $a + b = c$,
\item if $a \in M^\geq$, $-b \in M^-$, and $c \in M^\geq$ then $a \hat{+} (-b) = c$ if and only if $b \hat{+} c = a$,
\item if $a \in M^\geq$, $a \neq 0$, $-b \in M^-$, and $-c \in M^-$ then $a \hat{+} (-b) = -c$ if and only if $a \hat{+} c = b$.
\end{enumerate}
Let $\iota : M^- \cup M^\geq \to I$ be given by declaring $\iota(a) = b$ if either $a \in M^\geq$ and $b \in [0,u)$ satisfies $\pi(b) = a$ or $a \in M^-$ and $b \in (-u,0)$ satisfies $\pi(-b) = a$.
It is easy to see that $\iota$ gives an isomorphism $(M^- \cup M^\geq, \hat{+}, \triangleleft) \to (I,+_u,<)$.
We therefore regard $(I,+_u,<)$ as an $(M,+,C)$-definable local topological group.
\medskip

\noindent
We let $\equiv_n$ be the relation of equivalence modulo $nM$ on $I$.

\begin{Lem}
\label{lem:equiv}
For all $n$, $\equiv_n$ is $(I,+_u,<)$-definable.
For all $n$ and $a \in H$, $I \cap (nH +a)$ is $(I,+_u,<)$-definable.
\end{Lem}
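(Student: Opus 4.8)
\noindent The plan is to show that inside $(I,+_u,<)$ one can recover the element $u$, and with it the entire ``sheet'' decomposition of $H$ over $M$; after that, every set in the statement can be written down explicitly. I would start from the observation that $nH\cap I$ is $(I,+_u,<)$-definable: if $z\in I$, $z\geq 0$, and $z=nc$ with $c\in H$, then $0\leq c\leq z<u$, so $c\in I$ and the partial sums $c,2c,\dots,nc=z$ all lie in $[0,z]\subseteq I$; hence $z\in nH$ iff there is $c\in I$ such that $c+_uc+_u\cdots+_uc$ ($n$ summands) is defined and equals $z$. This is an $(I,+_u,<)$-formula, obtained by quantifying over the $n-1$ intermediate sums, and for $z<0$ one uses $z\in nH\iff -z\in nH$.

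Next I would make $u$ visible. For $a\in I$ with $a>0$ the set $\{b\in I: a+_u b \text{ is defined}\}$ equals $(-u,u-a)$, whose least upper bound in $(I,<)$ is $u-a$, and $u-a\in I$; hence the involution $\mathrm{compl}\colon(0,u)\to(0,u)$, $a\mapsto u-a$, is $(I,+_u,<)$-definable. From $\mathrm{compl}$ one recovers the cyclically ordered group: on $[0,u)=\{x\in I:x\geq 0\}$ the operation $\tilde{+}$ of the construction above is $x\tilde{+}y=x+_uy$ when $x+_uy$ is defined and $x\tilde{+}y=x-_u\mathrm{compl}(y)$ when $x,y>0$ and $x+_uy$ is undefined, so $\tilde{+}$ is $(I,+_u,<)$-definable, and $\pi|_{[0,u)}$ is then the isomorphism $([0,u),\tilde{+},\tilde{C})\to(M,+,C)$ of that construction ($\tilde{C}$ being the cyclic order induced by $<$). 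Moreover $\pi|_I$ is $(I,+_u,<)$-definable, as $\pi(x)=x$ for $x\geq 0$ and $\pi(x)=\mathrm{compl}(-x)$ for $x<0$.

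Finally I would assemble both assertions. Since $nu\in nH$, the set $\{k\in\Z:ku\in nH\}$ is $m\Z$ for some $m\mid n$, and $nH+u\Z=\pi^{-1}(nM)$. For $\equiv_n$: $a\equiv_n b$ means $\pi(a)-\pi(b)\in nM$, i.e. $a-b\in nH+u\Z$, which holds iff $\pi(a)\tilde{-}\pi(b)$ lies in $nM$, and $nM$ is just the set of $n$-fold $\tilde{+}$-sums in $[0,u)$; so $\equiv_n$ is $(I,+_u,<)$-definable by the previous step. For $I\cap(nH+a)$: write $a=a_0+ku$ with $a_0\in[0,u)$; as $ku\equiv(k\bmod m)u\pmod{nH}$, a routine case split on the position of $b-a_0$ relative to $I$ (using $\pi|_I$ when $b-a_0\notin I$) reduces ``$b\in nH+a$'' to finitely many statements of the form ``$z-su\in nH$'' with $z\in I$ an $(I,+_u,<)$-definable function of $b$ and $0\leq s<m$. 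For $s=0$ this is the first step. For $s\geq 1$ one has $|z-su|<mu\leq nu$, so the unique $n$-divisor $w=(z-su)/n$ of $z-su$ lies in $I$; and whether $nw=z-su$ holds is decidable in $(I,+_u,<)$, because the $n$-fold $\tilde{+}$-sum of $\pi(w)$ and the bounded integer giving the sheet of $nw$ over $M$ (computed by iterating $+_u$ on $w$ and counting wrap-arounds) are both $(I,+_u,<)$-definable functions of $w$ and jointly determine $nw$ in $H$; it then remains to match these data against $\pi(z)$ and against the sheet $\delta-s$ of $z-su$, with $\delta\in\{0,-1\}$ read off from the sign of $z$.

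The only genuine difficulty is the cyclic wrap-around bookkeeping in this last step: the elements $b-a$, $z-su$ and $nw$ that one must reason about usually fall outside the chart $I$ and so cannot be named directly in the structure. This is precisely what the second step resolves: once $u$, and hence the sheet decomposition of $H$ over $M$, is $(I,+_u,<)$-definable, each such element is coded by its image in $M$ together with a bounded integer sheet, and every membership condition in sight becomes first order over $(I,+_u,<)$.
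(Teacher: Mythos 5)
Your proof is correct, but it takes a noticeably different and more systematic route than the paper's. Both arguments begin with the same key observation: $nH\cap I$ is definable because an $n$-th divisor of an element of $I$ again lies in $I$, so divisibility can be tested by iterating $+_u$. The paper then finishes in three lines, defining $a\equiv_n b$ by the existence of $c\in nH\cap I$ with $a+_u c=b$ and disposing of the remaining sign patterns by reflection. You instead first make the endpoint $u$ visible inside $(I,+_u,<)$ via the complement map $a\mapsto\sup\{b: a+_u b \text{ defined}\}=u-a$, and from it recover the quotient group $([0,u),\tilde{+})\cong(M,+)$, the covering map $\pi|_I$, and a ``base point plus bounded sheet'' coordinatization of the relevant elements of $H$. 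This costs more work, but it buys two real things: it handles arbitrary cosets $nH+a$ with $a\in H$ not necessarily in $I$ (via reduction of the sheet modulo $m$, the generator of $\{k:ku\in nH\}$), and it correctly treats the case where the difference of two elements of $I$ falls outside $I$. That second point is not cosmetic: the paper's own reduction of the mixed-sign case, ``$a\equiv_n b$ iff $a\equiv_n -b$'' for $a\ge 0>b$, replaces $a-b$ by $a+b$ and so changes the relevant difference by $2b$, which is not in $nH$ in general; making that case work requires knowing which coset of $nH$ contains $u$, i.e.\ exactly the wrap-around bookkeeping you carry out. So your longer argument is not redundant machinery but supplies a step the terse proof glosses over. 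One stylistic remark: in the last step the sheet of $nw$ should be read off by iterating $\tilde{+}$ on $\pi(w)$ and counting the wrap-around branches (the raw partial sums $iw$ can leave $I$, so one cannot literally iterate $+_u$ on $w$); your parenthetical says essentially this, but it is worth stating precisely.
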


\begin{proof}
Note that the second claim follows from the first.
Observe that $a \in I$ is an element of $nH$ if and only if there is a $b \in I$ such that $nb = a$.
Therefore $nH \cap I$ is $(I,+_u,<)$-definable.
If $a,b \in [0,u)$ then $a - b \in I$.
So if $a,b \in I$ then $a \equiv_n b$ if and only if there is a $c \in nH \cap [0,u)$ such that $a +_u c = b$.
If $a,b \in (-u,0)$ then $a \equiv_n b$ if and only if $-a \equiv_n -b$.
If $a \in [0,u)$ and $b \in (-u,0)$ then $a \equiv_n b$ if and only if $a \equiv_n -b$.
\end{proof}

\subsection{Proof of Theorem~\ref{thm:arch-cyclic-orders}}
We continue with the notation of Section~\ref{section:cover}.
The right to left implication of Theorem~\ref{thm:arch-cyclic-orders} follows from Propositions~\ref{prop:weak-to-nip} and \ref{prop:cyclic-char}.

\begin{Prop}
\label{prop:cyclic-char}
Suppose that $(M,+,C)$ is archimedean.
Then $(M,+,C)$ is dp-minimal if and only if $(M,+)$ is non-singular.
\end{Prop}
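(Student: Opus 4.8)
The plan is to prove the two implications separately, using the universal cover $(H,+,<,u,\pi)$ of $(M,+,C)$ from Section~\ref{section:cover}. Recall from there that $\pi$ restricts to a bijection $[0,u)\to M$ under which $(M,+,C)$ is definable, with parameter $u$, in $(H,+,<)$ (as the set $[0,u)\subseteq H$ with the definable relations $\tilde+,\tilde C$), and that the linear order $\prec$ on $M$ and the subgroups $nM$ are $(M,+,C)$-definable. Fixing an injective character $\chi\colon M\to\R/\Z$ witnessing archimedeanness, density of $C$ forces $\chi(M)$ to be dense in $\R/\Z$, and each torsion group $M[n]$ is finite since it embeds into $(\R/\Z)[n]=\tfrac1n\Z/\Z$.

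For the right-to-left implication, suppose $(M,+)$ is non-singular. Applying the snake lemma to multiplication by $n$ on $0\to u\Z\to H\xrightarrow{\pi}M\to 0$, and using that $u\Z$ and $H$ are torsion-free, we obtain an exact sequence
$$0\longrightarrow M[n]\longrightarrow \Z/n\Z\longrightarrow H/nH\longrightarrow M/nM\longrightarrow 0,$$
so $|H/nH|<\aleph_0$ for every $n$ (both $M[n]$ and $M/nM$ being finite); hence $(H,+)$ is non-singular. As $H$ is a subgroup of $\R$, it is an archimedean ordered abelian group, so $(H,+,<)$ is dp-minimal by Fact~\ref{fact:dp-oag}. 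Since $(M,+,C)$ is definable with parameter $u$ in $(H,+,<)$ and dp-rank does not increase under passing to a parameter-definable structure, $\dprk(M,+,C)\le\dprk(H,+,<)=1$; and $\dprk(M,+,C)\ge 1$ since $M$ is infinite, so $(M,+,C)$ is dp-minimal.

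For the left-to-right implication, suppose $(M,+,C)$ is dp-minimal and, toward a contradiction, that $|M/nM|=\aleph_0$ for some $n$. Pass to a sufficiently saturated $\sM^\ast\models\mathrm{Th}(M,+,C)$; then $nM^\ast$ has index at least $\aleph_0$, and $nM^\ast$ is $\prec$-dense (equivalently, dense for the $C$-topology), since this is a first-order property of $(M,+,C)$ that holds because $\chi(nM)=n\chi(M)$ is dense in $\R/\Z$; hence every coset of $nM^\ast$ is $\prec$-dense. By a standard extraction argument choose mutually indiscernible sequences $(a_i)_{i\in\Z}$ and $(b_j)_{j\in\Z}$ in $\sM^\ast$ with the $a_i$ in pairwise distinct cosets of $nM^\ast$ and with $0\prec\cdots\prec b_{-1}\prec b_0\prec b_1\prec\cdots$; both constraints are preserved under extraction as they are $\emptyset$-type conditions on pairs. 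By $\prec$-density of $a_0+nM^\ast$ there is $d\in\sM^\ast$ with $d\in a_0+nM^\ast$ and $b_0\prec d\prec b_1$. Then $(a_i)_{i\in\Z}$ is not indiscernible over $d$, since $d-a_0\in nM^\ast$ but $d-a_1\notin nM^\ast$, and $(b_j)_{j\in\Z}$ is not indiscernible over $d$, since $b_0\prec d$ but $b_1\prec d$ fails; so $\dprk(M,+,C)\ge 2$, contradicting dp-minimality. Hence $(M,+)$ is non-singular.

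The main obstacle is the left-to-right direction: one must secure the mutual indiscernibility of $(a_i)$ and $(b_j)$ while retaining the combinatorial features (distinct cosets, $\prec$-increasing) that make the two rows independent, and then verify that the resulting pattern is consistent, which is precisely where $\prec$-density of the cosets of $nM^\ast$ enters. This is the standard mechanism by which a proper dense definable subgroup forces dp-rank at least $2$; archimedeanness is used only through $\chi$, to guarantee that $nM$ is dense and that the groups $M[n]$ are finite.
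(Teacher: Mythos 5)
Your proof is correct and follows essentially the same route as the paper: the right-to-left direction transfers non-singularity to the universal cover $(H,+,<)$ (the paper tensors the presentation of $H$ with $\Z/n\Z$ where you invoke the snake lemma, but it is the same computation) and then pulls dp-minimality back through the interpretation of $(M,+,C)$ in $(H,+,<)$, while the left-to-right direction builds a depth-two pattern from infinitely many dense cosets of $nM$ against a $\prec$-increasing sequence, which is exactly the paper's array argument written out at the level of mutually indiscernible sequences.
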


\noindent
We will apply Lemma~\ref{lem:short-exact} to prove Proposition~\ref{prop:cyclic-char}.

\begin{Lem}
\label{lem:short-exact}
Suppose $(H,+,<,u)$ is a universal cover of $(M,+,C)$ with covering map $\pi$.
Then $(H,+)$ is non-singular if and only if $(M,+)$ is non-singular.
\end{Lem}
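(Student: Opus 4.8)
The plan is to read the statement off from the short exact sequence of abelian groups
$$ 0 \longrightarrow u\Z \longrightarrow H \stackrel{\pi}{\longrightarrow} M \longrightarrow 0 $$
furnished by the definition of the universal cover: $\pi$ is surjective with kernel $u\Z$, and $u\Z$ is infinite cyclic, hence non-singular. The point is that non-singularity of an abelian group $A$ is detected, for each fixed $n\geq 1$, by finiteness of $A/nA$, and these quotients behave well along the displayed sequence.

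\emph{The direction ``$H$ non-singular implies $M$ non-singular''} needs nothing about cyclic orders: for each $n\geq 1$ the surjection $\pi$ induces a surjection $H/nH \longrightarrow M/nM$, so $|M/nM| \leq |H/nH| < \aleph_0$.

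\emph{For the converse}, fix $n\geq 1$ and examine the kernel of the induced surjection $H/nH \longrightarrow M/nM$. An element $h+nH$ lies in this kernel exactly when $\pi(h)\in nM$, equivalently when $h \in nH + u\Z$; hence the kernel is $(nH + u\Z)/nH \cong u\Z/(u\Z\cap nH)$. Since $nu\Z \subseteq nH\cap u\Z$, this group is a quotient of $u\Z/nu\Z \cong \Z/n\Z$ and so has at most $n$ elements. Thus $H/nH$ is an extension of the finite group $M/nM$ by a finite group, hence finite whenever $M$ is non-singular; as $n$ was arbitrary, $H$ is non-singular. Conceptually this is just the snake-lemma computation for multiplication by $n$ on the displayed sequence, specialised to the case where the kernel is $\Z$.

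I do not expect a real obstacle: the argument is a purely homological fact about short exact sequences of abelian groups, and the only input from the geometry of the universal cover is that $\ker\pi = u\Z$ with $u\Z$ infinite cyclic. The one point deserving a line of care is the identification of the kernel of $H/nH \to M/nM$ with $(nH+u\Z)/nH$ together with the containment $nu\Z \subseteq nH\cap u\Z$, both of which are routine.
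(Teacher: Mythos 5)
Your proof is correct and is essentially the paper's argument: both directions reduce to the short exact sequence $0 \to u\Z \to H \to M \to 0$ and the bound $|H/nH| \leq n\,|M/nM|$, which the paper obtains by tensoring the sequence with $\Z/n\Z$ and you obtain by identifying the kernel of $H/nH \to M/nM$ with $u\Z/(u\Z \cap nH)$ by hand. The only difference is presentational (explicit second-isomorphism-theorem computation versus right-exactness of $\otimes\, \Z/n\Z$), and your identification of that kernel, including the containment $nu\Z \subseteq u\Z \cap nH$, checks out.
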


\noindent
Below $\otimes$ is the tensor product of abelian groups.
We use some basic facts about $\otimes$.

\begin{proof}
The left to right implication holds as a quotient of a non-singular abelian group is non-singular.
We show that $|H/nH| \leq n|M/nM|$, the other implication follows.
Let $\iota : \Z \to H$ be given by $\iota(k) := ku$.
The sequence
$$ \Z \mathop{\longrightarrow}^{\iota} H \mathop{\longrightarrow}^{\pi} M \to 0 $$
is exact, so tensoring with $\Z/n\Z$ we get an exact sequence
$$ \Z \otimes \Z/n\Z \longrightarrow H \otimes \Z/n\Z \longrightarrow M \otimes \Z/n\Z \longrightarrow 0. $$
Recall that $A \otimes \Z/n\Z$ is isomorphic to $A/nA$ for any abelian group $A$.
So we get an exact sequence
$$ \Z/n\Z \longrightarrow H/nH \longrightarrow M/nM \longrightarrow 0. $$
So $|H/nH| \leq n |M/nM|$.
\end{proof}

\noindent
We now prove Proposition~\ref{prop:cyclic-char}.

\begin{proof}
Suppose that $(M,+)$ is non-singular.
So $(H,+)$ is non-singular by Lemma~\ref{lem:short-exact} hence $(H,+,<)$ is dp-minimal by Fact~\ref{fact:dp-oag}.
The manner in which $(M,+,C)$ is defined in $(H,+,<)$ above shows that $(M,+,C)$ is dp-minimal.
Now suppose that $(M,+)$ is singular.
Fix $n$ such that $|M/nM| \geq \aleph_0$.
As $(M,+,C)$ is archimedean and dense we see that $nM$ is dense in $M$, so each coset of $nM$ is also dense in $M$.
We construct an array violating dp-minimality.
Let $(a_i : i \in \N)$ be a sequence of elements of $M$ which lie in distinct cosets of $nM$.
Let $(I_i : i \in \N)$ be a sequence of pairwise disjoint nonempty open intervals in $M$.
Then $I_i \cap (a_j + nM) \neq \emptyset$ for all $i,j$.
It follows that $(a_i + nM : i \in \N)$ and $(I_i : i \in \N)$ form an array witnessing $\dprk(M,+,C) \geq 2$.
\end{proof}

\noindent
We now prove the left to right implication of Theorem~\ref{thm:arch-cyclic-orders}.
Let $\sI$ be the structure induced on $I$ by $\sM$.
Note that $\sI$ expands $(I,+_u,<)$.
Therefore $\sM$ and $\sI$ define isomorphic copies of each other and $\sM$ is dp-minimal if and only if $\sI$ is dp-minimal.

\begin{Lem}
\label{lem:cyclic-local}
Suppose $\sI$ is dp-minimal and $X \subseteq I$ is $\sI$-definable and nonempty.
Then there is a $p \in X$ and a nonempty open interval $J \subseteq I$ containing $p$ such that $J \cap X = J \cap Y$ where $Y$ is a finite union of cosets of $nH$ for some $n$.
\end{Lem}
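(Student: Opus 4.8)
The plan is to mimic the dense case of Lemma~\ref{lem:local-behavior}, itself a localization of \cite[Thm~3.6]{Simon-dp}, working in the local group $(I,+_u,<)$ in place of an ordered abelian group. First note that the conclusion is first-order: for fixed $n$ and $k$, the statement ``there are $p\in X$, an interval $J\ni p$, and $a_1,\dots,a_k$ with $J\cap X=J\cap\bigcup_{i\le k}(a_i+nH)$'' is expressible in $\sI$, using Lemma~\ref{lem:equiv} to define ``$z\in(a_i+nH)\cap I$'' inside $I$. Hence we may pass to a sufficiently saturated $\sI^{*}\succeq\sI$, let $X^{*}\subseteq I^{*}$ be defined by the same formula as $X$, and write $I^{*}=(-u^{*},u^{*})$ as an interval in the ambient ordered abelian group $H^{*}$; it suffices to prove the lemma for $\sI^{*}$. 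If $X$ is nowhere dense then $X$ is finite by Goodrick's result (Fact~\ref{fact:goodrick-nwd}, applied to $\sI$ as a dp-minimal structure on the dense linear order $I$), and any interval meeting $X$ in a single point works with $n=0$; so assume $X^{*}$ is dense in some interval around one of its points. Translating by that point (legitimate after shrinking, since then $+_u$ is genuine) and running the argument of \cite[Thm~3.6]{Simon-dp} as in the dense case of Lemma~\ref{lem:local-behavior}, we reduce to the situation where there is a symmetric interval $I_0\ni 0$ with $X^{*}$ dense in $I_0$ and such that $g+_u h\in X^{*}$ and $-g\in X^{*}$ whenever $g,h\in X^{*}$ and $g+_u h\in I_0$; in particular $0\in X^{*}$.

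Now use saturation to produce a nontrivial convex subgroup $G$ of $H^{*}$ contained in $I_0$. Since $(H,+,<)$ is archimedean (as $(M,+,C)$ is), it is regular, so $nH^{*}$ is dense for every $n$; a standard compactness argument then yields an infinitesimal $\varepsilon>0$ in $I_0$ whose multiples $\varepsilon,2\varepsilon,\dots$ all stay below a fixed positive $c\in I_0$, and $G:=\{x\in H^{*}:|x|\le m\varepsilon\text{ for some }m\in\N\}$ is a proper nontrivial convex subgroup of $H^{*}$ lying inside $I_0$. Then $+_u$ restricts to honest addition on $G$, so $(G,+,<)$ is an ordered abelian group; since $G$ is convex in $(I^{*},<)$, the structure $(\sI^{*},G)$ is dp-minimal by Fact~\ref{fact:shelah}, and hence the structure induced on $G$ is a dp-minimal expansion of $(G,+,<)$. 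As $X^{*}$ is dense in $I_0$ and $G$ is convex, $X^{*}\cap G$ is dense in $G$, and by the local closure property it is a subgroup of $(G,+)$; a dense subgroup of a convex subgroup is cofinal, so the convex hull of $X^{*}\cap G$ in $G$ is $G$. Lemma~\ref{lem:finite-index} now shows that $X^{*}\cap G$ is a finite union $H'$ of cosets of $nG$ for some $n$. Since convex subgroups are pure --- $nH^{*}\cap G=nG$, because a positive element of $H^{*}$ outside $G$, together with all its multiples, exceeds $G$ --- for any interval $J\ni 0$ contained in $G$ we get $J\cap X^{*}=J\cap H'=J\cap Y$, where $Y$ is the corresponding finite union of cosets of $nH^{*}$. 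With $p=0\in X^{*}$ this is the desired conclusion for $\sI^{*}$, which by the first-order remark holds for $\sI$.

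The main obstacle is the localization of \cite[Thm~3.6]{Simon-dp}: that argument is already delicate, and one must verify that every sum and difference it forms from elements of $X$ stays in a fixed bounded region of $I$, where $+_u$ agrees with genuine addition, so that the local closure conclusion is legitimate --- this is the only point where the local-group structure of $(I,+_u,<)$, as opposed to an honest group, intervenes. A secondary point to check is that $G$ is non-singular, as this is needed to invoke Fact~\ref{fact:group} inside the proof of Lemma~\ref{lem:finite-index}; it follows from non-singularity of $(H^{*},+)$ --- a consequence of non-singularity of $(M,+)$ via Lemma~\ref{lem:short-exact}, together with preservation of the finite quotient sizes $|H/nH|$ in elementary extensions --- and the purity $nH^{*}\cap G=nG$, which yields an embedding $G/nG\hookrightarrow H^{*}/nH^{*}$.
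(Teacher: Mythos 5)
Your overall strategy is the right one and is close to the paper's: translate so that $0\in X$, pass to a saturated model, use saturation to extract a convex subgroup of the cover sitting inside $I$ on which $+_u$ is honest addition, observe that the induced structure there is dp-minimal via Fact~\ref{fact:shelah}, obtain a finite union of cosets of $nG$, and convert to cosets of $nH$ by purity of the convex subgroup. Your explicit treatment of the purity identity $nH^{*}\cap G=nG$ and of non-singularity of $G$ supplies details the paper leaves to the reader, and is correct.

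The one substantive divergence is an inversion of two steps, and it is exactly where your proof is incomplete. You propose to run the closure argument of \cite[Thm 3.6]{Simon-dp} \emph{first}, inside the local group $(I^{*},+_u,<)$, to arrange that $X^{*}$ is dense in a symmetric interval $I_0$ and closed under the partial operations, and only \emph{then} extract the convex subgroup $G\subseteq I_0$. You yourself flag "the localization of \cite[Thm 3.6]{Simon-dp}" as the main obstacle --- one must check that every sum and difference formed in that delicate argument stays where $+_u$ agrees with genuine addition --- and you do not carry out that verification, so as written this step is a gap. The paper's proof sidesteps it entirely by reversing the order: it first uses saturation to produce a convex $C\subseteq I$ closed under $+_u$ and inverses, so that $(C,+_u,<)$ is an honest ordered abelian group and the structure $\mathscr{C}$ induced on it is dp-minimal, and \emph{then} applies Lemma~\ref{lem:local-behavior} (dense case) as a black box to $\mathscr{C}$ and $C\cap X$. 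No re-examination of Simon's argument in a local-group setting is ever needed. Your proof becomes correct, and essentially identical to the paper's, if you make the same reordering; as it stands, the key analytic step is asserted rather than proved.
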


\begin{proof}
The lemma is trivial when $X$ is empty, so we suppose $X$ is nonempty.
Fix $a \in X$ and let $L \subseteq I$ be an open interval containing $a$ such that $L - a \subseteq I$.
Note that the map $L \to L - a$ given by $x \mapsto x - a$ is then $(I,+_u,<)$-definable. After replacing $X$ and $L$ with $(X \cap L) - a$ and $L - a$ we suppose that $0 \in X$.
After passing to an elementary extension if necessary suppose $\sI$ is sufficiently saturated.
Applying saturation we obtain  convex $C \subseteq I$ which contains a positive element and is closed under addition and additive inverse, so $(C,+_u,<)$ is an ordered abelian group.
By Fact~\ref{fact:shelah} the structure $\mathscr{C}$ induced on $C$ by $\sI$ is dp-minimal.
Applying Lemma~\ref{lem:local-behavior} to $\mathscr{C}$ and $C \cap X$ we obtain a $p \in C \cap X$ and a nonempty open interval $J \subseteq C$ such that $p \in J$ and $J \cap X = J \cap Y$ where $Y$ is a finite union of cosets of $nC$ for some $n$.
The lemma now follows easily.
\end{proof}

\noindent
Lemma~\ref{lem:cyclic-gen} now follows in the same way as Theorem~\ref{thm:gen}.
We omit the proof.

\begin{Lem}
\label{lem:cyclic-gen}
Suppose $\sI$ is dp-minimal and $X \subseteq I$ is $\sI$-definable.
Then $X$ is a finite union of sets of the form $U \cap (nH + a)$ where $U \subseteq I$ is definable and open and $a \in I$.
\end{Lem}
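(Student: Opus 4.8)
The plan is to follow the proof of Theorem~\ref{thm:gen}, with Lemma~\ref{lem:cyclic-local} playing the role of Lemma~\ref{lem:local-behavior} and with the universal cover $(H,+,<,u)$ absorbing the places where the proof of Theorem~\ref{thm:gen} invokes results on ordered abelian groups. We may assume $X$ is infinite. First I would upgrade Lemma~\ref{lem:cyclic-local} to a uniform local statement: given any nonempty open interval $I'\subseteq I$, either $I'\cap X=\emptyset$ (and $I'$ itself works with $Y=\emptyset$) or, applying Lemma~\ref{lem:cyclic-local} to the $\sI$-definable set $I'\cap X$ and intersecting the resulting neighbourhood with $I'$, we obtain a nonempty open interval $J\subseteq I'$ and $n$ with $J\cap X=J\cap Y$ for $Y$ a finite union of cosets of $nH$. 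Since $\sI$ is dp-minimal, $(M,+)$ is non-singular by Proposition~\ref{prop:cyclic-char}, hence $(H,+)$ is non-singular by Lemma~\ref{lem:short-exact} and $|H/nH|<\aleph_0$ for every $n$; by Lemma~\ref{lem:equiv} each set $I\cap(nH+a)$ is $(I,+_u,<)$-definable, and as $nH$ has finite index it is dense in $H$, so its cosets meet $I$ and we may choose coset representatives inside $I$. A routine compactness argument then yields a single $n$ and finitely many cosets $nH+b_1,\dots,nH+b_k$ with each $b_i\in I$ such that every nonempty open $I'\subseteq I$ contains a nonempty open interval $J$ with $J\cap X=J\cap\bigcup_{i\in A}(nH+b_i)$ for some $A\subseteq\{1,\dots,k\}$.

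Next, for each $A\subseteq\{1,\dots,k\}$ let $U_A$ be the union of all nonempty open intervals $J\subseteq I$ satisfying $J\cap X=J\cap\bigcup_{i\in A}(nH+b_i)$. Each $U_A$ is open and $\sI$-definable, and $U_A\cap X=\bigcup_{i\in A}\bigl(U_A\cap(nH+b_i)\cap I\bigr)$. The previous paragraph shows $\bigcup_A U_A$ is dense in $I$, so $Z:=I\setminus\bigcup_A U_A$ is a nowhere dense $\sI$-definable subset of $I$. If $Z$ is finite then
\[
X=(Z\cap X)\ \cup\ \bigcup_{A\subseteq\{1,\dots,k\}}\ \bigcup_{i\in A}\bigl(U_A\cap(nH+b_i)\cap I\bigr),
\]
and each point of the finite set $Z\cap X$ has the form $V\cap(0\cdot H+a)$ with $V\subseteq I$ open (taking $n=0$, so $nH=\{0\}$), so $X$ has the required form.

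So everything reduces to the cyclic analogue of Fact~\ref{fact:goodrick-nwd}: every nowhere dense $\sI$-definable subset of $I$ is finite. This is the step I expect to be the main obstacle, and it is genuinely where $+_u$, as opposed to the mere dense order, must be used — recall that by \cite[Proposition 4.7]{Simon-dp} the expansion of a dense linear order by all of its subsets is dp-minimal, so no purely order-theoretic argument can succeed. To prove it I would pass to a sufficiently saturated elementary extension (nowhere-density and finiteness of a definable set are both first-order) and argue as in Lemma~\ref{lem:cyclic-local}: assuming toward a contradiction that $Z$ is infinite, one first reduces — using the definable successor-gap function on $Z$, whose image is again an infinite definable set — to the case that $Z$ has an accumulation point $a$; after translating by $-a$ on a neighbourhood of $a$ (which is $(I,+_u,<)$-definable there) assume $0\in\mathrm{cl}(Z)$; saturation then yields a convex subgroup $C\subseteq I$ of the local group $(I,+_u,<)$ containing a neighbourhood of $0$, and by Fact~\ref{fact:shelah} the structure $\mathscr{C}$ induced on $C$ by $\sI$ is a dp-minimal expansion of the dense ordered abelian group $(C,+_u,<)$; since $0$ accumulates $Z$ the set $C\cap Z$ is infinite, and since every interval of $C$ is an interval of $I$ it is nowhere dense in $C$, so Fact~\ref{fact:goodrick-nwd} applied to $\mathscr{C}$ forces $C\cap Z$ to be finite, a contradiction. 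The remaining book-keeping (that the successor-gap reduction terminates, and that an infinite definable subgroup of $(I,+_u,<)$ cannot arise, via Fact~\ref{fact:finite-index} or Lemma~\ref{lem:coset-intersect}) is routine; one could alternatively run the whole argument through a convex equivalence relation on $I$ built exactly as in Theorem~\ref{thm:gen-convex}.
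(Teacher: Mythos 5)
Your overall architecture is exactly the one the paper intends (it omits the proof, saying only that it ``follows in the same way as Theorem~\ref{thm:gen}''): localize via Lemma~\ref{lem:cyclic-local}, use compactness and non-singularity to fix a single $n$ and finitely many cosets, define the open sets $U_A$, and observe that everything reduces to showing that the nowhere dense definable set $Z=I\setminus\bigcup_A U_A$ is finite, i.e.\ to a cyclic analogue of Fact~\ref{fact:goodrick-nwd}. You correctly identify that last step as the real content, and your bookkeeping up to that point is fine (the worry about cosets of $nH$ meeting $I$ is moot: only cosets meeting $I$ can contribute to $X\subseteq I$, and for those you may pick representatives in $I$).

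The gap is in your proof of that last step. The reduction ``an infinite definable $Z$ with no accumulation point yields, via the successor-gap function, a contradiction'' is not justified and I do not see how to make it work: $+_u$ is only partially defined, successors and infima in $Z$ need not exist, and the underlying claim that an infinite definable subset of a saturated dense order must have an accumulation point is false in general (consider $(\Q,<,\Z)$); in the dp-minimal ordered-group setting it is true only \emph{because of} Fact~\ref{fact:goodrick-nwd}, which is precisely what you are trying to transfer, so the argument is circular at this point. The repair uses a tool you mention only in passing, namely Lemma~\ref{lem:coset-intersect-1}. Work in a saturated model and let $C\subseteq I$ be the convex subgroup of $(I,+_u,<)$ produced by saturation, as in Lemma~\ref{lem:cyclic-local}. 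Since no sums in $C$ wrap around, $\pi$ (equivalently $\iota$) identifies $C$ with an infinite subgroup $G$ of the \emph{global} group $(M,+)$; $G$ is convex, hence externally definable, so $(\sM,G)$ is dp-minimal by Fact~\ref{fact:shelah}. For each $g\in M$ the set $(Z-g)\cap C$ is a nowhere dense definable subset of the dense ordered abelian group $(C,+_u,<)$ with its induced dp-minimal structure, hence finite by Fact~\ref{fact:goodrick-nwd}; thus $Z$ meets every coset of $G$ in a finite set, and Lemma~\ref{lem:coset-intersect-1} (whose proof adapts verbatim, using the definable linear order $\prec$ to select one point per coset) forces $Z$ to be finite. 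With that substitution your proof is complete.
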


\noindent
We now describe $\Sa I$-definable sets.

\begin{Lem}
\label{lem:cyclic-open}
Suppose that $(M,+,C)$ is archimedean and $\sI$ is dp-minimal.
Then every $\sI$-definable subset of $I$ is a finite union of sets of the form $J \cap (a + nH)$ for convex $J \subseteq I$.
\end{Lem}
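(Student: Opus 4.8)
The plan is to bootstrap from Lemma~\ref{lem:cyclic-gen}, which already expresses an $\sI$-definable set $X$ as a finite union of sets $U \cap (nH + a)$ with $U$ open; the remaining work is to show that each open $U$ can be taken to be a finite union of convex sets. This is where the archimedean hypothesis must enter: in a non-archimedean cyclically ordered group one expects pathological open sets, just as in the valuational case of Theorem~\ref{thm:cc}, but archimedeanity is exactly the ``no non-trivial convex subgroup'' condition transported through the universal cover, so the relevant definable open sets should decompose into finitely many intervals.

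The key steps I would carry out are as follows. First, reduce to showing that every $\sI$-definable open subset $U$ of $I$ is a finite union of convex sets; given this, intersect with each coset $nH + a$ and invoke Lemma~\ref{lem:equiv} and Fact~\ref{fact:cc-boolean}-style boolean closure to finish. Second, to prove the claim about open sets, pass to a sufficiently saturated elementary extension and mimic the argument of Proposition~\ref{prop:nonval1} / Proposition~\ref{prop:going-down}: a definable open set $U$ fails to be a finite union of convex sets only if it is ``complicated'' at infinitely many scales, which would produce an $\sI$-definable convex equivalence relation on $I$ with infinitely many infinite classes, or equivalently an $\sI$-definable family of cuts with infinitely many non-valuational elements. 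Third, use the archimedean assumption on $(M,+,C)$ — via the universal cover $(H,+,<,u)$, archimedeanity says there is no $\sI$-definable non-trivial convex subgroup of the relevant local group, so every relevant cut is valuational in the sense that would be needed — together with Fact~\ref{fact:goodrick} (applied to the dp-minimal dense ordered abelian group $(C,+_u,<)$ sitting inside $I$ after saturation, exactly as in the proof of Lemma~\ref{lem:cyclic-local}) to derive a contradiction. Since a nowhere dense definable open set in a dp-minimal dense ordered abelian group is finite by Fact~\ref{fact:goodrick-nwd}, the complement of a finite union of convex sets being definable and open forces it to be finite, hence $U$ is a finite union of convex sets up to a finite set, which is again a finite union of convex sets.

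**The main obstacle** I anticipate is the bookkeeping in transferring between the cyclic picture on $I$ and the linear picture on $H$: the convexity notions must be matched carefully (a convex subset of $I$ in the local order $<$ versus an interval with respect to $C$ on $M$), and one must be sure that ``finite union of convex subsets of $I$'' in the end yields ``finite union of convex subsets of $M$'' of the form $a + nJ$ as demanded by Theorem~\ref{thm:arch-cyclic-orders}. The archimedean hypothesis is doing real work here precisely to prevent the coset $nH + a$ from wrapping around in a way that breaks convexity; I would isolate this in a short lemma stating that, under archimedeanity, $\pi$ restricted to a suitable interval is monotone, so preimages and images of convex sets stay convex. Once that interface lemma is in hand, the rest is a routine combination of Lemma~\ref{lem:cyclic-gen}, Fact~\ref{fact:goodrick}, and Fact~\ref{fact:goodrick-nwd}, structurally parallel to the discrete-vs-dense split already used repeatedly in Section~\ref{section:convex}.
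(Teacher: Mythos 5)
Your plan is essentially the paper's proof: reduce via Lemma~\ref{lem:cyclic-gen} to showing that a definable open $U\subseteq I$ is a finite union of convex sets, pass to the definable family $\Cal C$ of cuts given by the convex components of $U$, observe that $\Cal C$ is nowhere dense, use archimedeanity to see that every cut in play is \emph{non}-valuational, and conclude finiteness from Fact~\ref{fact:goodrick}. Two points need tightening. First, you have the valuational/non-valuational direction muddled: archimedean means every cut is non-valuational, and Fact~\ref{fact:goodrick} says a definable family with infinitely many non-valuational cuts is somewhere dense, so nowhere density of $\Cal C$ forces $\Cal C$ finite; your phrase ``every relevant cut is valuational in the sense that would be needed'' would break the argument if taken literally. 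Second, your proposal to apply Fact~\ref{fact:goodrick} to a saturated convex subgroup $(C,+_u,<)$ of $I$, as in Lemma~\ref{lem:cyclic-local}, does not suffice here: that device only sees the cuts lying in a single coset-sized neighbourhood, whereas the infinitely many components of $U$ are spread across all of $I$ and need not accumulate inside any one such $C$. What is actually needed (and what the paper does) is to rerun the \emph{proof} of Fact~\ref{fact:goodrick} directly in the local group $(I,+_u,<)$; the paper asserts this is a straightforward modification, but it is a global adaptation of the argument, not a local application of the stated fact. Your closing worry about matching convexity in $I$ with convexity in $M$ and the monotonicity of $\pi$ is legitimate but belongs to the deduction of Theorem~\ref{thm:arch-cyclic-orders} from this lemma, not to the lemma itself, whose conclusion is stated entirely inside $I$.
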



\begin{proof}
By Lemma~\ref{lem:cyclic-gen} it suffices to let $U$ be an $\Sa I$-definable open subset of $I$ and show that $U$ is a finite union of convex sets.
We define a convex equivalence relation $E$ on $I$ by declaring $a,b$ to be $E$-equivalent if and only if $a,b$ lie in the same convex component of $U$.
Let $\Cal C$ be the family of downwards closures of $E$-classes.
So $\Cal C$ is an $\Sa I$-definable family of cuts.
It suffices to show that $\Cal C$ is finite.
It is easy to see that $\Cal C$ is nowhere dense.
As $(H,+,<)$ is archimedean every $C \in \Cal C$ is non-valuational.
A straightforward modification of the proof of Fact~\ref{fact:goodrick} shows that $\Cal C$ is finite.
\end{proof}

\noindent
We now prove the left to right implication of Theorem~\ref{thm:arch-cyclic-orders}.

\begin{proof}
Suppose $\sM$ is dp-minimal.
Then $(M,+)$ is non-singular by Lemma~\ref{lem:short-exact}.
We suppose that $X$ is an $\Sa M$-definable subset of $M$ and show that $X$ is a finite union of sets of the form $a + nJ$ for convex $J \subseteq M$.
The same proof goes through for elementary extensions of $\Sa M$, it easy follows that $\mathrm{Th}(\Sa M)$ is weakly $\mathrm{Th}(M,+,C)$-minimal.
\medskip

\noindent
Let $Y$ be the set of $a \in [0,u)$ such that $\pi(a) \in X$.
As $\sI$ is dp-minimal if follows by Lemma~\ref{lem:cyclic-gen}  that $Y$ is a finite union of sets of the form $J \cap (a + nM)$ for convex $J \subseteq I$.
So we may suppose that $Y$ is of this form.
As $X = \pi(Y)$ and $\pi$ is a homomorphism we have $X = \pi(a) + n\pi(J)$.
It is easy to see that $\pi(J)$ is convex, this is also shown carefully in \cite[Lemma 3.1]{cyclic-orders}.
\end{proof}

\subsection{An application to dp-minimal expansions of $(\Z,+)$}
Given irrational $\alpha \in \R$ let $\chi_\alpha$ be the character $\Z \to \R/\Z$ given by $\chi_\alpha(k) = \alpha k + \Z$.
Every injective character $\Z \to \R/\Z$ is of this form.
Let $S_\alpha$ be the cyclic group order induced on $\Z$ by $\chi_\alpha$.
It is known that any cyclic group order on $(\Z,+)$ is one of the following, see for example \cite[Proposition 2.5]{cyclic-orders}.
\begin{enumerate}
    \item $S_\alpha$ for irrational $\alpha \in \R$,
    \item $S^+$ where $S^+(i,j,k)$ if and only if $i < j <k$ or $j < k < i$ or $k < i < j$,
    \item $S^-$ where $S^-(i,j,k)$ if and only if $k < j < i$ or $j < i < k$ or $i < k < k$.
\end{enumerate}
It is easy to see that $(\Z,+,S^+)$ and $(\Z,+,S^-)$ are both interdefinable with $(\Z,+,<)$.
So Corollary~\ref{cor:cyclic-on-Z} follows from Theorem~\ref{thm:arch-cyclic-orders} and Fact~\ref{fact:Z}.

\begin{Cor}
\label{cor:cyclic-on-Z}
Let $C$ be a cyclic group order on $(\Z,+)$ and $\mathscr{Z}$ expand $(\Z,+,C)$.
Then $\mathscr{Z}$ is dp-minimal if and only if $\mathrm{Th}(\mathscr{Z})$ is weakly $\mathrm{Th}(\Z,+,C)$-minimal.
\end{Cor}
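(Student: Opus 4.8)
The plan is to argue by cases according to the classification of cyclic group orders on $(\Z,+)$ recalled just above: $C$ is either $S_\alpha$ for some irrational $\alpha\in\R$, or one of $S^+,S^-$. In every case the first observation is that $(\Z,+)$ is non-singular, since $|\Z/n\Z| = n < \aleph_0$ for all $n\ge 1$. This is precisely the auxiliary hypothesis appearing in the conclusions of Theorem~\ref{thm:arch-cyclic-orders} and of Fact~\ref{fact:dp-oag}, so it will always be discharged for free and never needs mentioning again.

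In the case $C = S_\alpha$, the point is that $(\Z,+,S_\alpha)$ falls directly under Theorem~\ref{thm:arch-cyclic-orders}. Indeed $S_\alpha = S_{\chi_\alpha}$ for the injective character $\chi_\alpha : \Z\to\R/\Z$, so $(\Z,+,S_\alpha)$ is archimedean by definition; and it is dense, because $\{k\alpha+\Z : k\in\Z\}$ is dense in $\R/\Z$ for irrational $\alpha$, whence the induced group topology is non-discrete (this is what is needed for the standing assumption of that section). Applying Theorem~\ref{thm:arch-cyclic-orders} with $(M,+,C)=(\Z,+,S_\alpha)$ and $\sM=\mathscr{Z}$, and using non-singularity of $(\Z,+)$ to absorb the non-singularity clause, yields exactly the asserted equivalence.

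In the cases $C=S^+$ and $C=S^-$, I would use the (easy) fact noted above that $(\Z,+,S^+)$ and $(\Z,+,S^-)$ are each interdefinable with $(\Z,+,<)$; concretely $\{x:x>0\}$ is defined by $S^+(-x,0,x)$, and conversely $S^\pm$ is defined from $<$. Consequently weak $\mathrm{Th}(\Z,+,C)$-minimality coincides with weak $\mathrm{Th}(\Z,+,<)$-minimality, and $\mathrm{Th}(\Z,+,C)=\mathrm{Th}(\Z,+,<)$ is dp-minimal. For the forward direction: if $\mathscr{Z}$ is dp-minimal then, since $\mathscr{Z}$ expands $(\Z,+,<)$ up to interdefinability, Fact~\ref{fact:Z} shows every $\mathscr{Z}$-definable subset of every $\Z^n$ is $(\Z,+,<)$-definable, so $\mathscr{Z}$ is interdefinable with $(\Z,+,<)$ and $\mathrm{Th}(\mathscr{Z})$ is trivially weakly $\mathrm{Th}(\Z,+,<)$-minimal. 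For the backward direction: weak $\mathrm{Th}(\Z,+,<)$-minimality of $\mathrm{Th}(\mathscr{Z})$ together with dp-minimality of $\mathrm{Th}(\Z,+,<)$ forces $\mathrm{Th}(\mathscr{Z})$, hence $\mathscr{Z}$, to be dp-minimal by Proposition~\ref{prop:weak-to-nip}.

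I do not expect a genuine obstacle: all the substantive content is already packaged in Theorem~\ref{thm:arch-cyclic-orders} (for $S_\alpha$) and in Facts~\ref{fact:Z} and~\ref{fact:shelah} (for $S^\pm$). The only things to verify are the two elementary reductions — that $S_\alpha$ meets the archimedean and density hypotheses of the cyclic-order section, and that $S^+,S^-$ are interdefinable with $(\Z,+,<)$ — and both are routine.
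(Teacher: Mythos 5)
Your proof is correct and follows essentially the same route as the paper: the paper likewise invokes the classification of cyclic group orders on $(\Z,+)$, applies Theorem~\ref{thm:arch-cyclic-orders} in the archimedean dense case $S_\alpha$, and handles $S^{\pm}$ via interdefinability with $(\Z,+,<)$ together with Fact~\ref{fact:Z}. Your write-up just supplies the routine verifications (density and archimedeanness of $S_\alpha$, non-singularity of $\Z$, the explicit interdefinability) that the paper leaves implicit.
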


\noindent
The only other known examples of dp-minimal expansions of $(\Z,+)$ are $(\Z,+)$ and $(\Z,+,\prec_p)$ where $p$ is prime and $k \prec_p k'$ if and only if the $p$-adic valuation of $k$ is strictly less than that of $k'$.
It is a theorem of Conant and Pillay~\cite{ConantPillay} that there are no proper stable dp-minimal expansions of $(\Z,+)$.
It is natural to conjecture, analogously with the remarks in Section~\ref{section:fields}, that an unstable dp-minimal expansion of $(\Z,+)$ defines either a cyclic group order or a (perhaps generalized) valuation.
\medskip

\noindent
Let $\mathscr{Z}$ be an expansion of $(\Z,+,\prec_p)$.
It is natural to ask if $\mathscr{Z}$ is dp-minimal if and only if $\mathrm{Th}(\Sa Z)$ is weakly $\mathrm{Th}(\Z,+,\prec_p)$-minimal.
This fails.
Let $$ \pexp(\alpha) := \sum_{n = 0}^{\infty} \frac{\alpha^n}{n!} \quad \text{for all  } \alpha \in p\Z_p $$
be the $p$-adic exponential and $\mathscr{P}$ be the expansion of $(\Z,+)$ by all sets of the form $\{ (k_1,\ldots,k_n) \in \Z^n : (\pexp(pk_1),\ldots,\pexp(pk_n)) \in X \}$ for $(\Z_p,+,\times)$-definable $X \subseteq \Z^n_p$.
It will be shown in forthcoming work that $\mathscr{P}$ is a dp-minimal expansion of $(\Z,+,\prec_p)$ and
it is easy to see that $\mathscr{P}$ is not weakly $(\Z,+,\prec_p)$-minimal.

\section{Dp-minimal expansions of divisible archimedean groups}
\label{section:dense-pairs}
\noindent
Suppose that $H$ is a dense divisible subgroup of $(\R,+)$ and $\Sa H$ expands $(H,+,<)$.
In this section we show that if $\Sa H$ is dp-minimal then there is a canonical o-minimal expansion $\Sa R$ of $(\R,+,<)$ such that $\Sh H$ is interdefinable with the structure induced on $H$ by $\Sa R$.
We first describe $\Sa R$ in the case when $\Sa H$ is o-minimal.
Suppose $\Sa H$ is o-minimal.
It is a theorem of Laskowski and Steinhorn~\cite{LasStein} that $\Sa H$ is an elementary substructure of a unique o-minimal expansion $\Sa R$ of $(\R,+,<)$.
Note that the structure induced on $H$ by $\Sa R$ is a reduct of $\Sh H$.
We show that $\Sh H$ is a reduct of the induced structure.
Suppose that $X$ is an $\Sh H$-definable subset of $H^n$.
Let $\Sa N$ be a sufficiently saturated elementary extension of $\Sa R$ and $X^*$ be an $\Sa N$-definable subset of $N^n$ such that $X = X^* \cap H^n$.
By the Marker-Steinhorn theorem $\Sh R$ is interdefinable with $\Sa R$, hence $X^* \cap \R^n$ is $\Sa R$-definable.
Thus $X = (X^* \cap \R^n) \cap H^n$ is definable in the structure induced on $H$ by $\Sa R$.
\medskip

\noindent 
We first gather some background.
An $\nip$ structure $\sM$ is \textbf{Shelah complete} if $\Sh M$ is interdefinable with $\Sa M$.
It follows easily from Fact~\ref{fact:shelah} that $\Sh M$ is Shelah complete when $\Sa M$ is $\nip$, so an $\nip$ structure is Shelah complete if and only if it is interdefinable with the Shelah expansion of some structure.
Fact~\ref{fact:ms} is a special case of the Marker-Steinhorn theorem~\cite{MaSt-definable}.

\begin{Fact}
\label{fact:ms}
Any o-minimal expansion of $(\R,<)$ is Shelah complete.
\end{Fact}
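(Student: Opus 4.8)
The plan is to deduce this immediately from the Marker--Steinhorn theorem on definability of types, via the standard dictionary between definable types and externally definable sets. Unwinding the definition, $\Sa R$ is Shelah complete exactly when every externally definable subset of every $R^k$ is already $\Sa R$-definable (the reverse inclusion being trivial). So fix an $|R|^+$-saturated $\Sa N \succ \Sa R$, an $\Sa N$-definable $X \subseteq N^k$ given by $X = \{ a \in N^k : \Sa N \models \varphi(a,b) \}$ for a formula $\varphi(x,y)$ and a parameter $b \in N^{|y|}$, and put $Y := X \cap R^k$. We must produce an $\Sa R$-formula (with parameters from $R$) defining $Y$.

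First I would pass to the type $p := \tp(b/R)$, a complete type over the underlying set $\R$ that is realized in the elementary extension $\Sa N$. The crucial input is that, since $\Sa R$ is an o-minimal expansion of $(\R,<)$ and the ordered set $(\R,<)$ is Dedekind complete, the Marker--Steinhorn theorem \cite{MaSt-definable} applies and shows that $p$ is definable over $R$. In particular, for the formula $\varphi(x,y)$ there is an $\Sa R$-formula $\psi(x)$ (with parameters from $R$) such that for every $a \in R^k$ we have $\varphi(a,y) \in p$ if and only if $\Sa R \models \psi(a)$.

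It then remains only to observe that for $a \in R^k$ the condition $a \in Y$ is equivalent to $\Sa N \models \varphi(a,b)$, which, as all coordinates of $a$ lie in $R$, is equivalent to $\varphi(a,y) \in p$, hence to $\Sa R \models \psi(a)$. Thus $Y = \{ a \in R^k : \Sa R \models \psi(a) \}$ is $\Sa R$-definable. Since $X$ and $k$ were arbitrary, every externally definable subset of every $R^k$ is $\Sa R$-definable, i.e. $\Sh R$ is interdefinable with $\Sa R$, as desired.

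The entire substance of the argument sits in the appeal to Marker--Steinhorn: the definability-of-types statement genuinely uses completeness of the underlying order (it fails for general o-minimal structures whose order admits cuts realized in proper elementary extensions), so there is no hope of a purely formal proof, and everything else is bookkeeping. Accordingly, in the write-up I would simply cite \cite{MaSt-definable} for the type-definability statement rather than reprove it; the only point requiring care is matching its hypothesis (Dedekind completeness of $\R$) correctly, which is exactly why the statement is restricted to o-minimal expansions of $(\R,<)$.
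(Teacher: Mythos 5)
Your argument is correct and is exactly the route the paper intends: it states this as a special case of the Marker--Steinhorn theorem and cites \cite{MaSt-definable} without further proof, relying on precisely the standard dictionary between definable types over a Dedekind complete o-minimal structure and externally definable sets that you spell out. Your unpacking of that dictionary is accurate, so there is nothing to add.
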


\noindent
The structure \textbf{induced} on $B \subseteq M^n$ by $\Sa M$ is the structure on $B$ with an $m$-ary predicate defining $B^m \cap X$ for every $\mathscr{M}$-definable $X \subseteq M^{mn}$.
If $\Sa M$ expands a linear order then the \textbf{open core} $\Sa M^\circ$ of $\Sa M$ is the reduct of $\Sa M$ generated by all closed $\Sa M$-definable sets.
Suppose that $\Sa M$ is an o-minimal expansion of an ordered abelian group $(M,+,<)$ and $H$ is a dense divisible subgroup of $(M,+)$.
We say that $(\Sa M,H)$ is a \textbf{dense pair} if for all $(\Sa M,H) \preceq (\Sa N,K)$ we have
\begin{enumerate}[label=(A\arabic*)]
\item $(\Sa M,H)$ is $\nip$,
\item $(\Sa N,K)^\circ$ is interdefinable with $\Sa N$, and
\item every $(\Sa N,K)$-definable subset of $K^n$ is of the form $K^n \cap X$ for an $\Sa N$-definable subset $X$ of $N^n$.
\end{enumerate}
We also use the term ``dense pair" in slight variations of the situation above.
What we mean in these cases should be clear to the reader.
By $($A$3)$ the theory of the structure induced on $H$ by $(\Sa M,H)$ is weakly o-minimal and therefore dp-minimal.
\medskip

\noindent
Suppose that $H$ is a dense divisible subgroup of $(\R,+,<)$.
In this section we describe a canonical correspondence between Shelah complete dp-minimal expansions of $(H,+,<)$ and dense pairs $(\Sa R,H)$.
We show that if $(\Sa R,H)$ is a dense pair then the structure induced on $H$ by $(\Sa R,H)$ is Shelah complete and that for every Shelah complete dp-minimal expansion $\Sa H$ of $(H,+,<)$ there is a canonical o-minimal expansion $\Sa R$ of $(\R,+,<)$ such that $\Sa H$ is interdefinable with the structure induced on $H$ by $(\Sa R,H)$.

\subsection{Weakly o-minimal non-valuational structures}
Suppose that $(M,+,<)$ is a divisible ordered abelian group and $\sM$ is a weakly o-minimal non-valuational expansion of $(M,+,<)$.
Let $\overline{M}$ be the collection of all nonempty $\sM$-definable cuts in $M$ which are bounded above.
(Recall our convention that every cut in $M$ either does not have a supremum or is of the form $(-\infty,a]$ for some $a \in M$).
Let $\pmb{\Sa M}$ be the structure on $\overline{M}$ with an $n$-ary predicate defining the closure in $\overline{M}^n$ of every $\mathscr{M}$-definable subset of $M^n$.
The natural addition on cuts makes $\pmb{\sM}$ an expansion of an divisible ordered abelian group.
The completion $\pmb{\sM}$ is studied in \cite{Wencel1, Wencel, Keren-thesis}.
The expansion $(\pmb{\sM},M)$ of $\pmb{\sM}$ by a unary predicate defining $M$ is studied in \cite{BaHa-a}.

\begin{Fact}
\label{fact:dense-pair}
The completion $\pmb{\Sa M}$ is o-minimal,
$(\pmb{\Sa M},M)$ is a dense pair, and the structure induced on $M$ by $(\pmb{\sM},M)$ is interdefinable with $\sM$.
\end{Fact}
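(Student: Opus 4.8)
The plan is to obtain the three assertions from the structure theory of weakly o-minimal non-valuational expansions of divisible ordered abelian groups, due to Wencel, together with the theory of dense pairs; the references \cite{Wencel1, Wencel, Keren-thesis, BaHa-a} carry the technical weight, so what I would supply is the bookkeeping connecting them to the precise statement here. The deep input for o-minimality of $\pmb{\Sa M}$ is Wencel's strong cell decomposition: every $\sM$-definable subset of $M^n$ is a finite union of strong cells, described by continuous functions valued in $\overline M$, whose closures in $\overline M^n$ are genuine o-minimal cells. In particular, for $n = 1$, an $\sM$-definable subset of $M$ is a finite union of convex sets with non-valuational endpoints, so its closure in $\overline M$ is a finite union of points and closed intervals. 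The substantial point — that the sets obtained from these closures by boolean operations and coordinate projections are again finite unions of such cells, so that $\pmb{\Sa M}$ admits quantifier elimination down to an o-minimal structure — is exactly Wencel's theorem, which I would cite.

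Next I would treat the induced structure. Let $\sW$ be the structure induced on $M$ by $(\pmb{\Sa M}, M)$. Every $\sW$-definable set is $\sM$-definable: by o-minimality and cell decomposition in $\pmb{\Sa M}$ a definable subset of $\overline M^n$ is a finite union of cells, and the trace of such a cell on $M^n$ is $\sM$-definable because the cuts describing it are $\sM$-definable by the construction of $\overline M$; closure under boolean operations and projection finishes this direction. Conversely, every $\sM$-definable $X \subseteq M^n$ is $\sW$-definable: $\mathrm{cl}(X) \subseteq \overline M^n$ is $\pmb{\Sa M}$-definable by the definition of $\pmb{\Sa M}$, and $X$ is recovered from $\mathrm{cl}(X)$, the predicate for $M$, and finitely many lower-dimensional $\sM$-definable faces, handled by induction on $n$; the base case is the observation that an $\sM$-definable convex subset of $M$ is determined by its two endpoints in $\overline M$ together with the information of which of those endpoints lie in $M$. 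Hence $\sW$ and $\sM$ are interdefinable.

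It remains to verify that $(\pmb{\Sa M}, M)$ is a dense pair, i.e. that (A1)--(A3) hold for every $(\pmb{\Sa M}, M) \preceq (\mathscr{N}, K)$. Here $M$ is a proper dense divisible subgroup of the divisible ordered abelian group underlying $\pmb{\Sa M}$ and is codense in $\overline M$; by elementarity the same holds for $K$ inside $\mathscr N$. Conditions (A1)--(A3) — NIP of the pair, the open core of $(\mathscr N, K)$ being $\mathscr N$, and every $(\mathscr N, K)$-definable subset of $K^n$ being the trace of an $\mathscr N$-definable set — are established for the pair $(\pmb{\sM}, M)$ in \cite{BaHa-a}, whose results I would cite; one can also read off (A1) from weak o-minimality of the induced structure $\sW$ together with Proposition~\ref{prop:weak-to-nip}. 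Granting (A3), the computation in the previous paragraph is legitimate and delivers the last clause of the Fact.

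The main obstacle is that the two genuinely deep ingredients — o-minimality of the completion (Wencel) and the open-core and smallness statements for the pair (\cite{BaHa-a}) — each require substantial machinery from scratch: the full strong cell decomposition for weakly o-minimal non-valuational structures, and a quantifier-elimination analysis of the pair. Since the paper cites these, the real work left is to check that the divisibility and non-valuationality hypotheses match the form required by those references and to run the slightly delicate induction of the second step, where one must keep careful track of the distinction between open and closed convex sets using the predicate for $M$.
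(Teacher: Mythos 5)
Your proposal takes essentially the same route as the paper: the paper gives no proof of this Fact beyond attribution, citing \cite{Keren-thesis} for o-minimality of the completion, \cite{BaHa-a} for the dense-pair conditions and the induced-structure claim, and \cite{ChSi-externally} for the $\nip$ part of (A1). Your additional sketches (cell decomposition for o-minimality, the endpoint bookkeeping for interdefinability) are consistent with what those references establish, so the proposal is fine.
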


\noindent
The first claim of Fact~\ref{fact:dense-pair} is shown in \cite{Keren-thesis}.
The other claims follow from \cite{BaHa-a}.
It is not explicitly shown in \cite{BaHa-a} that $(\pmb{\Sa M},M)$ is $\nip$, but this follows immediately from the results of that paper and \cite{ChSi-externally}.

\subsection{The correspondence}
Suppose that $H$ is a dense divisible subgroup of $(\R,+)$.
We describe a canonical correspondence (up to interdefinability) between
\begin{enumerate}
    \item Shelah complete dp-minimal expansions of $(H,+,<)$, and
    \item o-minimal expansions $\Sa R$ of $(\R,+,<)$ such that $(\Sa R,H)$ is a dense pair.
\end{enumerate}
Lemma~\ref{lem:shelah-complete} gives one direction of this correspondence.

\begin{Lem}
\label{lem:shelah-complete}
Suppose that $\Sa R$ is an o-minimal expansion of $(\R,+,<)$ and $(\Sa R,H)$ is a dense pair.
Then the structure $\Sa H$ induced on $H$ by $(\Sa R,H)$ is Shelah complete.
\end{Lem}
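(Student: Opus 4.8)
The plan is to prove the lemma by showing directly that every externally definable subset of every $H^k$ is already $\Sa H$-definable; by the definition of the Shelah expansion this says exactly that $\Sh H$ is interdefinable with $\Sa H$, i.e. that $\Sa H$ is Shelah complete (note $\Sa H$ is $\nip$, indeed dp-minimal, being weakly o-minimal by $(A3)$). So fix an externally definable subset $Z$ of some $H^k$. By the remark that the Shelah expansion does not depend on the choice of saturated extension, I may compute $Z$ inside a convenient model. First I would pass to a $|R|^{+}$-saturated elementary extension $(\Sa R',H')$ of $(\Sa R,H)$ and consider $\Sa H'$, the structure induced on $H'$ by $(\Sa R',H')$. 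One checks that $\Sa H \preceq \Sa H'$ (the induced structure is, up to relativization, a reduct of the pair) and that $\Sa H'$ is $|H|^{+}$-saturated (a small type in $\Sa H'$ lifts, together with the formula $x\in H$, to a type in the pair, which is realized inside $H'$ by saturation). Hence $Z=\{h\in H^k:\Sa H'\models\phi(h,b)\}$ for some $\Sa H$-formula $\phi$ and some $b\in(H')^m$.

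Next I would translate $\phi$ back into the pair. Each basic relation of $\Sa H$ is the trace on a power of $H$ of an $(\Sa R,H)$-definable set, so there is an $(\Sa R,H)$-formula $\Phi(x,y)$ — using the same defining formulas — with $\Sa H'\models\phi(h,c)\iff(\Sa R',H')\models\Phi(h,c)$ for tuples from $H'$. Then $\{(h,c)\in(H')^{k+m}:(\Sa R',H')\models\Phi(h,c)\}$ is an $(\Sa R',H')$-definable subset of a power of $H'$, so by axiom $(A3)$ of the dense pair it equals $(H')^{k+m}\cap\tilde X$ for some $\Sa R'$-definable $\tilde X\subseteq(R')^{k+m}$. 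Fixing the parameter $b\in(H')^m\subseteq(R')^m$, the fibre $X':=\{h\in(R')^k:(h,b)\in\tilde X\}$ is $\Sa R'$-definable, and by the translations above $Z=X'\cap H^k$.

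Finally I would descend to $\Sa R$. Since $\Sa R'\succeq\Sa R$ is the reduct of a $|R|^{+}$-saturated extension, $X'\cap R^k$ is an externally definable subset of $R^k$ in the o-minimal structure $\Sa R$, hence is $\Sa R$-definable by the Marker--Steinhorn theorem (Fact~\ref{fact:ms}): $X'\cap R^k=Y$ for some $\Sa R$-definable $Y\subseteq R^k$. Then $Z=X'\cap H^k=Y\cap H^k$, and $Y\cap H^k$ is the trace on $H^k$ of an $\Sa R$-definable, hence $(\Sa R,H)$-definable, set, so it is by definition one of the basic predicates of the structure induced on $H$ by $(\Sa R,H)$; that is, $Z$ is $\Sa H$-definable.

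The main obstacle is bookkeeping around induced structures: verifying that $\Sa H'$ really is a sufficiently saturated elementary extension of $\Sa H$ (so that it genuinely computes the externally definable sets of $\Sa H$), and tracking which parameters live in $H'$ versus in $R'$. The conceptual content is concentrated in the single use of $(A3)$, which turns a pair-definable set into a trace of an o-minimal-definable set and thereby makes Marker--Steinhorn applicable; the rest is routine once the correspondence between $\Sa H$-formulas and relativized pair-formulas is in place.
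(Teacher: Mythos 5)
Your proof is correct and follows essentially the same route as the paper's: axiom (A3) converts a pair-definable subset of a power of $H'$ into the trace of an $\Sa R'$-definable set, and Marker--Steinhorn (Fact~\ref{fact:ms}) then brings it down to an $\Sa R$-definable set whose trace on $H^k$ is a basic predicate of $\Sa H$. The bookkeeping you carry out --- checking that the structure induced on $H'$ is a sufficiently saturated elementary extension of $\Sa H$, so that externally definable sets of $\Sa H$ really are traces of pair-definable subsets of powers of $H'$ --- is exactly the step the paper leaves implicit.
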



\begin{proof}
Let $(\Sa N,K)$ be a highly saturated elementary extension of $(\Sa R,H)$ and $Y$ be an $(\Sa N,K)$-definable subset of $K^n$.
We show that $H^n \cap Y$ is $(\Sa R,H)$-definable.
By $($A$3)$ there is an $\Sa N$-definable subset $X$ of $N^n$ such that $Y = K^n \cap X$.
By Fact~\ref{fact:ms} $\R^n \cap X$ is $\Sa R$-definable.
So $H^n \cap X = H^n \cap (\R^n \cap X)$ is $(\Sa R,H)$-definable.
\end{proof}

\noindent
We now describe the other direction of the correspondence.
Let $\Sa H$ be a Shelah complete dp-minimal expansion of $(H,+,<)$.
By Corollary~\ref{cor:lex-cor} $\Sa H$ is weakly o-minimal so Fact~\ref{fact:dense-pair} shows that $\pmb{\Sa H}$ is an o-minimal expansion of $(\R,+,<)$ and $(\pmb{\Sa H},H)$ is a dense pair.
By Fact~\ref{fact:dense-pair} the structure induced on $H$ by $(\pmb{\Sa H},H)$ is interdefinable with $\Sa H$.
So we can recover $\Sa H$ up to interdefinibility from $\pmb{\Sa H}$.

\begin{Lem}
\label{lem:canonical}
Suppose that $\Sa R$ is an o-minimal expansion of $(\R,+,<)$ and $(\Sa R,H)$ is a dense pair.
Let $\Sa H$ be the structure induced on $H$ by $(\Sa R,H)$.
Then $\pmb{\Sa H}$ is interdefinable with $\Sa R$.
\end{Lem}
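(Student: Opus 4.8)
The plan is to prove that $\pmb{\Sa H}$ and $\Sa R$ define the same subsets of every $\R^n$, by showing that each is a reduct of the other. To set up, first I note that by (A3) the structure $\Sa H$ induced on $H$ by $(\Sa R,H)$ is weakly o-minimal, and it is non-valuational because $(H,+,<)$, being a subgroup of $(\R,+,<)$, is archimedean; hence Fact~\ref{fact:dense-pair} applies and tells us that $\pmb{\Sa H}$ is an o-minimal expansion of $(\R,+,<)$, that $(\pmb{\Sa H},H)$ is a dense pair, and that the structure induced on $H$ by $(\pmb{\Sa H},H)$ is interdefinable with $\Sa H$. Under the usual identification of $\overline H$ with $\R$, the $\Sa H$-definable subsets of $H^m$ are exactly the sets $H^m\cap X$ with $X\subseteq\R^m$ $\Sa R$-definable (by the definition of the induced structure together with (A3)), so the basic $m$-ary predicates of $\pmb{\Sa H}$ are precisely the sets $\cl_{\R^m}(H^m\cap X)$ for $\Sa R$-definable $X\subseteq\R^m$.

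For the inclusion that $\pmb{\Sa H}$ is a reduct of $\Sa R$, I would fix $\Sa R$-definable $X\subseteq\R^m$ and observe that $H^m\cap X$ is $(\Sa R,H)$-definable; since $(\Sa R,H)$ expands an ordered group, its closure $\cl_{\R^m}(H^m\cap X)$ is again $(\Sa R,H)$-definable, and being closed it lies in the open core $(\Sa R,H)^\circ$. Applying (A2) to the pair $(\Sa R,H)$ itself (the relation $\preceq$ being reflexive) gives that $(\Sa R,H)^\circ$ is interdefinable with $\Sa R$, so $\cl_{\R^m}(H^m\cap X)$ is $\Sa R$-definable. As $\Sa R$-definability is closed under the first-order operations that build $\pmb{\Sa H}$-definable sets from the basic predicates, every $\pmb{\Sa H}$-definable set is $\Sa R$-definable.

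For the reverse inclusion, that $\Sa R$ is a reduct of $\pmb{\Sa H}$, I would argue by induction on the ambient dimension $n$ (and, inside a fixed $n$, on the dimension of the set) that every $\Sa R$-definable $X\subseteq\R^n$ is $\pmb{\Sa H}$-definable, using o-minimal cell decomposition in $\Sa R$ to organize the induction. The load-bearing fact is that if $V\subseteq\R^n$ is \emph{open} and $\Sa R$-definable then $H^n\cap V$ is dense in $V$, so that $\cl_{\R^n}(H^n\cap V)=\cl_{\R^n}(V)$ is a basic $\pmb{\Sa H}$-predicate and $V=\cl_{\R^n}(V)\setminus\boundr V$ with $\boundr V$ of strictly smaller dimension. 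A set of dimension $d$ can then be reduced --- up to coordinate permutations (definable from $+$), finite unions, and $\Sa R$-definable sets of dimension $<d$ --- to the graph of an $\Sa R$-definable continuous function $g$ on an open subset of $\R^d$, and such a graph equals $\cl_{\R^{d+1}}\bigl(H^{d+1}\cap\{(v,t):t<g(v)\}\bigr)\cap\cl_{\R^{d+1}}\bigl(H^{d+1}\cap\{(v,t):t>g(v)\}\bigr)$ up to a lower-dimensional correction, since the open epigraph and hypograph regions meet $H^{d+1}$ densely. The base case is a finite set, where the only nontrivial point is that a singleton $\{a\}$ with $a\notin H$ is recovered as $(-\infty,a]\cap[a,\infty)=\cl_\R\bigl(H\cap(-\infty,a)\bigr)\cap\cl_\R\bigl(H\cap(a,\infty)\bigr)$.

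I expect the reverse inclusion to be the main obstacle, and the crux of the difficulty is precisely that $H^m\cap X$ need not be dense in $X$ when $\dim X<m$ --- for instance a line of irrational slope meets a copy of $\Q^2$ only at the origin --- so $X$ cannot be read off from the single closure $\cl_{\R^m}(H^m\cap X)$. The way around it, sketched above, is to strip off boundaries and reduce to open sets and to epigraphs and hypographs of continuous functions over open bases, where the density of $H$ does apply, while absorbing the lower-dimensional strata into the induction; this is essentially a reprise of standard arguments in the theory of dense pairs.
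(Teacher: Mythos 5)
Your first direction ($\pmb{\Sa H}$ a reduct of $\Sa R$, via closures of traces lying in the open core and (A2)) is correct and is the implicit content of that half of the lemma. For the harder direction you take a genuinely different route from the paper: the paper quotes the fact that every definable set in an o-minimal expansion of an ordered group is a boolean combination of \emph{regular open} definable sets, and then disposes of a regular open $U$ in one line, since $\mathrm{cl}(H^n\cap U)=\mathrm{cl}(U)$ by density and $U=\mathrm{Int}(\mathrm{cl}(U))$ by regularity. Your cell-decomposition induction is essentially an attempt to reprove that quoted lemma from scratch, and that is exactly where your sketch has a gap.

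The problematic step is the claim that the graph of a continuous definable $g$ on an open $V\subseteq\R^d$ equals $\mathrm{cl}(H^{d+1}\cap E)\cap\mathrm{cl}(H^{d+1}\cap F)$ ($E$, $F$ the open epigraph and hypograph) \emph{up to a lower-dimensional correction}. As stated this is false: take $d=1$, $V=(-1,0)\cup(0,1)$, $g\equiv 2$ on $(-1,0)$ and $g\equiv 0$ on $(0,1)$. Then $\mathrm{cl}(E)\cap\mathrm{cl}(F)\supseteq\{0\}\times[0,2]$, which has the same dimension as the graph, so the induction on dimension does not close. Decomposing $V$ into connected cells removes this example, but even then the correction lives over the frontier of $V$ and its fibres are intervals of the form $[\liminf g,\limsup g]$; to see that it has dimension $<d$ you must show that at generic points of each $(d-1)$-dimensional stratum of the frontier the function $g$ has a well-defined limit (equivalently, that $V$ is generically locally definably connected along its frontier, so that an intermediate-value argument puts the correction inside $\mathrm{cl}(\mathrm{Graph}(g))\setminus\mathrm{Graph}(g)$, which does have dimension $<d$). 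That argument is real work and is not supplied; it is precisely the content of the regular-open decomposition lemma that the paper cites. So either supply that argument for connected cells, or replace the induction by a citation of the fact that every $\Sa R$-definable set is a boolean combination of regular open definable sets, after which only the one-line regularity argument is needed.
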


\noindent
Our proof follows that of \cite[Proposition 3.4]{W-shelah-expansions}.
Recall that an open subset of a topological space is \textbf{regular} if it is the interior of its closure.

\begin{proof}
By \cite[Lemma 3.3]{W-shelah-expansions} every $\Sa R$-definable subset of $\R^n$ is a boolean combination of regular open $\Sa R$-definable sets.
So it suffices to suppose that $U$ is a regular open $\Sa R$-definable subset of $\R^n$ and show that $U$ is $\pmb{\Sa H}$-definable.
Note that $H^n \cap U$ is $\Sa H$-definable, so the closure of $H^n \cap U$ in $\R^n$ is $\pmb{\Sa H}$-definable.
Note that the closure of $H^n \cap U$ agrees with the closure of $U$ as $H^n$ is dense.
So by regularity $U$ is the interior of the closure of $H^n \cap U$ in $\R^n$.
So $U$ is definable in $\pmb{\Sa H}$.
\end{proof}

\subsection{Expansions of $(\Q,+,<)$}
We are particularly interested in the case $H = \Q$.

\begin{Ques}\label{ques:omin}
For which o-minimal expansions $\mathscr{R}$ of $(\R,+,<)$ is $(\mathscr{R},\mathbb{Q})$ a dense pair?
\end{Ques}

\noindent 
The classification of o-minimal expansions of $(\R,+,<)$ breaks Question~\ref{ques:omin} into several cases.
Fact~\ref{fact:ps} is a special case of the Peterzil-Starchenko trichotomy theorem \cite{PS-Tri}.
We let $\R_{\mathrm{Vec}}$ be the ordered vector space $(\R,+,<,( x \mapsto \lambda x)_{\lambda \in \R})$.

\begin{Fact}
\label{fact:ps}
Suppose $\sR$ is an o-minimal expansion of $(\R,+,<)$.
Then exactly one of the following holds:
\begin{enumerate}
    \item  $\sR$ is a reduct of $\R_{\mathrm{Vec}}$,
    \item there is a nonempty open interval $I$ and $\sR$-definable $\oplus,\otimes : I^2 \to I$ such that $(I,\oplus,\otimes,<)$ is isomorphic to $(\R,+,\times,<)$.
\end{enumerate}
\end{Fact}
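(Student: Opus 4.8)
The plan is to derive Fact~\ref{fact:ps} from the general local Peterzil--Starchenko trichotomy theorem together with two short arguments specific to the field $\R$. Recall that the local trichotomy classifies each point $a$ of an o-minimal structure as \emph{trivial}, \emph{linear}, or \emph{field-like} (lying in an interval carrying a definable real closed field). Since $\sR$ expands the ordered group $(\R,+,<)$, every translation $x \mapsto x + c$ is a definable order-automorphism, so all points of $\sR$ are of the same kind, and no point is trivial because addition restricted to a neighborhood of any point is strictly monotone in each variable and hence witnesses nontrivial local structure. Hence either every point of $\sR$ is linear, or some point is field-like.

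In the field-like case the trichotomy theorem yields an $\sR$-definable real closed field $(I,\oplus,\otimes,<)$ whose domain $I$ is an open interval of $\R$ and whose order is the one induced from $\R$. I would then check that this field is Dedekind complete: if $A \subseteq I$ is bounded above in the field by some $b \in I$, then $A$ is bounded above by $b$ in $\R$, so $s := \sup A$ exists in $\R$; since $s$ lies between an element of $A$ and $b$ and $I$ is an interval, $s \in I$, and $s$ is the least upper bound of $A$ in $(I,<)$. A Dedekind complete ordered field is isomorphic to $(\R,+,\times,<)$, which gives conclusion $(2)$.

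In the linear case I would invoke the classification of linear o-minimal expansions of ordered groups (Loveys--Peterzil, Peterzil--Starchenko): $\sR$ is then a reduct of an ordered $D$-vector space $(\R,+,<,(\lambda)_{\lambda \in D})$, where $D$ is the ordered division ring of germs of $\sR$-definable additive endomorphisms. Each $\lambda \in D$ is represented by an $\sR$-definable additive function $\R \to \R$; by o-minimality such a function is continuous, and a continuous additive function $\R \to \R$ is multiplication by a real scalar. Hence $D$ embeds as an ordered subfield of $\R$, so the $D$-vector space is a reduct of $\R_{\mathrm{Vec}}$, and therefore so is $\sR$, giving conclusion $(1)$. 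The two conclusions are mutually exclusive: every set definable in $\R_{\mathrm{Vec}}$ is a finite Boolean combination of half-spaces defined by $\R$-linear inequalities, and the graph of multiplication on an interval is not of this form, so no reduct of $\R_{\mathrm{Vec}}$ defines a field on an interval.

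The hard part is that essentially all of the substance here is the trichotomy theorem itself, together with the structure theory of linear o-minimal structures; these are deep external inputs, and reproving them is out of scope. The portions genuinely specific to $\R$---verifying Dedekind completeness of the definable field, and identifying $\sR$-definable scalars with real numbers---are routine once that machinery is in hand.
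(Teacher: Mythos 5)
Your proposal is correct and matches the paper's treatment: the paper gives no proof at all, simply citing this as a special case of the Peterzil--Starchenko trichotomy theorem, and your reduction to that theorem (ruling out trivial points via translations, Dedekind completeness of the definable field in the field-like case, and identifying definable scalars with real numbers in the linear case) is the standard intended argument. The only substance beyond the external inputs is routine, as you note.
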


\noindent
It is shown in \cite{GoHi-Pairs} that $(\R_{\mathrm{Vec}},\mathbb{Q})$ is a dense pair.
So we consider the case when $(2)$ above holds.
Fact~\ref{fact:edmundo} is a special case of a theorem of Edmundo~\cite{ed-str}.

\begin{Fact}
\label{fact:edmundo}
Suppose $\sR$ is an o-minimal expansion of $(\R,+,<)$.
Then exactly one of the following holds:
\begin{enumerate}
\item There is a subfield $K$ of $(\R,+,\times)$ and a collection $\Cal B$ of bounded subsets of Euclidean space such that $(\R,+,<,\Cal B)$ is o-minimal and $\sR$ is interdefinable with $(\R,+,<,\Cal B, (x \mapsto \lambda x)_{\lambda \in K})$.
\item There are $\sR$-definable $\oplus,\otimes : \R^2 \to \R$ such that $(\R,\oplus,\otimes,<)$ is isomorphic to $(\R,+,\times,<)$.
\end{enumerate}
\end{Fact}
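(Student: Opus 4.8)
This is the specialization to the real group $(\R,+,<)$ of Edmundo's structure theorem for o-minimal expansions of ordered groups \cite{ed-str}, so the plan is to quote that theorem and describe what its output becomes over $\R$. Edmundo's dichotomy sorts the o-minimal expansions $\sR$ of an ordered group into the \emph{semi-bounded} ones --- those defining no real closed field on an unbounded set --- and those defining a real closed field whose domain is the whole group; these two cases are mutually exclusive and exhaustive. Mapping this onto the two clauses of the statement, clause (2) will be the non-semi-bounded case and clause (1) the semi-bounded case, and the main work is to check that, over $\R$, the abstract data produced by \cite{ed-str} can be put in exactly the concrete forms asked for.

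Consider first the non-semi-bounded case. Here \cite{ed-str} provides an $\sR$-definable real closed field with domain $\R$. Its field order is an $\sR$-definable linear order on $\R$, so by o-minimality it agrees with $<$ or with its reverse outside a finite set; being dense, complete, and without endpoints, it must equal $<$ up to reversal, and after adjusting the multiplication accordingly we obtain an $\sR$-definable ordered real closed field $(\R,\oplus,\otimes,<)$ on the line with its given order. Since $(\R,<)$ is Dedekind complete so is this field, and a Dedekind-complete ordered field is isomorphic to $(\R,+,\times,<)$; this yields the operations $\oplus,\otimes\colon\R^2\to\R$ of clause (2).

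In the semi-bounded case \cite{ed-str} shows that $\sR$ is interdefinable with the expansion of its linear reduct by the family $\Cal B$ of all bounded $\sR$-definable subsets of Euclidean space, and $(\R,+,<,\Cal B)$ is o-minimal as a reduct of $\sR$. It remains to identify the linear reduct over $\R$. Every $\sR$-definable endomorphism of $(\R,+)$ is additive, and a definable function $\R\to\R$ is continuous off a finite set, so such an endomorphism is continuous, hence of the form $x\mapsto\lambda x$ for a unique $\lambda\in\R$. Writing $K$ for the set of such $\lambda$, closure of the definable endomorphisms under pointwise sum, composition, negation and functional inverse makes $K$ a subfield of $(\R,+,\times)$, and the linear reduct of $\sR$ is interdefinable with $(\R,+,<,(x\mapsto\lambda x)_{\lambda\in K})$. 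Hence $\sR$ is interdefinable with $(\R,+,<,\Cal B,(x\mapsto\lambda x)_{\lambda\in K})$, which is clause (1). Mutual exclusivity of (1) and (2) is immediate: a structure of the form in clause (1) is semi-bounded, hence does not define a real closed field on all of $\R$.

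The main obstacle is purely one of correctly citing \cite{ed-str}: one must extract from it both the semi-bounded/real-closed-field dichotomy in the form used above and, in the semi-bounded case, the statement that the linear reduct together with the bounded definable sets suffices to recover $\sR$. Everything after that --- the identification of the scalar field $K$, the Dedekind-completeness argument identifying the defined field with $(\R,+,\times,<)$, and the exclusivity of the two clauses --- is routine.
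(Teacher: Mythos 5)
The paper offers no proof of this Fact beyond the single remark that it ``is a special case of a theorem of Edmundo~\cite{ed-str}'', and your proposal takes exactly that route: quote Edmundo's semi-bounded/field dichotomy and specialize it to $(\R,+,<)$. Your added details (identifying the defined real closed field with $(\R,+,\times,<)$ via Dedekind completeness, and identifying the scalar field $K$ with the definable endomorphisms of $(\R,+)$) are correct, so this is the same approach, just spelled out more fully than the paper bothers to.
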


\subsection{Global field structure}
We discuss case $(2)$ of Fact~\ref{fact:edmundo}.
Let $\iota$ be the unique isomorphism $(\R,\oplus,\otimes,<) \to (\R,+,\times,<)$, $\sR'$ be the pushforward of $\sR$ by $\iota$, $\boxplus$ be the pushforward of $+$ by $\iota$, and $Q := \iota(\Q)$.
So $\sR'$ is an expansion of $(\R,+,\times)$ and $\iota$ gives an isomorphism $(\sR,\Q) \to (\sR',Q)$.
\medskip

\noindent
The \textit{o-minimal two group question} asks if there is necessarily either an $\sR'$-definable isomorphism $(\R,\boxplus,<) \to (\R,+,<)$ or $(\R,\boxplus,<) \to (\R_{>0},\times)$.
Thisquestion is open, but it holds for all known o-minimal expansions of $(\R,+,\times)$, see \cite{two-group}.
\medskip

\noindent
Suppose there is an $\sR'$-definable isomorphism $\tau : (\R,\boxplus,<) \to (\R,+,<)$.
Then there is $t \in \R_{>0}$ such that $(\tau \circ \iota)(a) = ta$ for all $a \in \R$, hence $\tau(Q) = t\Q$.
It follows that $(\sR',Q)$ defines $\Q$ and is therefore totally wild.
\medskip

\noindent
Suppose there is an $\sR'$-definable isomorphism $\tau : (\R,\boxplus,<) \to (\R_{>0},\times)$.
Then there is $t \in \R_{>0}$ such that $(\tau \circ \iota)(a)= t^a$ for all $a \in \R$, hence $\tau(Q) = t^\Q := \{t^q : q \in \Q \}$.
This leads to Question~\ref{ques:powers-weak}.

\begin{Ques}
\label{ques:powers-weak}
Fix $t \in \R_{>0}$.
For which o-minimal expansions $\mathscr{S}$ of $(\R,+,\times)$ is $(\mathscr{S},t^\Q)$ a dense pair? 
\end{Ques}

\noindent
It is known that $(\R,+,\times,t^\Q)$ is a dense pair~\cite{GuHi-Dependent}.
Hieronymi has shown that there is a co-countable subset $\Lambda$ of $\R \setminus \Q$ such that if $r \in \Lambda$ and $t \in \R_{>0}$ lies in the algebraic closure of $\Q(r)$ then $(\R,+,\times,x^r, t^\Q)$ is a dense pair~\cite{H-tau}.
\medskip

\noindent
If $\mathscr{S}$ defines the exponential function then $(\mathscr{S}, t^\Q)$ defines $\Q$ and is therefore totally wild so by Miller's dichotomy~\cite{miller-avoid} if $(\mathscr{S}, t^\Q)$ is a dense pair then $\mathscr{S}$ is polynomially bounded.
It follows from \cite[Proposition 1]{tychon-convex} that if $\mathscr{S}$ defines the restriction of the exponential to some nonempty open interval than $(\mathscr{S},t^\Q)$ defines $\Q$.
So for example $(\R_{\mathrm{an}}, t^\Q)$ is totally wild.
\medskip

\noindent
Let $I$ be a closed bounded interval and $C^\infty(I)$ be the space of smooth functions $I \to \R$ equipped with the topology induced by the seminorms $f \mapsto \max 
\{ f^{(n)}(r) : r \in I \}$.
So $C^\infty(I)$ is Polish.
Le Gal has shown that there is a comeager subset $\Gamma$ of $C^\infty(I)$ such that if $f \in \Gamma$ the $(\R,+,\times,f)$ is o-minimal and polynomially bounded~\cite{LGal}.
Fix $t \in \R_{>0}$.
There should be a comeager subset $\Gamma_t$ of $C^\infty(I)$ such that if $f \in \Gamma_t$ then $(\R,+,\times,f)$ is o-minimal and $(\R,+,\times,f,t^\Q)$ is a dense pair.

\subsection{Local field structure}
We now consider the case when there is a local field structure but no global field structure, i.e. when item $(1)$ of Fact~\ref{fact:ps} and item $(2)$ of Fact~\ref{fact:edmundo} both fail.
In this case the field structure is on a bounded interval $I$ and there is no definable injection $\R \to I$, so only bounded segments of $\Q$ show up inside the field structure.
Suppose $I$ is a bounded open interval, $\oplus,\otimes : I^2 \to I$ are $\sR$-definable, and $\iota : (\R,+,\times,<) \to (I,\oplus,\otimes,<)$ is an isomorphism.
Let $\sR'$ be pullback by $\iota$ of the structure induced on $I$ by $\mathscr{R}$.
So $\sR'$ is an o-minimal expansion of $(\R,+,\times)$.
After rescaling and translating if necessary we suppose $[0,1]$ is contained in $I$.
Let
\begin{equation*}
    s \tilde{+} t = \begin{cases}
    s + t & s + t < 1 \\
    s + t - 1 & \text{otherwise.}\\
    \end{cases}
    \quad \text{for all} \quad 0 \leq s,t < 1.
\end{equation*}
\noindent 
Let $\mathbb{S}$ be the pullback of $([0,1),\tilde{+})$ by $\iota$, so $\mathbb{S}$ is an $\sR'$-definable group.
Note $\mathbb{S}$ is isomorphic to $(\R/\mathbb{Z},+)$ and $q \in [0,1)$ is rational if and only if $q$ is a torsion point of $\mathbb{S}$.
So if $(\Sa R,\Q)$ is a dense pair then the expansion of $\Sa R'$ by the torsion points of $\mathbb{S}$ is a dense pair.
This leads us to Question~\ref{ques:torsion}.

\begin{Ques}\label{ques:torsion}
If the expansion of $\Sa R'$ by the torsion points of $\mathbb{S}$ is a dense pair then must $(\Sa R,\Q)$ be a dense pair?
For which o-minimal expansions $\mathscr{S}$ of $(\R,+,\times)$ and compact, connected, one-dimensional $\mathscr{S}$-definable groups $\mathbf{T}$ is the expansion of $\mathscr{S}$ by a predicate defining the torsion points of $\mathbf{T}$ a dense pair?
\end{Ques}

\noindent
If the first part of Question~\ref{ques:torsion} has a positive answer then Question~\ref{ques:omin} largely reduces to Question~\ref{ques:powers-weak} and the second part of Question~\ref{ques:torsion}.
\medskip

\noindent 
Question~\ref{ques:torsion} is closely related to the following question of Peterzil~\cite[11.2]{Peterzil-survey}: Suppose $\mathscr{R}$ is an o-minimal expansion of a real closed field and $G$ is a definably compact definable group, what is the induced structure on the torsion points of $G$?
\medskip

\noindent 
Consider the case when $\mathbf{T}$ is the unit circle equipped with complex multiplication.
Then $T$ is the set of roots of unity.
The expansion of $(\R,+,\times)$ by a predicate defining the set $T$ of roots of unity is a dense pair.
This again follows by combining general results on preservation of $\mathrm{NIP}$ from \cite{ChSi-externally} with specific tameness results for this structure given in \cite{BeZi-The}.
The results of \cite{BeZi-The} rely on the ``Lang property" for roots of unity: if $V$ is a subvariety of $\mathbb{C}^n$ then $V \cap U^n$ is a boolean combination of sets of the form
$$ \{ (g_1,\ldots,g_n) \in U^n : g_1^{m_1} \ldots g_n^{m_n} = h \} $$
for integers $m_1,\ldots,m_n$ and $h \in \mathbb{C}^\times$.
Other positive instances of Question~\ref{ques:torsion} may depend on other instances of the ``Lang property".

\subsection{The $p$-adic case}
\label{section:p adic completion}
We finally give a partial $p$-adic analogue of the results on dp-minimal expansions of divisible archimedean ordered abelian groups given above.
We continue where we left off at the end of Section~\ref{section:discretely} and continue to work in the set up of the proof of Theorem~\ref{thm:padic-arch}.
Let $\K$ be a finite extension of $\Q_p$, $K$ be an elementary subfield of $\K$, $\Sa K$ be a dp-minimal expansion of $K$, $\Sa F$ be a highly saturated elementary extension of $\Sa K$ such that $\K$ is an elementary subfield of the underlying field $F$ of $\Sa F$, and $\pmb{\Sa K}$ be the associated expansion of $\K$ defined as above.

\begin{Prop}
\label{prop:p adic completion}
The structure induced on $K$ by $\pmb{\Sa K}$ is interdefinable with $\Sh K$.
\end{Prop}

\noindent
Thus any Shelah complete dp-minimal expansion of $K$ is the structure induced on $K$ by some dp-minimal expansion of $\K$.
We first recall the necessary facts.
Fact~\ref{fact:external} is immediate from the definitions and left to the reader.

\begin{Fact}
\label{fact:external}
Suppose that $\Sa M \prec \Sa N$ are structures and $X \subseteq N^n$ is externally definable in $\Sa N$.
Then $X \cap M^n$ is externally definable in $\Sa M$.
\end{Fact}

\noindent
Fact~\ref{fact:external 1} is one way of phrasing existence of honest definitions, see \cite{ChSi-externally}.

\begin{Fact}
\label{fact:external 1}
Suppose that $\Sa M$ is $\nip$, $\Sa M \prec \Sa N$ is sufficiently saturated, and $X \subseteq M^n$ is externally definable in $\Sa M$.
Then there is a $\Sa N$-definable $D \subseteq N^n$ such that $X = D \cap M^n$ and $D \cap Z^* = \emptyset$ whenever $Z$ is an $\Sa M$-definable subset of $M^n$ such that $X \cap Z = \emptyset$ and $Z^*$ is the subset of $N^n$ defined by the same formula as $Z$.
\end{Fact}

\noindent
Fact~\ref{fact:def boolean} is a special case of the results of \cite{SW-tametop}.

\begin{Fact}
\label{fact:def boolean}
Suppose that $(E,v)$ is a valued field and $\Sa E$ is a dp-minimal expansion of $(E,v)$.
Then every $\Sa E$-definable subset of $E^n$ is a finite boolean combination of closed $\Sa E$-definable subsets of $E^n$.
\end{Fact}

\noindent
We now prove Proposition~\ref{prop:p adic completion}.
We let $\st : W^n \to \K^n$ be the coordinate-wise standard part map given by $\st(x_1,\ldots,x_n) = (\st(x_1),\ldots,\st(x_n))$.

\begin{proof}
We first show that the structure induced on $K$ by $\pmb{\Sa K}$ is a reduct of $\Sh K$.
It suffices to assume that $Y \subseteq \K^n$ is $\pmb{\Sa K}$-definable and show that $X = Y \cap K^n$ is $\Sh K$-definable.
By dp-minimality of $\pmb{\Sa K}$ and Fact~\ref{fact:def boolean} we may suppose that $Y$ is closed.
Let $Y^* = \st^{-1}(Y)$.
As $Y$ is closed we have $X = Y^* \cap K^n$.
Then $Y^*$ is definable in $\Sh F$, hence externally definable in $\Sa F$.
Hence by Fact~\ref{fact:external} $Y$ is externally definable in $\Sa K$.
\medskip

\noindent
We now suppose that $X \subseteq K^n$ is definable in $\Sh K$ and show that $X$ is definable in the structure induced on $K$ by $\pmb{\Sa K}$.
By dp-minimality of $\Sh K$ and Fact~\ref{fact:external} we may suppose that $X$ is closed in $K^n$.
Let $X'$ be the closure of $X$ in $\K^n$.
As $X$ is closed we have $X = X' \cap K^n$, so it is enough to show that $X'$ is definable in $\pmb{\Sa K}$.
We let $D$ be the $\Sa F$-definable set provided by Fact~\ref{fact:external 1}.
We show that $\st(D \cap W^n) = X'$.
An easy saturation argument shows that $\st(D \cap W^n)$ is closed, and $X$ is contained in $D \cap W^n$, so $X'$ is contained in $\st(D \cap W^n)$.
We show that $\st(D \cap W^n)$ is contained in $X'$.
Fix $p \notin X'$.
As $X'$ is closed there is a definable open neighbourhood $U$ of $p$ such that $U \cap X' = \emptyset$.
We may suppose that $U = U_1 \times \ldots \times U_n$ for balls $U_1,\ldots,U_n$ in $\K$.
Let $U^*$ be the subset of $F^n$ defined by the same formula as $U$.
As $U$ is clopen we have $U^* = \st^{-1}(U)$.
By Fact~\ref{fact:external 1} $D$ is disjoint from $U^*$.
Hence $\st(D \cap W^n)$ does not intersect $U$, so $p \notin \st(D \cap W^n)$.
\end{proof}

\bibliographystyle{abbrv}
\bibliography{Ref}

\begin{thebibliography}{10}

\bibitem{AldE}
E.~Alouf and C.~d'Elb\'{e}e.
\newblock A new dp-minimal expansion of the integers.
\newblock {\em J. Symb. Log.}, 84(2):632--663, 2019.

\bibitem{ADHMS}
M.~Aschenbrenner, A.~Dolich, D.~Haskell, D.~Macpherson, and S.~Starchenko.
\newblock Vapnik-{C}hervonenkis density in some theories without the
  independence property, {I}.
\newblock {\em Trans. Amer. Math. Soc.}, 368(8):5889--5949, 2016.

\bibitem{BaHa-a}
E.~Bar-Yehuda, A.~Hasson, and Y.~Peterzil.
\newblock A theory of pairs for non-valuational structures.
\newblock {\em J. Symb. Log.}, 84(2):664--683, 2019.

\bibitem{belair-def}
L.~B\'{e}lair.
\newblock Substructures and uniform elimination for {$p$}-adic fields.
\newblock {\em Ann. Pure Appl. Logic}, 39(1):1--17, 1988.

\bibitem{Belegradek}
O.~Belegradek.
\newblock Poly-regular ordered abelian groups.
\newblock In {\em Logic and algebra}, volume 302 of {\em Contemp. Math.}, pages
  101--111. Amer. Math. Soc., Providence, RI, 2002.

\bibitem{BeZi-The}
O.~Belegradek and B.~Zilber.
\newblock The model theory of the field of reals with a subgroup of the unit
  circle.
\newblock {\em J. Lond. Math. Soc. (2)}, 78(3):563--579, 2008.

\bibitem{BV-independent}
A.~Berenstein and E.~Vassiliev.
\newblock Geometric structures with a dense independent subset.
\newblock {\em Selecta Math. (N.S.)}, 22(1):191--225, 2016.

\bibitem{GoHi-Pairs}
A.~Block~Gorman, P.~Hieronymi, and E.~Kaplan.
\newblock Pairs of theories satisfying a {M}ordell-{L}ang condition.
\newblock {\em Fund. Math.}, 251(2):131--160, 2020.

\bibitem{ChSi-externally}
A.~Chernikov and P.~Simon.
\newblock Externally definable sets and dependent pairs.
\newblock {\em Israel J. Math.}, 194(1):409--425, 2013.

\bibitem{Cluckers}
R.~Cluckers.
\newblock Presburger sets and {$p$}-minimal fields.
\newblock {\em J. Symbolic Logic}, 68(1):153--162, 2003.

\bibitem{ConantPillay}
G.~Conant and A.~Pillay.
\newblock Stable groups and expansions of {$(\Bbb Z,+,0)$}.
\newblock {\em Fund. Math.}, 242(3):267--279, 2018.

\bibitem{Delon-def}
F.~Delon.
\newblock D\'{e}finissabilit\'{e} avec param\`etres ext\'{e}rieurs dans {${\bf
  Q}_p$} et {${\bf R}$}.
\newblock {\em Proc. Amer. Math. Soc.}, 106(1):193--198, 1989.

\bibitem{DGl-dp}
A.~Dolich, J.~Goodrick, and D.~Lippel.
\newblock Dp-minimality: basic facts and examples.
\newblock {\em Notre Dame J. Form. Log.}, 52(3):267--288, 2011.

\bibitem{ed-str}
M.~J. Edmundo.
\newblock Structure theorems for o-minimal expansions of groups.
\newblock {\em Ann. Pure Appl. Logic}, 102(1-2):159--181, 2000.

\bibitem{EP-value}
A.~J. Engler and A.~Prestel.
\newblock {\em Valued fields}.
\newblock Springer Monographs in Mathematics. Springer-Verlag, Berlin, 2005.

\bibitem{goodrick}
J.~Goodrick.
\newblock A monotonicity theorem for {DP}-minimal densely ordered groups.
\newblock {\em J. Symbolic Logic}, 75(1):221--238, 2010.

\bibitem{GuHi-Dependent}
A.~G\"{u}naydin and P.~Hieronymi.
\newblock Dependent pairs.
\newblock {\em J. Symbolic Logic}, 76(2):377--390, 2011.

\bibitem{HaMa-Cell}
D.~Haskell and D.~Macpherson.
\newblock Cell decompositions of {${\rm C}$}-minimal structures.
\newblock {\em Ann. Pure Appl. Logic}, 66(2):113--162, 1994.

\bibitem{HaMa-A}
D.~Haskell and D.~Macpherson.
\newblock A version of o-minimality for the {$p$}-adics.
\newblock {\em J. Symbolic Logic}, 62(4):1075--1092, 1997.

\bibitem{H-tau}
P.~Hieronymi.
\newblock The real field with an irrational power function and a dense
  multiplicative subgroup.
\newblock {\em J. Lond. Math. Soc. (2)}, 83(1):153--167, 2011.

\bibitem{JSW}
F.~Jahnke, P.~Simon, and E.~Walsberg.
\newblock Dp-minimal valued fields.
\newblock {\em J. Symb. Log.}, 82(1):151--165, 2017.

\bibitem{Johnson}
W.~Johnson.
\newblock On dp-minimal fields.
\newblock {\em arXiv:1507.02745}, 2015.

\bibitem{Jo-canonical}
W.~Johnson.
\newblock The canonical topology on dp-minimal fields.
\newblock {\em J. Math. Log.}, 18(2):1850007, 23, 2018.

\bibitem{1910.05932}
W.~Johnson.
\newblock Dp-finite fields ii: the canonical topology and its relation to
  henselianity, 2019, arXiv:1910.05932.

\bibitem{2004.14732}
W.~Johnson.
\newblock Dp-finite fields {V}: topological fields of finite weight, 2020,
  arXiv:2004.14732.

\bibitem{Keren-thesis}
G.~Keren.
\newblock Definable compactness in weakly o-minimal structures.
\newblock Master's thesis, Ben Gurion University of the Negev, 2014.

\bibitem{LasStein}
M.~C. Laskowski and C.~Steinhorn.
\newblock On o-minimal expansions of {A}rchimedean ordered groups.
\newblock {\em J. Symbolic Logic}, 60(3):817--831, 1995.

\bibitem{LGal}
O.~Le~Gal.
\newblock A generic condition implying o-minimality for restricted
  {$C^\infty$}-functions.
\newblock {\em Ann. Fac. Sci. Toulouse Math. (6)}, 19(3-4):479--492, 2010.

\bibitem{MaMA-weakly}
D.~Macpherson, D.~Marker, and C.~Steinhorn.
\newblock Weakly o-minimal structures and real closed fields.
\newblock {\em Trans. Amer. Math. Soc.}, 352(12):5435--5483, 2000.

\bibitem{MaSt-definable}
D.~Marker and C.~I. Steinhorn.
\newblock Definable types in o-minimal theories.
\newblock {\em J. Symbolic Logic}, 59(1):185--198, 1994.

\bibitem{MiVi-Presburger}
C.~Michaux and R.~Villemaire.
\newblock Presburger arithmetic and recognizability of sets of natural numbers
  by automata: new proofs of {C}obham's and {S}emenov's theorems.
\newblock {\em Ann. Pure Appl. Logic}, 77(3):251--277, 1996.

\bibitem{miller-avoid}
C.~Miller.
\newblock Exponentiation is hard to avoid.
\newblock {\em Proc. Amer. Math. Soc.}, 122(1):257--259, 1994.

\bibitem{two-group}
C.~Miller and S.~Starchenko.
\newblock Status of the o-minimal two group question, 2003, available at
  https://people.math.osu.edu/miller.1987/tg.pdf.

\bibitem{Peterzil-survey}
Y.~Peterzil.
\newblock Pillay's conjecture and its solution---a survey.
\newblock In {\em Logic {C}olloquium 2007}, volume~35 of {\em Lect. Notes
  Log.}, pages 177--203. Assoc. Symbol. Logic, La Jolla, CA, 2010.

\bibitem{PS-Tri}
Y.~Peterzil and S.~Starchenko.
\newblock A trichotomy theorem for o-minimal structures.
\newblock {\em Proc. London Math. Soc. (3)}, 77(3):481--523, 1998.

\bibitem{point-wagner}
F.~Point and F.~O. Wagner.
\newblock Essentially periodic ordered groups.
\newblock {\em Ann. Pure Appl. Logic}, 105(1-3):261--291, 2000.

\bibitem{prestel-roquette}
A.~Prestel and P.~Roquette.
\newblock {\em Formally {$p$}-adic fields}, volume 1050 of {\em Lecture Notes
  in Mathematics}.
\newblock Springer-Verlag, Berlin, 1984.

\bibitem{Ro-Za}
A.~Robinson and E.~Zakon.
\newblock Elementary properties of ordered abelian groups.
\newblock {\em Trans. Amer. Math. Soc.}, 96:222--236, 1960.

\bibitem{Shelah-strongly}
S.~Shelah.
\newblock Strongly dependent theories.
\newblock {\em Israel J. Math.}, 204(1):1--83, 2014.

\bibitem{Simon-dp}
P.~Simon.
\newblock On dp-minimal ordered structures.
\newblock {\em J. Symbolic Logic}, 76(2):448--460, 2011.

\bibitem{Simon-Book}
P.~Simon.
\newblock {\em A guide to NIP theories}, volume~44 of {\em Lecture Notes in
  Logic}.
\newblock Cambridge University Press, 2015.

\bibitem{SW-tametop}
P.~Simon and E.~Walsberg.
\newblock Tame topology over dp-minimal structures.
\newblock {\em Notre Dame J. Form. Log.}, 60(1):61--76, 2019.

\bibitem{Sw-cyclic}
S.~\'{S}wierczkowski.
\newblock On cyclically ordered groups.
\newblock {\em Fund. Math.}, 47:161--166, 1959.

\bibitem{cyclic-orders}
M.~C. Tran and E.~Walsberg.
\newblock A family of dp-minimal expansions of $(\mathbb{Z};+)$, 2017.

\bibitem{tychon-convex}
M.~A. Tychonievich.
\newblock Defining additive subgroups of the reals from convex subsets.
\newblock {\em Proc. Amer. Math. Soc.}, 137(10):3473--3476, 2009.

\bibitem{Wagner-Bad}
F.~O. Wagner.
\newblock Bad fields in positive characteristic.
\newblock {\em Bull. London Math. Soc.}, 35(4):499--502, 2003.

\bibitem{W-shelah-expansions}
E.~Walsberg.
\newblock An $\mathrm{NIP}$ structure which does not interpret an infinite
  group but whose shelah expansion interprets an infinite field.
\newblock {\em arXiv:1910.13504}, 2019.

\bibitem{Wencel1}
R.~Wencel.
\newblock Weakly o-minimal nonvaluational structures.
\newblock {\em Ann. Pure Appl. Logic}, 154(3):139--162, 2008.

\bibitem{Wencel}
R.~Wencel.
\newblock On the strong cell decomposition property for weakly o-minimal
  structures.
\newblock {\em MLQ Math. Log. Q.}, 59(6):452--470, 2013.

\bibitem{Wilkie-fusion}
A.~J. Wilkie.
\newblock Fusing o-minimal structures.
\newblock {\em J. Symbolic Logic}, 70(1):271--281, 2005.

\end{thebibliography}
\end{document}